\pgfplotsset{compat=1.15}
\newtheorem{thm}{Theorem}[section]
\newtheorem{lem}[thm]{Lemma}
\newtheorem{construction}[thm]{Construction}
\newtheorem{corollary}[thm]{Corollary}
\newtheorem{thm-con}[thm]{Theorem-Conjecture}
\numberwithin{equation}{section}
\theoremstyle{definition}
\newtheorem{defn}[thm]{Definition}
\newtheorem{rmk}[thm]{Remark}
\newtheorem{exmp}[thm]{Example}
\newtheorem{set}[thm]{Set-up}
\DeclareMathOperator{\del}{del}
\DeclareMathOperator{\link}{link}
\DeclareMathOperator{\reg}{reg}
\newcommand{\x}{{\tt X}}
\newcommand{\s}{\mathcal{S}}
\newcommand{\h}{\mathcal{H}}
\newcommand{\p}{\mathcal{P}}
\newcommand{\e}{\mathcal{E}}
\newcommand{\Cla}{\mathcal{C}}
\newcommand{\al}{\alpha}
\newcommand{\ep}{\epsilon}
\newcommand{\et}{\eta}
\newcommand{\de}{\delta}
\begin{document}
	
\title[Vertex cover ideals of simplicial complexes]{Vertex cover ideals of simplicial complexes}
\author{Bijender}
\email{2019rma0011@iitjammu.ac.in}
\address{Department of Mathematics, Indian Institute of Technology Jammu, J\&K, India - 181221.}
\author[Ajay Kumar]{Ajay Kumar}
\email{ajay.kumar@iitjammu.ac.in}
\address{Department of Mathematics, Indian Institute of Technology Jammu, J\&K, India - 181221.}
\date{\today}

\subjclass[2020]{Primary 05E40, 13C14, 13D02}

\keywords{Componentwise linear, vertex decomposable, vertex cover ideal, symbolic powers}

\maketitle
\begin{abstract}
Given a simplicial complex $\Delta$, we investigate how to construct a new simplicial complex $\bar{\Delta}$ such that the corresponding monomial ideals satisfy nice algebraic properties. We give a procedure to check the vertex decomposability of an arbitrary hypergraph. As a consequence,  we prove that attaching non-pure skeletons at all vertices of a cycle cover of a simplicial complex $\Delta$ results in a  simplicial complex $\bar{\Delta}$ such that the associated hypergraph $\h(\bar{\Delta})$ is vertex decomposable. Also, we prove that all symbolic powers of the cover ideal of $\bar{\Delta}$ are componentwise linear. Our work generalizes the earlier known result where non-pure complete graphs were added to all vertices of a cycle cover of a graph.
\end{abstract}

\section{Introduction}
The interplay between commutative algebra and combinatorics has proven to be effective in solving various difficult problems in both areas. In combinatorial commutative algebra, we often deal with squarefree monomial ideals. A squarefree monomial ideal can be viewed as the edge ideal of a simple hypergraph. Let $\mathcal{H}=(V, \mathcal{E})$ be a simple hypergraph with the vertex set $V$ and the edge set $\mathcal{E}$. To a simple hypergraph $\h$, we can associate a squarefree monomial ideal 
$ I(\h)=\langle  \prod\limits_{x \in F}x: F\in \e \rangle \subset R = \mathbb{K}[x_1,\dots,x_n] \rangle$, called the {\it edge ideal} of $\h$. The {\it vertex cover ideal} of $\h$ is defined as $J(\h) = \langle x_{j_1} \cdots x_{j_q}:\{x_{j_1},\dots,x_{j_q}\} ~\mbox{is a minimal vertex cover of}~ \h \rangle.$ It is known that $J(\h)=I(\h)^{\vee}$, where $I(\h)^{\vee}$ denotes the Alexander dual of $I(\h).$ One can also view the edge ideal $I(\h)$ as the facet ideal of a simplicial complex $\Delta$. Let $\Delta$ be a simplicial complex on the vertex set $V=\{x_1,\ldots,x_n\}$ and $\mathcal{F}(\Delta)$ be the set of all facets of $\Delta$. We can associate a squarefree monomial ideal to $\Delta$ given by $I(\Delta)=\langle \prod\limits_{x \in F}x: F \in  \mathcal{F}(\Delta)\rangle$, called the {\it facet ideal} of $\Delta.$ Consider the hypergraph $\mathcal{H}(\Delta)=(V,\mathcal{F}(\Delta))$. Note that $I(\Delta)=I(\h(\Delta)).$ The {\it vertex cover ideal} of $\Delta$ is defined as $J(\Delta)=J(\h(\Delta))=I(\h(\Delta))^{\vee}.$

In this paper, we examine how to modify a simplicial complex $\Delta$ to obtain a new simplicial complex $\bar{\Delta}$ so that $J(\bar{\Delta})$ satisfies nice algebraic properties. Our work is inspired the work of Villareal \cite{vill_cohen}. For any subset $S$ of vertices of a graph $G=(V_G,E_G)$, the graph obtained by attaching a whisker at each vertex of $S$ is denoted by $G \cup W(S)$. Villareal \cite{vill_cohen} showed that $G \cup W(V_G)$ is Cohen-Macaulay or, equivalently, the cover ideal $J(G \cup W(V_G))$ ha a linear resolution. Later in \cite{DAEA,Wood2009}, it was shown that $G \cup W(V_G)$ is vertex decomposable and shellable. In general, for any hypergraph following implications are known:
$$\text{vertex decomposable} \Rightarrow \text{shellable} \Rightarrow \text{sequentially Cohen-Macaulay}.$$
The concept of whiskering and partial whiskering of simplicial complexes has been studied by various authors (see \cite{BJCV,BJVA,CDU,FCH}). 
Authors in \cite{BJVA,FCH} gave some necessary and sufficient conditions on a subset $S$ of the vertex set of a simplicial complex $\Delta$ so that adding a whisker at each vertex of $S$ results in a vertex decomposable or sequentially Cohen-Macaulay simplicial complex. The concept of Alexander duality helps us to link the sequentially Cohen-Macaulay property of a squarefree monomial ideal with componentwise linear property of the dual. Recall that a graded ideal is called {\it componentwise linear,} if for all $j \in \mathbb{N}$, $I_{<j>}$ has a linear resolution, where $I_{<j>}$ denotes the ideal generated by all homogeneous elements of degree $j$ in $I$. For a squarefree monomial ideal $I$, Herzog and Hibi \cite{HHibi} found the following criterion for $I^{\vee}$ to be componentwise linear: $R/I$ is sequentially Cohen-Macaulay over $\mathbb{K}$ if and only if $I^{\vee}$ is a componentwise linear ideal. In particular, this result shows that if a hypergraph $\h$ is vertex decomposable, then $I(\h)^{\vee}$ is a componentwise linear ideal. In \cite{DXTH}, it  has been shown all symbolic powers of the cover ideal of $G \cup W(V_G)$ are componentwise linear. If $S$ is a vertex cover of a graph $G$, then Selvaraja in \cite{2020Selvaraja} proved that all symbolic powers of the cover ideal of $G \cup W(S)$ are componentwise linear. R$\rm{\ddot{o}}$mer in \cite{Tim} proved that a graded $R$-module $M$ is Koszul if and only if $M$ is componentwise
linear. Koszul modules were  introduced Herzog and Iyengar in \cite{HIS}. Koszul modules have significant applications in areas of commutative algebra and algebraic geometry. Authors in \cite{GHSJ} proved that attaching whiskers at all vertices of a cycle cover of $G$ results in a new graph for which all symbolic powers of cover ideals are componentwise linear or equivalently, Koszul. It is natural to ask whether we can extend these results to more general objects. In this paper, we introduce the notion of a cycle cover (see Definition ~\ref{cycle}) of a simplicial complex $\Delta$. We attach non-pure skeletons (see Definition ~\ref{def1}) at each vertex of a cycle cover of a simplicial complex $\Delta$ and obtain a  simplicial complex $\bar{\Delta}$. We generalize the construction given in \cite{KumarAR,Fakhari} to obtain a hypergraph $\h(\ell_1,\ldots,\ell_t)$ from $\h=\h(\bar{\Delta})$ by duplicating the edges in $\h$, where $t$ is the number of edges in $\h$ and $(\ell_1,\ldots,\ell_t)\in \mathbb{N}^t.$ We prove that $\h(\ell,\ldots,\ell)$ is a vertex decomposable graph. Our first main result is stated as follows.
\\\\
{\bf Theorem A} (see Theorem ~\ref{Th-MainResult}).
Let $\Delta$ be a simplicial complex and $W$ be a cycle cover of $\Delta$. Let $\bar{\Delta}$ be the simplicial complex obtained from $\Delta$ by attaching non-pure skeletons at all vertices of $W$.  Then $J(\bar{\Delta})^{(\ell)}$ has linear quotients, and hence it is componentwise linear. \\

If an ideal $I$ has a linear resolution, then  $I^2$ may not have this property. The first such example is due to Terai, which appeared in \cite[Remark 3]{Conca2000}. Another example of such ideals is due to Sturmfels \cite{Sturmfels}. Another important consequence of our work is the following result.\\\\
{\bf Theorem B} (see Theorem ~\ref{TH}).
Let $\Delta$ be a simplicial complex and $W$ be a cycle cover of $\Delta$. Let $\bar{\Delta}$ be the simplicial complex obtained from $\Delta$ by attaching non-pure skeletons at all vertices of $W$. If $J(\bar{\Delta})^{(\ell)}=J(\bar{\Delta})^{\ell}$ for all $\ell \geq 1$, then the following are equivalent.
\begin{enumerate}[(a)]
	\item $J(\bar{\Delta})$ has a linear resolution.
	\item $J(\bar{\Delta})^{\ell}$ has a linear resolution for some $\ell \geq 1$.
	\item $J(\bar{\Delta})^{\ell}$ has a linear resolution for all $\ell \geq 1$.
	\item $R/I(\bar{\Delta})$ is Cohen-Macaulay.
	\item $\bar{\Delta}$ is unmixed.
\end{enumerate}

\section{Preliminaries}
In this section, we introduce some basic notation and terminology used in the paper.
Let $n \in \mathbb{N}$, where $\mathbb{N}$ is the set of all non-negative integers. For simplicity, we denote the set $\{r \in \mathbb{N}_{>0}:r \le n\}$  by $[n].$ Further, we use notation $[n]^a$ for the set 
$\{(\ell_1,\dots,\ell_a): \ell_i \in [n]\}.$

Let $\Delta$ be a simplicial complex on the vertex set $V=\{x_1,\dots,x_n\}.$ If 
$F \in \Delta$, then we say that $F$ is a \emph{face} of $\Delta.$ A face $F$ of $\Delta$ is called a \emph{facet} of $\Delta$ if $F$ is maximal element of $\Delta$ with respect to inclusion. We write $\mathcal{F}(\Delta)$ for the set of all facets of $\Delta.$ We define the \emph{dimension} of a face $F$ of $\Delta$ by $\dim{F} = |F|-1$, where $|F|$ is the cardinality of $F.$ The dimension of the simplicial complex $\Delta$ is defined by
$$\dim{\Delta} = \max\{\dim{F}:F \in \mathcal{F}(\Delta)\}.$$

For $s \in [\dim{\Delta}] \cup \{0\}$, the \emph{$s$th skeleton} of $\Delta$, denoted by $\Delta^{(s)}$, is the simplicial complex defined as
$$\Delta^{(s)} = \{F \in \Delta: \dim{F} \le s\}.$$

If $\Delta$ is a simplicial complex with 
$\mathcal{F}(\Delta) = \{F_1,\dots,F_t\}$, then we say that $\Delta$ is generated by $F_1,\dots,F_t$, and we write
$$\Delta = \langle F_1,\dots,F_t \rangle.$$
Let $\Delta'$ be a simplicial complex such that 
$\mathcal{F}(\Delta') \subset \mathcal{F}(\Delta).$ Then we say that $\Delta'$ is a \emph{sub-collection} of $\Delta.$ By a \emph{simplex}, we mean a simplicial complex with exactly one facet. The concept of pure and non-pure complete graphs is introduced by Selvaraja in \cite{2020Selvaraja}.
For simplicial complexes, we generalize this concept in the following definition.

\begin{defn} \label{def1}
    Let $\Gamma_1,\dots,\Gamma_m$ be simplices with one common vertex $x$ and $\dim{\Gamma_k} \ge 1$ for all $k \in [m].$ For each $k \in [m]$, let 
    $s_k \in [\dim{\Gamma}_k].$ The simplicial complex obtained by joining skeletons $\Gamma_1^{(s_1)},\dots,\Gamma_m^{(s_m)}$ at $x$ is called \emph{skeleton complex} and is denoted by $\Gamma(x).$ In addition to this, $\Gamma(x)$ is said to be a \emph{pure skeleton complex} if for all $k \in [m]$, 
    $s_k < \dim{\Gamma_k}.$ Otherwise, we say that $\Gamma(x)$ is a \emph{non-pure skeleton complex}. 
\end{defn}

\begin{exmp}
    The simplicial complex $\Gamma$ with facets 
    $$\{x_1,x_2,x_3\},\{x_1,x_2,x_4\},\{x_1,x_3,x_4\},\{x_2,x_3,x_4\},\{x_1,x_5,x_6\},\{x_1,x_5,x_7\},\{x_1,x_6,x_7\},\{x_5,x_6,x_7\}$$  as
    shown in Figure ~\ref{Fig-Skeleton1} is an example of a pure skeleton complex while the simplicial complex $\Gamma'$ with facets
    $$\{x_1,x_2,x_3\},\{x_1,x_2,x_4\},\{x_1,x_3,x_4\},\{x_2,x_3,x_4\},\{x_1,x_5,x_6\}$$ as shown in Figure ~\ref{Fig-Skeleton2} is an example of a non-pure skeleton complex. 
    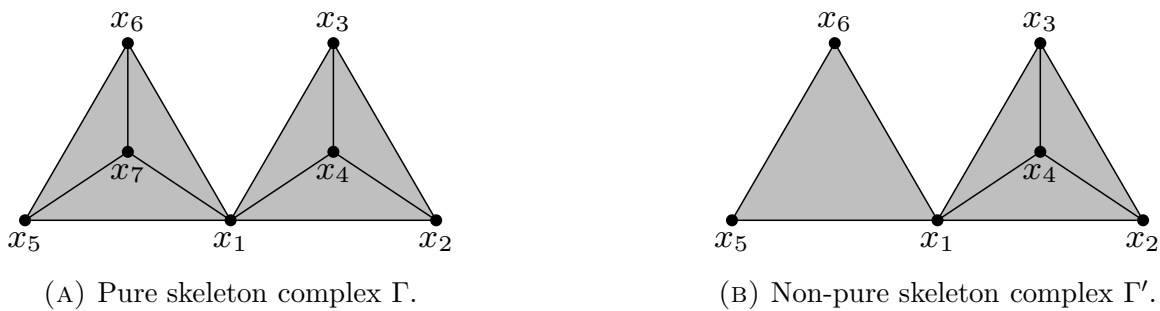
\begin{figure}[H]
	\begin{subfigure}[b]{0.4\textwidth}
		\centering
		\resizebox{\linewidth}{!}{
		\begin{tikzpicture}[scale=0.8][line cap=round,line join=round,>=triangle 45,x=1.0cm,y=1.0cm]
		  \fill[lightgray] (0,0) -- (2.4,0) -- (1.2,2.07) -- cycle;
		  \fill[lightgray] (0,0) -- (-2.4,0) -- (-1.2,2.07) -- cycle;
		  \draw (0,0) -- (2.4,0);
	   	\draw (0,0) -- (1.2,2.07);
    	  \draw (2.4,0) -- (1.2,2.07);
            \draw (1.2,0.8) -- (2.4,0);
		  \draw (1.2,0.8) -- (0,0);
		  \draw (1.2,0.8) -- (1.2,2.07);
		  \draw (0,0) -- (-2.4,0);
		  \draw (0,0) -- (-1.2,2.07);
		  \draw (-2.4,0) -- (-1.2,2.07);
		  \draw (-1.2,0.8) -- (-2.4,0);
		  \draw (-1.2,0.8) -- (0,0);
		  \draw (-1.2,0.8) -- (-1.2,2.07);
				
		  \begin{scriptsize}
				\fill  (0,0) circle (2.0pt);
				\draw[below] (0,0) node {$x_1$};
                \fill  (2.4,0) circle (2.0pt);
				\draw[below] (2.4,0) node {$x_2$};
                \fill  (1.2,2.07) circle (2.0pt);
				\draw[above] (1.2,2.07) node {$x_3$};
				\fill  (1.2,0.8) circle (2.0pt);
				\draw[below] (1.2,0.8) node {$x_4$};
                \fill  (-2.4,0) circle (2.0pt);
			\draw[below] (-2.4,0) node {$x_5$};
                \fill  (-1.2,2.07) circle (2.0pt);
				\draw[above] (-1.2,2.07) node {$x_6$};
				\fill  (-1.2,0.8) circle (2.0pt);
				\draw[below] (-1.2,0.8) node {$x_7$};
		  \end{scriptsize}
		\end{tikzpicture}
		}
		\caption{Pure skeleton complex $\Gamma.$}
        \label{Fig-Skeleton1}
	\end{subfigure}
	\hspace{0.15\textwidth}
	\begin{subfigure}[b]{0.4\textwidth}
		\centering
		\resizebox{\linewidth}{!}{
		\begin{tikzpicture}[scale=0.8][line cap=round,line join=round,>=triangle 45,x=1.0cm,y=1.0cm]
		  \fill[lightgray] (0,0) -- (2.4,0) -- (1.2,2.07) -- cycle;
		  \fill[lightgray] (0,0) -- (-2.4,0) -- (-1.2,2.07) -- cycle;
		  \draw (0,0) -- (2.4,0);
		  \draw (0,0) -- (1.2,2.07);
		  \draw (2.4,0) -- (1.2,2.07);
            \draw (1.2,0.8) -- (2.4,0);
		  \draw (1.2,0.8) -- (0,0);
		  \draw (1.2,0.8) -- (1.2,2.07);
		  \draw (0,0) -- (-2.4,0);
		  \draw (0,0) -- (-1.2,2.07);
		  \draw (-2.4,0) -- (-1.2,2.07);
				
		  \begin{scriptsize}
			\fill  (0,0) circle (2.0pt);
				\draw[below] (0,0) node {$x_1$};
                \fill  (2.4,0) circle (2.0pt);
				\draw[below] (2.4,0) node {$x_2$};
                \fill  (1.2,2.07) circle (2.0pt);
				\draw[above] (1.2,2.07) node {$x_3$};
				\fill  (1.2,0.8) circle (2.0pt);
				\draw[below] (1.2,0.8) node {$x_4$};
                \fill  (-2.4,0) circle (2.0pt);
				\draw[below] (-2.4,0) node {$x_5$};
                \fill  (-1.2,2.07) circle (2.0pt);
				\draw[above] (-1.2,2.07) node {$x_6$};
		  \end{scriptsize}
		\end{tikzpicture}
		}
		\caption{Non-pure skeleton complex $\Gamma'.$}
        \label{Fig-Skeleton2}
	\end{subfigure}
	\caption{Pure and non-pure skeleton complexes.}
	\label{Fig-Skeleton}
    \end{figure}
\end{exmp}

A simplicial complex $\Delta = \langle F_1,\dots,F_t \rangle$ is said to be \emph{connected} if for each $r \neq s$, we have a sequence $F_{k_1},\dots,F_{k_q}$ of facets of $\Delta$ with $F_{k_1} = F_r$ and $F_{k_q} = F_s$ such that $F_{k_p} \cap F_{k_{p+1}} \ne \emptyset$ for all $p \in [q-1].$
A facet $F$ of $\Delta$ is called a \emph{leaf} if it satisfies any one of the following:
\begin{enumerate}[(i)]
    \item $\Delta$ has exactly one facet $F$; or
		
    \item there is a facet $G$ of $\Delta$ with $G \neq F$ such that $F\cap H \subset F\cap G$ for every $H \in \Delta$ with $H \neq F.$
\end{enumerate}
The facet $G$ in (ii) is called a \emph{branch} of F. In addition to this, if we have a linear order $F_{k_1},\dots,F_{k_t}$ of all the facets of $\Delta$ such that $$F \cap F_{k_1} \supset \cdots \supset F \cap F_{k_t},$$ 
then $F$ is called a \emph{good leaf} of $\Delta.$ If every non-empty sub-collection of $\Delta$ has a leaf, then $\Delta$ is called a \emph{forest}. In particular, we use the term \emph{simplicial tree} for a connected forest.
By a \emph{good leaf order}, we mean a linear order $F_1,\dots,F_t$ of all the facets of $\Delta$ such that for every $k \in [t-1]$, $F_k$ is a good leaf of subcollection 
$\langle F_k,\dots,F_t\rangle$ of $\Delta.$ 

Forests are characterized in \cite{Herzog2006SGV} by introducing the notion of special cycles. 
An alternating sequence 
$x_{i_1},F_{i_1},\dots,x_{i_q},F_{i_q},x_{i_{q+1}} = x_{i_1}$ 
of distinct vertices and facets is called a cycle if for each 
$p \in [q]$, $x_{i_p},x_{i_{p+1}} \in F_{i_p}.$ A cycle is special if it has no facet containing more than two vertices of the cycle.

\begin{minipage}{\linewidth}
    \hspace{-0.5cm}	
    \begin{minipage}{0.7\linewidth}
	\begin{exmp}\label{E1}
		  Let $\Delta$ be a simplicial complex with facets $\{x_1,x_2,x_3\},$ 
            $\{x_2,x_4,x_5\},\{x_2,x_3,x_5\},\{x_3,x_5,x_6\}$ as shown in Figure ~\ref{Fig-Cycle}. Then \\
            the cycle $$x_2,\{x_1,x_2,x_3\},x_3,\{x_3,x_5,x_6\},x_5,\{x_2,x_4,x_5\},x_2$$ is special one, while the cycle 
            $$x_2,\{x_1,x_2,x_3\},x_3,\{x_3,x_5,x_6\},x_5,\{x_2,x_3,x_5\},x_2$$ is one which is not special.
		\end{exmp}
    \end{minipage}
    \hspace{-0.4cm}	
    \begin{minipage}{0.22\linewidth}
	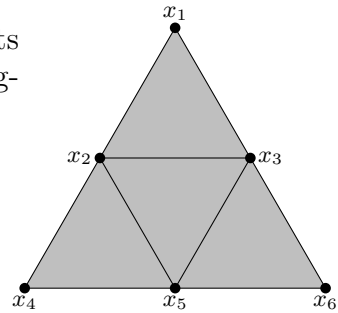
\begin{figure}[H]
            \centering
            \captionsetup{width=1.5\linewidth}
		  \begin{tikzpicture}[node distance=1cm, auto, every node/.style = {inner sep=1pt,outer sep=2pt}, vrtc/.style = {inner sep=3pt}]
				\fill[lightgray] (0,0) -- (2,0) -- (1,1.732) -- cycle;
				\fill[lightgray] (0,0) -- (-2,0) -- (-1,1.732) -- cycle;
				\fill[lightgray] (-1,1.732) -- (0,0) -- (1,1.732) -- cycle;
				\fill[lightgray] (-1,1.732) -- (0,3.464) -- (1,1.732) -- cycle;
				\draw (0,0) -- (2,0);
				\draw (0,0) -- (1,1.732);
				\draw (2,0) -- (1,1.732);
                \draw (0,0) -- (-2,0);
				\draw (0,0) -- (-1,1.732);
				\draw (-2,0) -- (-1,1.732);
				\draw (-1,1.732) -- (0,3.464);
				\draw (-1,1.732) -- (1,1.732);
				\draw (0,3.464) -- (1,1.732);
    
				\begin{scriptsize}
		          \fill  (0,3.464) circle (2.0pt);
                    \draw[above] (0,3.464) node {$x_1$};
		          \fill  (-1,1.732) circle (2.0pt);
		          \draw[left] (-1,1.732) node {$x_2$};
		          \fill  (1,1.732) circle (2.0pt);
		          \draw[right] (1,1.732) node {$x_3$};
		          \fill  (-2,0) circle (2.0pt);
	              \draw[below] (-2,0) node {$x_4$};
		          \fill  (0,0) circle (2.0pt);
		          \draw[below] (0,0) node {$x_5$};
		          \fill  (2,0) circle (2.0pt);
		          \draw[below] (2,0) node {$x_6$};
				\end{scriptsize}
		  \end{tikzpicture}
		  \caption{Cycles and \\
                        special cycles.}
		  \label{Fig-Cycle}
        \end{figure}
    \end{minipage}
\end{minipage}

The following remark gives equivalent conditions for forests.

\begin{thm}\label{Th-Forest}\cite[Theorem 3.2 and Corollary 3.4]{Herzog2006SGV}
	Let $\Delta$ be a simplicial complex. Then
	\begin{enumerate}[(a)]
		\item $\Delta$ is forest if and only if it has no special cycle of length $\ge 3.$
		
		\item $\Delta$ has a good leaf order if and only if $\Delta$ is a forest. In particular, every forest has a good leaf.
	\end{enumerate}
\end{thm}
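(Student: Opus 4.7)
The plan is to handle the two parts separately, deducing (b) from (a) via an inductive peeling argument.

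For the forward direction of (a), I would argue by contradiction. Suppose $\Delta$ is a forest that contains a special cycle $x_{i_1}, F_{i_1}, \ldots, x_{i_q}, F_{i_q}, x_{i_1}$ with $q \ge 3$. The subcollection $\Delta' = \langle F_{i_1}, \ldots, F_{i_q}\rangle$ must have a leaf $F_{i_p}$ with some branch $G = F_{i_j}$ in $\Delta'$. Applying the leaf inequality $F_{i_p} \cap H \subset F_{i_p} \cap G$ to $H = F_{i_{p-1}}$ and $H = F_{i_{p+1}}$ forces both $x_{i_p}$ and $x_{i_{p+1}}$ into $G$. Since $G$ already contains its own cycle vertices $x_{i_j}$ and $x_{i_{j+1}}$, a short case check (using $q \ge 3$) shows that $G$ then contains at least three cycle vertices, contradicting specialness.

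For the reverse direction of (a), I would take the contrapositive and manufacture a special cycle from any leaf-free subcollection $\Delta'$. Starting at any facet $F$, failure of the branch property gives, for every $G \in \Delta'$, a vertex in $F \cap H$ for some $H \ne F$ that escapes $G$. Iterating this yields a closed walk in $\Delta'$, which must contain a cycle. The substantive step is to promote this cycle to a \emph{special} one: whenever some facet of the cycle contains a third cycle vertex, shortcut through it; a minimality argument on length then shows that the shortest cycle so obtained is special, producing the required obstruction.

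For (b), the forward direction is immediate: in a good leaf order $F_1, \ldots, F_t$, each $F_k$ is a leaf of $\langle F_k, \ldots, F_t\rangle$, so every subcollection has a leaf and $\Delta$ is a forest. For the converse, any subcollection of a forest is a forest (a special cycle of length $\ge 3$ in the subcollection is such a cycle in $\Delta$, so one applies part (a)), so it suffices to exhibit a good leaf and then peel inductively. To find one, I would take a leaf $F$ of $\Delta$ and consider the relation $H \preceq H'$ iff $F \cap H \subset F \cap H'$ on facets of $\Delta$; the absence of special cycles is precisely what forces this relation to become a total order on the intersections $\{F \cap H : H \in \Delta\}$, delivering the linear arrangement demanded by the good-leaf definition.

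The main obstacle will be the reverse direction of (a): converting a purely negative hypothesis (\textit{no branch exists}) into the positive combinatorial object of a special cycle. The naive iterative walk yields a closed cycle but with no a priori control over how many cycle vertices can sit inside a single facet, so extracting a \emph{special} cycle demands a careful minimality and shortcutting argument — this is where the real combinatorial content of the theorem lies. Once (a) is in hand, the good-leaf existence needed for (b) should follow with comparatively less effort.
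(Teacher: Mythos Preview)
The paper does not prove this statement at all: Theorem~\ref{Th-Forest} is quoted verbatim from \cite[Theorem 3.2 and Corollary 3.4]{Herzog2006SGV} and invoked as a black box, so there is no proof in the paper to compare your proposal against.

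That said, your outline is a reasonable sketch of how the cited result is established. The forward direction of (a) is fine as written. Your handling of (b) is broadly correct in spirit, though the assertion that ``the absence of special cycles is precisely what forces this relation to become a total order'' is doing a lot of work and is not literally how the original argument proceeds; in \cite{Herzog2006SGV} the existence of a good leaf is obtained via a more structural route rather than a direct appeal to the no-special-cycle condition on the intersection poset. You have also correctly identified the reverse direction of (a) as the crux: your walk-and-shortcut idea is the right shape, but turning ``no facet is a leaf'' into an honest special cycle requires a genuine minimality argument that you have only gestured at, and this is exactly where a full proof would need care.
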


\begin{defn}\label{cycle}
	Let $\Delta$ be a simplicial complex on the vertex set $V=\{x_1,\dots,x_n\}.$ A subset $W$ of $V$ is called a cycle cover of $\Delta$ if it intersects with every special cycle of $\Delta.$
\end{defn}

It is important to note that if $W$ is a cycle cover of a simplicial complex $\Delta$, then $\Delta \setminus W$ is a forest.
For example, $W = \{x_5\}$ is a cycle cover of the simplicial complex of Figure ~\ref{Fig-Cycle}.

Let $F$ be a face of a simplicial complex $\Delta.$ The simplicial complex defined by the formula
$$\del_{\Delta}(F) = \{ G \in \Delta : G \cap F = \emptyset\}$$
is called the \emph{deletion} of $F$, while the simplicial complex defined by the formula
$$\link_{\Delta}(F) = \{G\in \Delta : G\cap F = \emptyset, G\cup F \in \Delta\}$$ is called the \emph{link} of $F.$
We say that vertex $x$ is a \emph{shedding vertex} of $\Delta$ if for every face $F$ of $\link_{\Delta}(x)$, there exists a face
$F'$ of $\del_{\Delta}(x)$ such that $F \subsetneq F'.$  We say that a simplicial complex $\Delta$ is 
\emph{vertex decomposable} if either it is a simplex or it has a shedding vertex $x$ such that $\del_{\Delta}(x)$ and $\link_{\Delta}(x)$ are vertex decomposable. Furthermore, $\Delta$ is said to be \emph{shellable} if there exists a linear order $F_{k_1}\dots,F_{k_t}$ of all facets of $\Delta$ such that for all $r,s \in [t]$ with $r < s$, there exists $x \in F_{k_s} \setminus F_{k_r}$ and $j \in [s-1]$ with $F_{k_s}\setminus F_{k_j} =\{x\}.$

A \emph{hypergraph} $\h$ on the vertex set $V$ is a pair $\h = (V,\e)$, where $\e$ is a non-empty collection of  subsets of $V.$ Elements of $V$ and $\e$ are called \emph{vertices} and \emph{edges}, respectively. By $V(\h)$, we mean the set of vertices of $\h$, and by $\e(\h)$ we mean the set of edges of $\h.$ We say that a hypergraph $\h$ is \emph{simple} if no edge of $\h$ is contained properly in some other edge. An edge $E$ is called \emph{trivial} if $E = \{x\}$ for some $x \in V.$ Further, a vertex $x \in V$ is called an \emph{isolated} vertex if either $\{x\} \in \e(\h)$, or $x \notin E$ for all $E \in \e(\h).$ An \emph{isolated} hypergraph is one whose every vertex is isolated. In this paper, we only treat simple hypergraphs.

\vspace{0.2cm}
\noindent
\textbf{Notation:}
	Let $\h$ be a hypergraph. Then $\h^\circ$ denotes the hypergraph obtained by deleting isolated vertices of $\h.$

\begin{defn}
    Let $\h = (V(\h),\e(\h))$ be a hypergraph. Then 
    \begin{enumerate}[(i)]
        \item A subset of $V(\h)$ is called a \emph{vertex cover} of $\h$ if it intersects with every edge of $\h.$ A \emph{minimal vertex cover} is a vertex   cover which is minimal among all vertex covers with respect to inclusion.
		
        \item  A subset of $V(\h)$ is called a \emph{independent set} if it does not contains any edge of $\h.$ A \emph{maximal independent set} is an independent set which is maximal among all independent sets with respect to inclusion. 
    \end{enumerate}
\end{defn}

Let $\Delta$ be a simplicial complex on the vertex set $V.$ Then we associate a hypergraph to $\Delta$ given by $\h(\Delta) = (V,\mathcal{F}(\Delta)).$
A subset of $V$ is called a \emph{vertex cover} of $\Delta$ if it is a vertex cover of $\h(\Delta).$
If any two minimal vertex covers of $\Delta$ have the same cardinality, then we say that $\Delta$ is \emph{unmixed}.

The simplicial complex generated by all the maximal independent sets is called \emph{independence complex} of $\h.$ We write $\Delta(\h)$ for the independence complex of a hypergraph $\h.$ By a \emph{shedding vertex} of $\h$, we mean a shedding vertex of $\Delta(\h).$ A hypergraph $\h$ is said to be \emph{vertex decomposable} if its independence complex $\Delta(\h)$ is vertex decomposable.
	
Let $x \in V(\h).$ Consider the sets
$$\e_{\star} = \{E \setminus \{x\}: E \in \e(\h), x \in E\}
$$
and
$$\e^{\star} = \{E \in \e(\h): x \not{\in} E,F\setminus \{x\}\not \subset E ~\mbox{for all} ~ F\in \e(\h)\setminus\{E\}\}.$$	
The \emph{contraction} $\h / x$ of a vertex 
$x \in V(\h)$ in $\h$ is a hypergraph with the vertex set 
$$V(\h / x) = V(\h)\setminus \{x\}$$ and the edge set $$\e(\h / x) = \e_{\star} \cup \e^{\star}.$$
Furthermore, the \emph{deletion} $\h \setminus {x}$ of a vertex 
$x \in V(\h)$ is a hypergraph with the vertex set
$$V(\h \setminus {x}) = V(\h) \setminus\{x\}$$ 
and the edge set
$$\e(\h \setminus {x}) = \{E \in \e(\h): x \not{\in} E\}.$$ 
Observe that if $E \in \e(\h \setminus x)$, then $E \in \e(\h).$ Also, if $E \in \e(\h / x)$, then either $E \in \e(\h)$, or $E \cup \{x\} \in \e(\h).$

\begin{exmp}\label{Ex-NonConstructibleEdge}
    Let $\h$ be a hypergraph on the vertex set $V(\h) = \{x_1,\dots,x_{10}\}$ and the edge set
    $$\e(\h) = \left\{\begin{array}{ll}
                    & \{x_1,x_3,x_5\},\{x_2,x_3,x_5\},\{x_1,x_4,x_5\},\{x_1,x_3,x_6\},\\
                    & \{x_5,x_7,x_9\},\{x_6,x_7,x_9\},\{x_5,x_8,x_9\},\{x_5,x_7,x_{10}\}
	           \end{array}\right\}.$$
    Observe that for $i = 2,4,8,10$, $\{x_i\} \notin \e(\h).$ Therefore, $\{x_i\} \notin \e(\h \setminus x_5)$, however $x_i$ is an isolated vertex in 
    $\h \setminus x_5.$ Also, neither $\{x_6\} \in \e(\h)$, nor $\{x_5,x_6\} \in \e(\h).$ Therefore,  $\{x_6\} \notin \e(\h / x_5)$, however $x_6$ is an isolated vertex in $\h / x_5.$ 
\end{exmp}

We use notation $\h/(x_1,\dots,x_p)$ for the hypergraph $((\h/x_1)\cdots)/x_p$, and $\h \setminus (x_1,\dots,x_p)$ for the hypergraph 
$((\h \setminus x_1) \dots)\setminus x_p.$



Let $\mathbb{K}$ be a field. We identify the vertices of hypergraph $\h$ with variables of the polynomial ring $R = \mathbb{K}[x_1,\dots,x_n].$ The \emph{edge ideal} $I(\h)$ of $\h$ is the squarefree monomial ideal defined by
$$I(\h) = \langle x_{j_1} \cdots x_{j_q}:\{x_{j_1},\dots,x_{j_q}\} \in \e(\h) \rangle.$$
It is important to note that 
$I(\h / x) = I(\h):x$ and $I(\h \setminus {x}) = I(\h)\cap k[x_1,\dots,\hat{x_j},\dots,x_n]$, where $x = x_j \in V(\h).$ Let $I_{\Delta}$ denotes the Stanley–Reisner ideal of a simplicial complex $\Delta.$ Then we have 
$I_{\link_{\Delta}(x)} = (I_{\Delta}:x,x)$, while 
$I_{\del_{\Delta}(x)} = (I_{\Delta},x).$ Since
$I(\h) = I_{\Delta(\h)}$, it follows  immediately that
$\Delta(\h / x) = \link_{\Delta(\h)}(x)$ and $\Delta(\h \setminus {x}) = \del_{\Delta(\h)}(x).$
Thus the definition of vertex decomposability of a hypergraph takes the following form.

\begin{defn}
	A hypergraph $\h = (V(\h),\e(\h))$ is said to be \emph{vertex decomposable} if either it is an isolated hypergraph or it has a shedding vertex $x$ such that $\h/x$ and $H\setminus x$ both are vertex decomposable.
\end{defn}

\noindent
\textbf{Note:} Disjoint union of vertex decomposable hypergraphs is vertex decomposable.

The main purpose of this paper is to study the cover ideals. The \emph{cover ideal} $J(\h)$ of $\h$ is the squarefree monomial ideal given by the formula
$$J(\h) = \langle x_{j_1} \cdots x_{j_q}:\{x_{j_1},\dots,x_{j_q}\} ~\mbox{is a vertex cover of}~ \h \rangle.$$  
One can easily check that $J(\h) = I(\h)^{\lor}$, where $I(\h)^{\lor}$ is the Alexander dual of $I(\h).$ 
The \emph{facet ideal} of a simplicial complex $\Delta$, denoted by $I(\Delta)$, is the squarefree monomial ideal in $R$ given by
$$I(\Delta) = \langle x_{j_1} \cdots x_{j_q}:\{x_{j_1},\dots,x_{j_q}\} \in \mathcal{F}(\Delta) \rangle.$$ 
One can write $I(\Delta) = I(\h(\Delta)).$ Thus, we can define the \emph{vertex cover ideal} of $\Delta$ by 
$J(\Delta) = J(\h(\Delta))$ or equivalently, by $J(\Delta) = I(\Delta)^{\vee}.$ 
    
Let $M$ be a finitely generated $\mathbb{Z}$-graded $R$-module. If 
$\beta_{i,j}^R(M)$ denotes $(i,j)^{th}$ graded Betti number of $M$, then \emph{Castelnuovo-Mumford regularity} of $M$, denoted by $\reg(M)$, is defined as $$\reg(M)=\max\{j-i:\beta_{i,j}^R(M)\neq 0\}.$$
We say that a module $M$ has \emph{linear resolution} if there exists a integer $d$ such that $\beta_{i,i+b}^R(M) = 0$ for all $i$ and for all $b \neq d.$
Further, an $R$-module $M$ is said to be \emph{sequentially Cohen-Macaulay} if there exists a finite sequence
		\begin{equation*}
			0 = M_0\subset M_1 \subset \cdots \subset M_r = M
		\end{equation*}
of graded $R$-submodules of $M$ so that quotient module ${M_i}/{M_{i-1}}$ is Cohen-Macaulay for all $i\in [r]$ and $\dim(M_i/M_{i-1})<\dim(M_{i+1}/M_i)$ for all $i\in [r-1].$ In particular, if $R/I_{\Delta}$ is Cohen-Macaulay (resp. sequentially Cohen-Macaulay) ring, then we say that simplicial complex $\Delta$ is \emph{Cohen-Macaulay} (resp. \emph{sequentially Cohen-Macaulay}) over $\mathbb{K}.$ To say that a hypergraph $\h$ is \emph{shellable} (resp. \emph{sequentially Cohen-Macaulay}) is equivalent to saying that its independence complex $\Delta(\h)$ is shellable (resp. sequentially Cohen-Macaulay). The following implications for a hypergraph are known.
$$\text{Vertex decomposable} \Rightarrow \text{Shellable} \Rightarrow \text{sequentially Cohen-Macaulay}.$$

\begin{defn}
    Let $I$ be a homogeneous ideal of $R.$ For $j \in \mathbb{N}$, let $I_{<j>}$ denotes the ideal generated by all homogeneous elements of $I$ of degree $j.$
    We say that $I$ is \emph{componentwise linear} if $I_{<j>}$ has a linear resolution for all $j.$ 
\end{defn}

The notion of linear quotients is an extremely useful technique to determine whether an ideal is componentwise linear or not. 

\begin{defn} 
   A monomial ideal $I$ of $R$ has \emph{linear quotients} if there exists an ordering $u_1,\dots,u_r$ of minimal generators of $I$ such that the ideal $\langle u_1,\dots,u_{i-1} \rangle: \langle u_i \rangle$ is generated by a subset of $\{x_1,\dots,x_n\}$ for all $2 \le i \le r.$
\end{defn}

Now, we recall the notion of  the $\ell$th symbolic power of a squarefree monomial ideal.

\begin{defn}
    Let $I$ be a squarefree monomial ideal in $R$, and let $I = \wp_1 \cap \cdots \cap \wp_t$ be irredundant primary decomposition of $I$, where $\wp_i$ is an ideal generated by a subset of $\{x_1,\dots,x_n\}.$ Then for $\ell \in \mathbb{N}_{>0}$, the \emph{$\ell$th symbolic power}  $I^{(\ell)}$ of $I$, is defined by
	$$I^{(\ell)} = \wp_1^{\ell} \cap \cdots \cap \wp_t^{\ell}.$$
\end{defn}

We end this section by introducing the notion of polarization. It is a very useful tool to
convert a monomial ideal into a squarefree monomial ideal.

\begin{defn}
    For each $i \in [m]$, let $u_i = \prod_{j = 1}^{n}x_j^{a_{ij}}$ be a monomial in $R$, and let $I = \langle u_1,\dots,u_m \rangle \subset R.$ If 
    $j \in [n]$, then we set $a_j = \max\{a_{ij}:i\in [m]\}.$ Consider a polynomial ring 
    $$T = \mathbb{K}[x_{11},\dots,x_{1a_1},x_{21},\dots,x_{2a_2},\dots,x_{n1},\dots,x_{na_n}].$$ Then the squarefree monomial ideal $\widetilde{I}$ in $T$ generated by squarefree monomials $w_1, \dots ,w_m$, where $w_i = \displaystyle\prod_{j=1}^{n}\prod_{k=1}^{a_{ij}}x_{jk}$, is called \emph{polarization} of $I.$
\end{defn}

\section{Vertex Decomposable Hypergraphs}
In this section, we create some basic tools which helps us to prove the vertex decomposability of a hypergraph.
Let $\ell,a \in \mathbb{N}_{>0}$ and $\mathbf{f} = (f_1,\dots,f_a) \in [\ell]^{a}.$ We write $|\mathbf{f}|$ for the sum $\sum_{p=1}^{a} f_p.$

The following construction introduced in \cite{BAR}, is the main tool for this paper. If $\ell$ is a non-negative integer and $\h$ is a hypergraph, then the Construction ~\ref{Construction} produces a new hypergraph $\h(\ell)$ such that 
 $\widetilde{J(\h)^{(\ell)}} = J(\h(\ell))$. 

\begin{construction}\label{Construction}\rm
    Let $\h$ be a hypergraph with the vertex set $V(\h) = \{x_1,\dots ,x_n\}$ and the edge set $\e(\h) = \{F_1,\dots ,F_t\}$. Suppose 
    $F = \{x_{j_1},\dots ,x_{j_a}\}$ is an edge in $\h.$ We construct a new hypergraph $F(\ell)$ with the vertex set 
    $$V(F(\ell)) =\{x_{j_p,f}: p \in [a], f \in [\ell]\}$$ 
    and the edge set
    $$\e(F(\ell)) =\{\{x_{j_1,f_1},\dots,x_{j_a,f_a}\}:|\mathbf{f}| \le \ell+a-1\},$$ 
    where $\ell \in \mathbb{N}_{>0}.$ Conventionally, $F(0)$ is an isolated hypergraph on the vertex set $$V(F(0))=\{x_{j_1,1},\dots ,x_{j_a,1}\}.$$
    Let $(\ell_1,\dots,\ell_t)\in \mathbb{N}^t$ be an ordered tuple. We construct a new hypergraph $\h(\ell_1,\dots,\ell_t)$ with the vertex set $$V(\h(\ell_1,\dots,\ell_t)) = \bigcup\limits_{i=1}^{t} V(F_i(\ell_i))$$ 
    and the edge set 
    $$\e(\h(\ell_1,\dots,\ell_t)) = \bigcup\limits_{i=1}^{t} \e(F_i(\ell_i)).$$ 
    For the case when $\ell_i = \ell$ for all $i \in [t]$, we denote the hypergraph $\h(\ell_1,\dots,\ell_t)$ by $\h(\ell).$
\end{construction}

The following is an illustrative example for the Construction ~\ref{Construction}.

\begin{exmp}
    Let $\h$ be a hypergraph on the vertex set $V(\h) = \{x_1,x_2,x_3,x_4\}$ and the edge set 
    $\e(\h) = \{\{x_1,x_2,x_3\},\{x_1,x_2,x_4\},\{x_1,x_3,x_4\},\{x_2,x_3,x_4\}\}.$ 
    Then edge set of the hypergraph $\h(1,2,3,2)$ is given by
    $$\e(\h(2)) = \left\{\begin{array}{ll}
                    & \{x_{1,1},x_{2,1},x_{3,1}\},\{x_{1,1},x_{2,1},x_{4,1}\},\{x_{1,2},x_{2,1},x_{4,1}\},\{x_{1,1},x_{2,2},x_{4,1}\},\\
                    & \{x_{1,1},x_{2,1},x_{4,2}\},\{x_{1,1},x_{3,1},x_{4,1}\},\{x_{1,2},x_{3,1},x_{4,1}\},\{x_{1,1},x_{3,2},x_{4,1}\},\\
		          & \{x_{1,1},x_{3,1},x_{4,2}\},\{x_{1,3},x_{3,1},x_{4,1}\},\{x_{1,1},x_{3,3},x_{4,1}\},\{x_{1,1},x_{3,1},x_{4,3}\},\\
                    & \{x_{1,2},x_{3,2},x_{4,1}\},\{x_{1,2},x_{3,1},x_{4,2}\},\{x_{1,1},x_{3,2},x_{4,2}\},\{x_{2,1},x_{3,1},x_{4,1}\},\\
                    & \{x_{2,2},x_{3,1},x_{4,1}\},\{x_{2,1},x_{3,2},x_{4,1}\},\{x_{2,1},x_{3,1},x_{4,2}\}
	           \end{array}\right\}.$$
\end{exmp}

\begin{lem}\label{L-VDmeansCL}
    Let $\h$ be a hypergraph such that $\h(\ell)$ is vertex decomposable; $\ell \in \mathbb{N}.$ Then $J(\h)^{(\ell)}$ has linear quotients, and hence it is componentwise linear.
\end{lem}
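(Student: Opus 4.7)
The plan is to chain together three standard facts: vertex decomposability implies shellability of the independence complex, shellability of a simplicial complex forces linear quotients of the Alexander dual of its Stanley--Reisner ideal, and linear quotients descend through depolarization. The identity $\widetilde{J(\h)^{(\ell)}}=J(\h(\ell))$ promised by Construction~\ref{Construction} is the bridge that connects $\h(\ell)$ to the symbolic power we care about.

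First I would invoke the polarization identity $\widetilde{J(\h)^{(\ell)}}=J(\h(\ell))$: each minimal vertex cover of $\h(\ell)$ corresponds, under the substitution $x_{j,f}\mapsto x_j$, to a minimal generator of $J(\h)^{(\ell)}$, and this is precisely how the vertex-splitting in Construction~\ref{Construction} was set up. Next, by hypothesis $\h(\ell)$ is vertex decomposable, so by definition its independence complex $\Delta(\h(\ell))$ is vertex decomposable, hence shellable by the implications recorded in the preliminaries.

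I would then apply the classical theorem (Herzog--Hibi--Zheng / Herzog--Takayama): if $\Delta$ is a (non-pure) shellable simplicial complex, the Alexander dual $I_\Delta^\vee$ has linear quotients. Applied to $\Delta(\h(\ell))$ this gives that $I(\h(\ell))^{\vee} = J(\h(\ell))$ has linear quotients, so the polarization $\widetilde{J(\h)^{(\ell)}}$ has linear quotients. Now I descend: given an ordering $\widetilde{u}_1,\ldots,\widetilde{u}_r$ of the generators of $\widetilde{J(\h)^{(\ell)}}$ whose successive colon ideals $\langle \widetilde{u}_1,\ldots,\widetilde{u}_{i-1}\rangle:\langle \widetilde{u}_i\rangle$ are generated by variables $x_{j,f}$, take the induced ordering $u_1,\ldots,u_r$ of generators of $J(\h)^{(\ell)}$ obtained by the substitution $x_{j,f}\mapsto x_j$. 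The substitution sends variables to variables and commutes with taking colons of monomial ideals in the required sense, so $\langle u_1,\ldots,u_{i-1}\rangle:\langle u_i\rangle$ is again generated by variables. Hence $J(\h)^{(\ell)}$ has linear quotients.

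Finally, ``linear quotients implies componentwise linear'' is a standard consequence: for each $j$, restricting the ordering to the generators of degree $\leq j$ and then to those of degree exactly $j$ exhibits $J(\h)^{(\ell)}_{\langle j \rangle}$ as an equigenerated ideal with linear quotients, which forces a linear resolution. The main obstacle I anticipate is the careful bookkeeping required for the depolarization step, ensuring that the variable-generated colons really do survive the substitution; this is technical but a well-established lemma in the literature on polarization.
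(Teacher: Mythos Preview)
Your proposal is correct and follows essentially the same route as the paper's proof: vertex decomposability of $\h(\ell)$ gives shellability, hence $J(\h(\ell))=\widetilde{J(\h)^{(\ell)}}$ has linear quotients, this property descends through depolarization, and linear quotients implies componentwise linear. The paper simply cites the relevant references (\cite{BAR}, \cite{HBook}, \cite{Fakhari}) for each step rather than sketching the depolarization argument you outline.
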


\begin{proof}
    Since $\h(\ell)$ is vertex decomposable, we get $\h(\ell)$ is shellable. Now by \cite[Lemma 3.5]{BAR}, we have $\widetilde{J(\h)^{(\ell)}} = J(\h(\ell))$. Thus by \cite[Theorem 8.2.5]{HBook}, $\widetilde{J(\h)^{(\ell)}}$ has linear quotients. By \cite[Lemma 3.5]{Fakhari}, $J(\h)^{(\ell)}$ has linear quotients. Hence by \cite[Theorem 8.2.15]{HBook}, $J(\Delta)^{(\ell)}$ is componentwise linear.  
\end{proof}

We fix some terminology that we use throughout the paper.
Let $\Cla$ denote the collection of all infinite strings with terms in the set $\{D,L\}.$ For any infinite string $\p \in \Cla$, we write $\p_r$ for its $r$th term. If $p \in \mathbb{N}$ and $\p \in \Cla$, then we set $$\Cla(\p,p) = \{\s \in \Cla :\s_r = \p_r ~\mbox{for all}~ r \in [p]\}.$$
Since $[0] = \emptyset$, it follows that $\Cla(\p,0) = \Cla$ for all $\p \in \Cla.$

Let $\h$ be a hypergraph on the vertex set $V =\{x_1,\dots,x_n\}$ and the edge set $\e(\h) = \{F_1,\dots,F_t\}$, and 
$(\ell_1,\dots,\ell_t) \in \mathbb{N}^t_{>0}.$ We write $\bm{x}_p$ for the $p$th term of a sequence  $\bm{x}.$ Let $\s$ be an infinite string in $\Cla$ and $\bm x = (x_{i_r,c_r})_{r = 1}^{\al}$ be a sequence of distinct vertices in $V(\h(\ell_1,\dots,\ell_t).$ We define a sequence of hypergraphs $(\h[\s,\bm x;r])_{r = 0}^{\infty}$, recursively as follows: Define $$\h[\s,\bm x;0] = \h(\ell_1,\dots,\ell_t).$$ 
Suppose $r \ge 1$ and $\h[\s,\bm x;r-1]$ is defined. Define
    \[ 
    \h[\s,\bm x;r] =\begin{cases}
                            \h[\s,\bm x;r-1]/{x_{i_r,c_r}} & ~\mbox{for}~ r \in [\al] ~\mbox{and}~ \s_r = L; \\
                            \h[\s,\bm x;r-1]\setminus {x_{i_r,c_r}} &  ~\mbox{for}~ r \in [\al] ~\mbox{and}~ \s_r = D;\\
                            \h[\s,\bm x;\al] & ~\mbox{for}~ r > \al.
	                \end{cases}
    \]
The hypergraph $\h[\s,\bm x;r]$ is called the \emph{$r$th hypergraph} determined by the string $\s$ and the sequence $\bm x.$

We say that the sequence $\bm x$ satisfies \emph{property $\mathfrak{P}$} with respect to $\s$ if for all $\de, \ep \in [\al]$ with $\de < \ep$ and 
$\s_{\de} = \s_{\ep} = L$, we have $x_{i_{\de}} \ne x_{i_{\ep}}.$ Let $\bm x$ satisfies the property $\mathfrak{P}$ with respect to $\s$ and $r \ge 0.$ We define
$$A[\s,\bm x;r] = \{x_{i_p} \in V(\h):x_{i_p,c_p} = {\bm x}_p, \s_p = L,p \in  [\min\{r,\al\}]\}$$ and 
$$B[\s,\bm x;r] = \{x_{i_p} \in V(\h):x_{i_p,c_p} = {\bm x}_p, \s_p = D,p \in [\min\{r,\al\}]\}.$$
Then, the set $A[\s,\bm x;r]$ is called \emph{$r$th set of vertices of contraction} determined by the string $\s$ and the sequence $\bm x$, and the set 
$B[\s,\bm x;r]$ is called  \emph{$r$th set of vertices of deletion} determined by the string $\s$ and the sequence $\bm x.$
Observe that $A[\s,\bm x;r] = A[\s,\bm x;\al]$ and $B[\s,\bm x;r] = B[\s,\bm x;\al]$ for all $r \ge \al.$
Whenever, string $\s$ and sequence $\bm x$ is clear from the context, we use notation $A_r$ for the set $A[\s,\bm x;r]$, and $B_r$ for the set 
$B[\s,\bm x;r].$
Further, if $r \in \mathbb{N}$ and $k \in [t]$, then we set 
$\ell_{k,r} = \max\{0,\ell_k - d_{k,r}\}$, where
$$d_{k,r} = -|F_k \cap A_r|+\sum\limits_{\substack{x_{i_p} \in F_k \cap A_r,\\x_{i_p,c_p} = {\bm x}_{p}}} c_p.$$
If $r \in [\al]$ with $\s_r = L$, then $x_{i_r} \notin A_{r-1}.$ Thus each $\ell_{k,r}$ is well-defined.
        
We say that $E = \{x_{j_1,f_1},\dots,x_{j_a,f_a}\}\subset V(\h(\ell_1,\dots,\ell_t))$ is \emph{constructible} in $\h[\s,\bm x;r]$ if there exists 
$k \in [t]$ such that the following conditions are satisfied:
\begin{enumerate}[(i)]
    \item $\{x_{j_1},\dots,x_{j_a}\} = F_k \setminus A_r$; and
		
    \item $\mathbf{f} = (f_1,\dots,f_a) \in [\ell_k]^a$ with $f_q > c_p$ whenever $x_{j_q} = x_{i_p} \in B_r\setminus A_r$ and $|\mathbf{f}| \le \ell_{k,r}+a-1.$
\end{enumerate}

\begin{exmp}
    Let $\h$ be a hypergraph on the vertex set $V = \{x_1,x_2,x_3,x_4\}$ and the edge set 
    $\e(\h) = \{F_1 = \{x_1,x_2,x_3\},F_2 = \{x_1,x_2,x_4\},F_3 = \{x_1,x_3,x_4\},F_4 = \{x_2,x_3,x_4\}\}.$ Let $\s$ be an infinite string in $\Cla$ such that 
    $\s_1 = \s_3 = D$ and $\s_2 = L.$ Consider the sequence $\bm x = (x_{1,1},x_{1,2},x_{2,1})$ of vertices in $V(\h(2)).$ We obtain 
    $$\e((\h[\s,\bm x;1])^\circ) =
    \left\{\begin{array}{ll}
	                   & \{x_{1,2},x_{2,1},x_{3,1}\},\{x_{1,2},x_{2,1},x_{4,1}\},\{x_{1,2},x_{3,1},x_{4,1}\},\{x_{2,1},x_{3,1},x_{4,1}\},\\
	                   & \{x_{2,2},x_{3,1},x_{4,1}\},\{x_{2,1},x_{3,2},x_{4,1}\},\{x_{2,1},x_{3,1},x_{4,2}\}
    \end{array}\right\}$$ 
    
    and $$\e((\h[\s,\bm x;2])^\circ) = \Bigl\{\{x_{2,1},x_{3,1}\},\{x_{2,1},x_{4,1}\},\{x_{3,1},x_{4,1}\}\Bigr\}.$$
    Also, for each $r \ge 3$, we have $$\e((\h[\s,\bm x;r])^\circ) = \Bigl\{\{x_{3,1},x_{4,1}\}\Bigr\}.$$
    It is easy to see that the set $\{x_{2,2},x_{3,1},x_{4,1}\}$ is constructible in $\h[\s,\bm x;3]$ but $\{x_{2,1},x_{3,2},x_{4,1}\}$ is not.
    
\end{exmp}

We repeatedly use the following lemma throughout the paper.

\begin{lem}\label{L-VDtool}
    Let $\h$ be a hypergraph and $(\ell_1,\dots,\ell_t) \in \mathbb{N}^t_{>0}$, where $t$ is the number of edges of $\h.$
    Further, let $p \in \mathbb{N}$ and $\p$ is any infinite string. Suppose that for each $\s \in \Cla(\p,p)$, there exists a sequence 
    $\bm x(\s)$ of vertices in $\h(\ell_1,\dots,\ell_t)$ with $\al(\s)$ terms such that
    \begin{enumerate}\rm
        \item \textit{$\al(\s) \ge p$,}
        
		\item \textit{$\h[\s,\bm x(\s);\al(\s)]$ is a vertex decomposable hypergraph,}

        \item \textit{if $\al(\s) > p$ and $p \le p' < \al(\s)$, then for each $\s' \in \Cla(\s,p')$, we have 
        $p' < \al(\s')$ and ${\bm x(\s)}_r = {\bm x(\s')}_r$ for all $r \in [p'+1]$, and}
		
		\item \textit{${\bm x(\s)}_{r}$ is a shedding vertex of $(\h[\s,\bm x(\s);r-1])^\circ$ for all 
		$p < r \le \al(\s).$} 
	\end{enumerate}
    Then $\h[\s,\bm x(\s);r]$ is a vertex decomposable hypergraph for all $r \ge p$ and for all $\s \in \Cla(\p,p).$ 
\end{lem}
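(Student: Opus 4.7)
The plan is to proceed by backward induction on $r$. A crucial preliminary observation is that since each $\bm x(\s)$ consists of distinct vertices drawn from the finite vertex set $V(\h(\ell_1, \dots, \ell_t))$, the value $\al(\s)$ is bounded above by a fixed integer $N := |V(\h(\ell_1, \dots, \ell_t))|$, independent of $\s$. For any $r \ge N$, the recursive definition gives $\h[\s, \bm x(\s); r] = \h[\s, \bm x(\s); \al(\s)]$, which is vertex decomposable by hypothesis (2); this anchors the induction.

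For the inductive step, fix $r$ with $p \le r < N$ and assume the conclusion holds at $r+1$ for every $\s \in \Cla(\p, p)$. Fix such a $\s$. If $\al(\s) \le r$, then again $\h[\s, \bm x(\s); r] = \h[\s, \bm x(\s); \al(\s)]$ is vertex decomposable by (2), so suppose $r < \al(\s)$. Set $v := \bm x(\s)_{r+1}$. By condition (4), $v$ is a shedding vertex of $(\h[\s, \bm x(\s); r])^\circ$, and one of the two hypergraphs $\h[\s, \bm x(\s); r]/v$, $\h[\s, \bm x(\s); r]\setminus v$ equals $\h[\s, \bm x(\s); r+1]$, which is vertex decomposable by the inductive hypothesis.

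To handle the remaining hypergraph, I construct a partner string $\s'$ by flipping the $(r+1)$st symbol of $\s$ and keeping every other symbol intact, so $\s' \in \Cla(\s, r) \subseteq \Cla(\p, p)$. Since $r < \al(\s)$, condition (3) with $p' = r$ supplies $r < \al(\s')$ together with $\bm x(\s)_{r'} = \bm x(\s')_{r'}$ for all $r' \in [r+1]$; in particular $\bm x(\s')_{r+1} = v$, and the hypergraphs $\h[\s, \bm x(\s); r]$ and $\h[\s', \bm x(\s'); r]$ coincide because they arise from the same first $r$ vertex operations. Consequently $\h[\s', \bm x(\s'); r+1]$ is exactly the remaining one among $\h[\s, \bm x(\s); r]/v$ and $\h[\s, \bm x(\s); r]\setminus v$, and it is vertex decomposable by applying the inductive hypothesis to $\s'$. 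Combining this with the shedding property of $v$ (and the fact that adjoining isolated vertices preserves vertex decomposability via the disjoint-union principle) certifies that $\h[\s, \bm x(\s); r]$ is vertex decomposable, closing the induction.

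The main conceptual hurdle is precisely this partner-string construction: one must check that condition (3) forces $\bm x(\s')_{r+1}$ to equal the same vertex $v = \bm x(\s)_{r+1}$, so that flipping the $(r+1)$st symbol of $\s$ yields the matching contraction/deletion pair at the common vertex, making both branches of the vertex decomposability definition simultaneously available. Once this alignment is verified, the downward induction is well-founded because $\al(\s) \le N$, and the base case (from (2)) and inductive step (from (3), (4), and the inductive hypothesis applied to both $\s$ and $\s'$) fit together without further obstruction.
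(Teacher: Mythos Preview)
Your proof is correct and follows essentially the same backward-induction strategy as the paper: establish the base case for large $r$ (you bound $\al(\s)$ by $N=|V(\h(\ell_1,\dots,\ell_t))|$, the paper takes $\al_0=\max_{\s}\al(\s)$), and for the inductive step flip the $(r+1)$st symbol of $\s$ to obtain a partner string $\s'$, invoke condition (3) to align the sequences through index $r+1$, and apply the inductive hypothesis to both $\s$ and $\s'$ to get the deletion and contraction branches. The only cosmetic difference is that the paper phrases the final step via the $(-)^\circ$ operation explicitly, whereas you absorb the isolated-vertex bookkeeping into the disjoint-union remark; both are equivalent.
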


\begin{proof}
    Let $\al_0 = \max\{\al(\s):\s \in \Cla(\p,p)\}.$ Then $\h[\s,\bm x(\s);r] = \h[\s,\bm x(\s);\al(\s)]$ for all $r \ge \al_0$ and for all $\s \in \Cla(\p,p).$ Therefore, we assume that $p \le r \le \al_0.$ We proceed by using induction on $\al_0-r.$ The result holds for $r = \al_0.$ Now, suppose that 
    $p \le r < \al_0$ and $\s$ be any infinite string in $\Cla(\p,p)$. If $r \ge \al(\s)$, then 
    $$\h[\s,\bm x(\s);r] = \h[\s,\bm x(\s);r+1],$$ and hence the result follows from induction. On the other hand, suppose that $r \in [\al(\s)-1] \cup \{0\}$ and $\s_{r+1} = D.$ Then $$\left(((\h[\s,\bm x(\s);r])^\circ)\setminus {{\bm x(\s)}_{r+1}}\right)^\circ = (\h[\s,\bm x(\s);r+1])^\circ.$$ Now, consider the string 
    $\s'$ with $\s'_s = \s_s$ for all $s \ne r+1$ and $\s'_{r+1} = L.$ Then $\s' \in \Cla(\s,r).$ Therefore, by given hypothesis, we have
    $$\left(((\h[\s,\bm x(\s);r])^\circ)/{\bm x(\s)}_{r+1} \right) ^\circ  = (\h[\s',\bm x(\s');r+1])^\circ.$$ Thus by induction, 
    $(\h[\s,\bm x(\s);r])^\circ)\setminus {{\bm x(\s)}_{r+1}}$ and $((\h[\s,\bm x(\s);r])^\circ)/{{\bm x(\s)}_{r+1}}$ are vertex decomposable hypergraphs. 
    Since ${\bm x(\s)}_{r+1}$ is a shedding vertex of the hypergraph $(\h[\s,\bm x(\s);r])^\circ$, we get $\h[\s,\bm x(\s);r]$ is vertex decomposable hypergraph. In a similar manner, we can prove the result when $\s_{r+1} = L.$
\end{proof}

\begin{rmk}\label{R-BasicRemark}
    Let $\h$ be a hypergraph, $\s$ be an infinite string in $\Cla$ and $(\ell_1,\dots,\ell_t) \in \mathbb{N}^t_{>0}$, where $t$ is the number of edges of $\h.$
    Further, let $\bm x = (x_{i_r,c_r})_{r = 1}^{\al}$ be a sequence of vertices in $V(\h(\ell_1,\dots,\ell_t))$ that satisfies the property $\mathfrak{P}$ with respect to $\s$ and $c_r = \min\{c:x_{i_r,c} \in V((\h[\s,\bm x;r-1])^\circ)\}.$ 
    Suppose that $q \in [\al].$ 
    \begin{enumerate}[(a)]
		\item Let $x_{i_q} = x_{i_p}$ for some $p \in [q-1]$ with $\s_p = D.$ Note that
        \begin{equation}\label{EQ1}
            V((\h[\s,\bm x;q-1])^\circ) \subset V((\h[\s,\bm x;p'])^\circ)
        \end{equation}
        for all $0 \le p' \le q-1.$ 
        It follows from ~\eqref{EQ1} that $x_{i_q,c_q} \in V((\h[\s,\bm x;p-1])^\circ).$ Thus, we have $c_{q} \ge c_p.$ Also, note that 
        $$\h[\s,\bm x;p] = \h[\s,\bm x;p-1] \setminus x_{i_{p},c_{p}}.$$ This forces $x_{i_{p},c_{p}} \notin V((\h[\s,\bm x;p])^\circ).$ Therefore, in view of \eqref{EQ1}, we can say that $c_{q} > c_p.$
		
		\item Let $E$ be an edge in $\h[\s,\bm x;q-1]$ with $x_{i_{q},c_{q}} \in E.$ If $E$ is a trivial edge, then 
        $E = \{x_{i_q,c_q}\}.$ Since $x_{i_q,c_q} \in V((\h[\s,\bm x;q-1])^\circ)$, there exists an edge $E' \in \e((\h[\s,\bm x;q-1])^\circ)$ such that $x_{i_q,c_q} \in E'.$ Then $E \subsetneq E'$, which contradicts the fact that $E' \in \e((\h[\s,\bm x;q-1])^\circ).$ Thus we must have 
         $E \in \e((\h[\s,\bm x;q-1])^\circ).$
    \end{enumerate}
\end{rmk}

The following theorem is proved in \cite{BAR} for the special case when 
$\h = \h(\Delta)$, where $\Delta$ is a simplicial tree. One can check that the proof does not require properties of a simplicial tree and can be proved for an arbitrary hypergraph. Here, we state the theorem for an arbitrary hypergraph and avoid the proof. 

\begin{thm}\label{Th-Constructible}
    With the notation of Remark ~\ref{R-BasicRemark}, any edge $E$ in $\h[\s,\bm x;r]$ is constructible in $\h[\s,\bm x;r].$ 
    Conversely, for every constructible set $E$ in $\h[\s,\bm x;r]$, there is a edge $E'$ in $\h[\s,\bm x;r]$ contained in $E.$
\end{thm}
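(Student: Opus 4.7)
The proof will proceed by induction on $r$, split into the two operational cases $\s_r = D$ and $\s_r = L$. The base case $r = 0$ is essentially a restatement of Construction~\ref{Construction}: since $A_0 = B_0 = \emptyset$, a set $E = \{x_{j_1,f_1},\dots,x_{j_a,f_a}\}$ is constructible in $\h[\s,\bm x;0] = \h(\ell_1,\dots,\ell_t)$ precisely when $\{x_{j_1},\dots,x_{j_a}\} = F_k$ for some $k$ and $\mathbf{f} \in [\ell_k]^a$ with $|\mathbf{f}| \le \ell_k + a - 1$. This is exactly the edge condition for $\e(F_k(\ell_k))$, so the two directions agree verbatim.

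\textbf{Inductive step, deletion case ($\s_r = D$).} Here $A_r = A_{r-1}$, $\ell_{k,r} = \ell_{k,r-1}$ for every $k$, and $B_r = B_{r-1} \cup \{x_{i_r}\}$, while $\h[\s,\bm x;r] = \h[\s,\bm x;r-1] \setminus x_{i_r,c_r}$. For the forward direction, take an edge $E$ of $\h[\s,\bm x;r]$; by Remark~\ref{R-BasicRemark}(b) it is already an edge of $(\h[\s,\bm x;r-1])^\circ$, and in particular $x_{i_r,c_r} \notin E$. Apply induction to obtain constructibility witnesses $(k,\mathbf{f})$ at level $r-1$. The only additional requirement at level $r$ is the inequality $f_q > c_p$ for a new pair $x_{j_q} = x_{i_r}$ with $c_p = c_r$; but since $x_{i_r,c_r}\notin E$, whenever $x_{j_q} = x_{i_r}$ we have $f_q \ne c_r$, and by Remark~\ref{R-BasicRemark}(a) the coordinate $c_r$ is strictly larger than any earlier $c_p$ with $x_{i_p} = x_{i_r}, \s_p = D$; so $f_q > c_p$ still, giving constructibility at level $r$. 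For the converse, a constructible $E$ at level $r$ satisfies all conditions of constructibility at level $r-1$ (the inequalities only got stricter), so by induction $E$ contains some edge $E'$ of $\h[\s,\bm x;r-1]$; the constraint $f_q > c_r$ for $x_{j_q} = x_{i_r}$ forces $x_{i_r,c_r} \notin E'$, so $E'$ survives the deletion.

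\textbf{Inductive step, contraction case ($\s_r = L$).} Now $A_r = A_{r-1} \cup \{x_{i_r}\}$, $B_r = B_{r-1}$, and $\ell_{k,r}$ is updated by subtracting $c_r - 1$ whenever $x_{i_r} \in F_k$. An edge of $\h[\s,\bm x;r-1] / x_{i_r,c_r}$ comes either from $\e^\star$ (an edge avoiding $x_{i_r,c_r}$ and not dominated) or from $\e_\star$ (obtained by removing $x_{i_r,c_r}$ from an edge containing it). For the forward direction, lift $E$ to an edge $\widetilde E \in \e(\h[\s,\bm x;r-1])$ (either $\widetilde E = E$ or $\widetilde E = E \cup \{x_{i_r,c_r}\}$) and apply induction; the index $k$ is inherited, the component set becomes $F_k \setminus A_r$ after removing the $x_{i_r}$-coordinate, and the sum $|\mathbf{f}|$ decreases by $c_r$ so that $|\mathbf{f}| \le \ell_{k,r-1} + (a+1) - 1 - c_r = \ell_{k,r} + a - 1$. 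For the converse, given a constructible $E$ at level $r$ with parameters $(k,\mathbf{f})$, define $\widetilde{\mathbf{f}}$ by inserting $c_r$ in the $x_{i_r}$-coordinate; then $\widetilde E = E \cup \{x_{i_r,c_r}\}$ is constructible at level $r-1$, and by induction contains an edge $\widetilde{E}'$; removing $x_{i_r,c_r}$ (which lies in $\widetilde E'$ by the choice $c_r = \min\{c : x_{i_r,c}\in V((\h[\s,\bm x;r-1])^\circ)\}$ from Remark~\ref{R-BasicRemark}) yields an edge of the contraction contained in $E$.

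\textbf{Main obstacle.} The delicate point is the contraction case, specifically dealing with the $\e^\star$ branch and confirming that sets which \emph{look} constructible are not silently killed by being dominated by another contracted edge. Handling this cleanly requires invoking the minimality property of $c_r$ from Remark~\ref{R-BasicRemark} to guarantee that the lifted edge $\widetilde E$ really does contain $x_{i_r,c_r}$, and that the inductive edge $\widetilde{E}'$ inside $\widetilde E$ can be chosen to contain $x_{i_r,c_r}$ as well (otherwise it would already be an edge of $\h[\s,\bm x;r]$, finishing that case immediately). Once this bookkeeping is managed, the constructibility conditions translate across the contraction exactly as the arithmetic of $\ell_{k,r}$ predicts, and the induction closes.
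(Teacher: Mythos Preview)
The paper does not actually give a proof of this theorem: it states that the result was proved in \cite{BAR} for the case $\h = \h(\Delta)$ with $\Delta$ a simplicial tree, observes that the argument there does not use the tree hypothesis, and explicitly says ``we state the theorem for an arbitrary hypergraph and avoid the proof.'' So there is no in-paper proof to compare against; your induction on $r$ with the $D$/$L$ case split is precisely the natural approach and almost certainly the one in \cite{BAR}.

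Your outline is essentially correct, but two points deserve tightening. First, in the deletion case you invoke Remark~\ref{R-BasicRemark}(b) to place $E$ in $(\h[\s,\bm x;r-1])^\circ$; that remark applies to edges \emph{containing} $x_{i_r,c_r}$, whereas here $x_{i_r,c_r}\notin E$. What you actually need is that any vertex $x_{i_r,f_q}$ appearing in $E$ satisfies $f_q\ge c_r$, and this follows directly from the minimality hypothesis $c_r=\min\{c:x_{i_r,c}\in V((\h[\s,\bm x;r-1])^\circ)\}$ once you note that a non-trivial edge lies in the $\circ$-hypergraph; combined with $f_q\ne c_r$ you get $f_q>c_r$. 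Second, in the contraction forward direction you treat only the situation where the inductive witness $F_k$ contains $x_{i_r}$. If $\widetilde E=E\in\e^\star$ and the witness $F_k$ from level $r-1$ does \emph{not} contain $x_{i_r}$, then $F_k\setminus A_r=F_k\setminus A_{r-1}$ and $\ell_{k,r}=\ell_{k,r-1}$, so constructibility passes through unchanged; and the case $\widetilde E=E\in\e^\star$ with $x_{i_r}\in F_k$ can be ruled out by replacing the $x_{i_r}$-coordinate in $E$ by $c_r$ and using the minimality of $c_r$ together with simplicity to contradict the $\e^\star$ condition. Your ``Main obstacle'' paragraph gestures at this, but it should be made explicit rather than left as a caveat.
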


In view of Theorem ~\ref{Th-Constructible}, if $E = \{x_{j,f}\}$ is constructible in $\h[\s,\bm x;r]$ if and only if $E$ is an edge in $\h[\s,\bm x;r].$ 
Let $\h$ be a hypergraph on the vertex set $V = \{x_1,\dots,x_5\}$ and edge set $\e(\h) = \{\{x_1,x_2,x_3\},\{x_3,x_4,x_5\}\}.$ Note that 
$\{x_{1,2}\} \notin \e(\h(2) \setminus x_{3,1}).$ Therefore $\{x_{1,2}\}$ is not constructible in $\h(2) \setminus x_{3,1}$ (see Example ~\ref{Ex-NonConstructibleEdge}).

\begin{corollary}\label{C-Constructible}
    With the notation of Remark ~\ref{R-BasicRemark}, let $r \ge 0$ and $x_{j,f} \in V((\h[\s,\bm x;r])^\circ).$ Then $x_j \notin A_r.$ Also, if 
    $x_j = x_{i_p} \in B_r \setminus A_r$, then $f > c_p.$
\end{corollary}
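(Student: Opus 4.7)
The plan is to prove the two assertions separately, using different ingredients. The second is essentially bookkeeping from Remark \ref{R-BasicRemark}, while the first requires a replacement argument that invokes Theorem \ref{Th-Constructible}.

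I would start with the second assertion. Fix $p \le r$ with $\s_p = D$ and $x_{i_p} = x_j$. The vertex $x_{i_p,c_p}$ is deleted at step $p$, so $x_{j,f} \in V(\h[\s,\bm x;r])$ already forces $f \neq c_p$. For $f < c_p$, the minimality in the definition of $c_p$ makes $x_{i_p,f}$ already isolated in $\h[\s,\bm x;p-1]$, and since isolation is preserved by both contraction and deletion, $x_{i_p,f}$ remains isolated in $\h[\s,\bm x;r]$, contradicting $x_{j,f} \in V((\h[\s,\bm x;r])^\circ)$. Hence $f > c_p$.

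For the first assertion I would argue by contradiction: suppose $x_j = x_{i_p}$ for some $p \le r$ with $\s_p = L$. By persistence of isolation it suffices to show that $x_{i_p,f}$ is removed or isolated in $\h[\s,\bm x;p]$ for every $f$. The cases $f = c_p$ (removed by the contraction) and $f < c_p$ (isolated by minimality of $c_p$) are immediate. In the decisive case $f > c_p$, I would pick any $E \in \e(\h[\s,\bm x;p-1])$ containing $x_{i_p,f}$, invoke Theorem \ref{Th-Constructible} to write $E = \{x_{j_1,f_1},\dots,x_{j_a,f_a}\}$ in constructible form (so $\{x_{j_1},\dots,x_{j_a}\} = F_k \setminus A_{p-1}$ for some $k$), and form $E'$ by replacing the unique coordinate $f_q = f$ at the index $q$ with $x_{j_q} = x_{i_p}$ by $c_p$. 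The key verification is that $E'$ remains constructible in $\h[\s,\bm x;p-1]$: the sum bound only improves (since $c_p < f$), the per-coordinate bounds at indices $q' \neq q$ are inherited from $E$, and at $q$, if $x_{i_p}$ was deleted at some earlier step $p'$, then the required inequality $c_p > c_{p'}$ is precisely Remark \ref{R-BasicRemark}(a).

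Once $E'$ is constructible, the converse of Theorem \ref{Th-Constructible} produces $E'' \in \e(\h[\s,\bm x;p-1])$ with $E'' \subseteq E'$. Since $x_{i_p,f} \in E \setminus E'$, we have $E'' \neq E$, and
$$E'' \setminus \{x_{i_p,c_p}\} \subseteq E' \setminus \{x_{i_p,c_p}\} = E \setminus \{x_{i_p,f}\} \subseteq E,$$
which disqualifies $E$ from $\e^\star$ upon contracting $x_{i_p,c_p}$. Constructibility forbids two copies of $x_{i_p}$ in a single edge, so $E \cup \{x_{i_p,c_p}\}$ is not an edge, forcing $E \notin \e_\star$ as well. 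Hence $E \notin \e(\h[\s,\bm x;p])$, and since any edge of $\h[\s,\bm x;p]$ containing $x_{i_p,f}$ with $f \neq c_p$ must lie in $\e^\star$ (edges of $\e_\star$ carry no copy of $x_{i_p}$), no such edge exists. The main hurdle is the constructibility check for $E'$ at the coordinate $q$, which is precisely why the strict inequality in Remark \ref{R-BasicRemark}(a) was put in place.
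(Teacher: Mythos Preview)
Your argument is correct, but it takes a substantially longer route than the paper does. The paper's proof is essentially two lines: since $x_{j,f}$ is non-isolated in $\h[\s,\bm x;r]$, it lies in some edge $E$ of $\h[\s,\bm x;r]$; Theorem~\ref{Th-Constructible} says $E$ is constructible at step $r$, and both conclusions ($x_j \notin A_r$ and $f > c_p$ when $x_j = x_{i_p} \in B_r \setminus A_r$) are literally clauses in the definition of constructibility, read off from the description $\{x_{j_1},\dots,x_{j_a}\} = F_k \setminus A_r$ together with the per-coordinate inequality.

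Your approach instead rewinds to the specific step $p$ at which $x_{i_p,c_p}$ was processed and argues that every $x_{i_p,f}$ becomes removed or isolated there, then invokes persistence of isolation to reach step $r$. For the deletion case this is genuinely more elementary than the paper's argument, since you avoid Theorem~\ref{Th-Constructible} entirely and use only the minimality of $c_p$. For the contraction case, however, you still invoke Theorem~\ref{Th-Constructible} (at step $p-1$) plus a replacement-and-containment argument to rule out membership in both $\e_\star$ and $\e^\star$; this is considerably more work than applying the same theorem once at step $r$. What your approach buys is a concrete mechanistic picture of \emph{why} all shadows of a contracted vertex disappear, which is instructive; what the paper's approach buys is brevity, since the bookkeeping you carry out by hand has already been absorbed into the proof of Theorem~\ref{Th-Constructible}.
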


\begin{proof}
    Since $x_{j,f} \in V((\h[\s,\bm x;r])^\circ)$, there exists an edge $E \in \e((\h[\s,\bm x;r])^\circ)$ such that $x_{j,f} \in E.$ Now, by using Theorem ~\ref{Th-Constructible}, $E$ is constructible in $\h[\s,\bm x;r].$ Thus we obtain the desired result.
\end{proof}

\section{Non-Pure skeleton complexes and componentwise linear ideals}

In this section, we prove the first main result of this article that if $\bar{\Delta}$ is a simplicial complex obtained from a simplicial complex $\Delta$ by attaching non-pure skeleton complexes to each vertex in a cycle cover of $\Delta$, then for each 
$\ell \in \mathbb{N}$, $J(\bar{\Delta})^{(\ell)}$ is componentwise linear. 
We start with the following definition.

\begin{defn}
    Let $\h$ be a hypergraph and $(\ell_1,\dots,\ell_t) \in \mathbb{N}^t_{>0}$, where $t$ is the number of edges in $\h.$ 
    \begin{enumerate}[(i)]
        \item The \emph{order induced by shadows} of vertices on $\h(\ell_1,\dots,\ell_t)$, denoted by `$\prec_s$', is the total order on $V(\h(\ell_1,\dots,\ell_t))$ defined by setting $x_{i,c} \prec_s x_{j,d}$ if either  (i) $c < d$, or (ii) $c = d$ and $i <j.$

        \item The \emph{order induced by index} of vertices on $\h(\ell_1,\dots,\ell_t)$, denoted by `$\prec_v$', is the total order on $V(\h(\ell_1,\dots,\ell_t))$ defined by setting
    $x_{i,c} \prec_v x_{j,d}$ if either  (i) $i < j$, or (ii) $i = j$ and $c < d.$ 
    \end{enumerate}
\end{defn}

Since $V(\h(\ell_1,\dots,\ell_t))$ is a finite set, it follows that $\prec_s$ and $\prec_v$ are well-ordering on $V(\h(\ell_1,\dots,\ell_t)).$ 

\begin{thm}\label{Th-Skeleton}
    Let $\Gamma$ be a simplex on the vertex set $V = \{x_1,\dots,x_n\}$ and $\h = \h(\Gamma^{(s)})$, where $s \in [n-1] \cup\{0\}.$ Then the hypergraph $\h(\ell)$ is vertex decomposable for all $\ell \in \mathbb{N}_{>0}.$
\end{thm}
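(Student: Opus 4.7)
The plan is to apply Lemma~\ref{L-VDtool} with $p = 0$ and trivial starting string $\p$, so $\Cla(\p,0) = \Cla$ and every infinite string must be handled. For each $\s \in \Cla$, I would construct a sequence $\bm x(\s)$ that processes the shadows of the single vertex $x_1$ in $\prec_s$ order following the letters of $\s$: at step $r$, set $\bm x(\s)_r = x_{1,c_r}$ with $c_r = \min\{c : x_{1,c} \in V((\h[\s,\bm x(\s); r-1])^\circ)\}$, and terminate at the first $L$ in $\s$ (to respect property $\mathfrak{P}$, which forbids two $L$-letters on the same original vertex) or when no shadow of $x_1$ remains alive. This canonical choice, which is exactly the one described in Remark~\ref{R-BasicRemark}, makes condition~(3) of Lemma~\ref{L-VDtool} transparent: the $r$-th term of $\bm x(\s)$ depends only on $\s_1, \dots, \s_{r-1}$ and on the hypergraph history. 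The overall induction is primarily on $n$ and secondarily on $s$.

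\textbf{Base cases.} When $s = 0$, every facet of $\Gamma^{(s)}$ is a singleton, so every edge of $\h(\ell)$ is trivial and $\h(\ell)$ is isolated, hence vertex decomposable by definition. When $n = s+1$, $\Gamma^{(s)} = \Gamma$ has a unique facet $F = \{x_1,\dots,x_{s+1}\}$ and $\h(\ell) = F(\ell)$; this is handled by an internal induction on $s$, running the same contraction/deletion scheme on $x_1$ and reducing to a single-edge structure of smaller dimension.

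\textbf{Inductive step.} For $n > s+1$, I verify condition~(2) of Lemma~\ref{L-VDtool} by analyzing the terminal hypergraph via Theorem~\ref{Th-Constructible} and Corollary~\ref{C-Constructible}: no constructible edge of $(\h[\s,\bm x(\s); \alpha(\s)])^\circ$ can contain a shadow of $x_1$, so the support is contained in $\{x_{j,c} : j \geq 2\}$. If $\s_r = D$ for all $r \in [\alpha(\s)]$, then every shadow of $x_1$ has been deleted and the terminal hypergraph equals $\h(\Gamma'^{(s)})(\ell)$, where $\Gamma'$ is the simplex on $\{x_2,\dots,x_n\}$; vertex decomposability follows from the outer induction on $n$. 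If the sequence terminates at the first $L$-step at index $r_0$, then the constructibility bound $|\mathbf{f}| \le \ell_{k,r_0} + a - 1$ specializes to $|\mathbf{f}| \le \ell + s - c_{r_0}$ for edges arising from original facets through $x_1$, producing a copy of $\h(\Gamma'^{(s-1)})(\ell - c_{r_0} + 1)$; the edges coming from original facets disjoint from $x_1$ acquire the additional shadow restriction $f_i < c_{r_0}$ and vanish when $c_{r_0} = 1$. The resulting hypergraph is vertex decomposable by the inner induction on $s$ (after peeling off an extra isolated-shadow layer if needed).

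\textbf{Main obstacle.} The heart of the argument is condition~(4): each $\bm x(\s)_r = x_{1,c_r}$ must be a shedding vertex of $(\h[\s,\bm x(\s); r-1])^\circ$. Given a face $F$ of $\link(x_{1,c_r})$ in the current independence complex, I would produce a face $F' \supsetneq F$ of $\del(x_{1,c_r})$ by replacing $x_{1,c_r}$ in $F \cup \{x_{1,c_r}\}$ with either a higher-indexed shadow $x_{1,c}$ for $c > c_r$ (when one remains available) or with some $x_{j,c'}$ with $j \ge 2$ still free. The symmetry of the $s$-skeleton of a simplex (every $(s+1)$-subset of $V$ is an edge of $\h$) together with Theorem~\ref{Th-Constructible} is what makes such swaps preserve independence. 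The technically most demanding step is tracking $A_r$, $B_r$, $d_{k,r}$, and $\ell_{k,r}$ through the combined deletion-then-contraction history and showing that the swapped vertex does not violate any constructibility constraint; this bookkeeping, more than any single conceptual leap, is where the proof's effort concentrates.
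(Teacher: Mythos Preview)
Your plan to process only the shadows of $x_1$ and then invoke induction on $n$ and $s$ does not work, and the paper actually points this out explicitly in the paragraph immediately following the proof of Theorem~\ref{Th-Skeleton}. The failure is in condition~(4) of Lemma~\ref{L-VDtool}: after deleting $x_{1,1}$, the next shadow $x_{1,2}$ is in general \emph{not} a shedding vertex of $(\h(\ell)\setminus x_{1,1})^\circ$. Concretely, take $n=3$, $s=1$, $\ell=2$; then $(\h(2)\setminus x_{1,1})^\circ$ has edges $\{x_{1,2},x_{2,1}\}$, $\{x_{1,2},x_{3,1}\}$, $\{x_{2,1},x_{3,1}\}$, $\{x_{2,1},x_{3,2}\}$, $\{x_{2,2},x_{3,1}\}$, and the face $U=\{x_{2,2},x_{3,2}\}$ lies in $\link(x_{1,2})$ but is already a facet of $\del(x_{1,2})$. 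The swap you describe---either to a higher shadow $x_{1,c}$ (none exists here) or to some $x_{j,c'}$ with $j\ge 2$---is blocked in both directions.

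The paper's proof avoids this by choosing $\bm x(\s)_r$ to be the $\prec_s$-minimum of the \emph{entire} active vertex set $V((\h[\s,\bm x(\s);r-1])^\circ)$, not just among shadows of $x_1$. This global minimality is exactly what drives the shedding argument: in Case~2 of the proof (where the offending edge $E$ comes from a facet $F_k$ not containing $x_{i_r}$), one swaps $x_{j_1}$ for $x_{i_r}$ in $F_k$ to get another facet $F_{k'}$, and the constructibility bound for the swapped set relies on $f_1\ge c_r$, which follows from $\prec_s$-minimality of $x_{i_r,c_r}$. Your sequence loses this inequality as soon as $r\ge 2$, since after deleting $x_{1,1}$ the $\prec_s$-minimum is $x_{2,1}$, not $x_{1,2}$. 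Because the paper's sequence ranges over all vertices, its terminal hypergraph is simply isolated and no induction on $n$ or $s$ is needed; your identification of the terminal hypergraph in the $L$-case as $\h(\Gamma'^{(s-1)})(\ell-c_{r_0}+1)$ plus residual edges with ``$f_i<c_{r_0}$'' is also incorrect (the residual edges satisfy a more restrictive system of inequalities and do not fit any clean inductive form), but this is moot given the shedding failure.
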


\begin{proof}
    For $s = 0$, $\h(\ell)$ is an isolated hypergraph, and hence it is vertex decomposable. Assume that $s \ge 1.$
    Let $\Gamma^{(s)} = \langle F_1,\dots,F_t \rangle$ and $(\ell_1,\dots,\ell_t) \in \mathbb{N}^t_{>0}$ be an ordered tuple with $\ell_k = \ell$ for all 
    $k \in [t].$ Suppose that $\s$ is an infinite string in $\Cla.$ 
    We define a sequence $(\bm x(\s,p) )_{p = 1}^{\infty}$, where $\bm x(\s,p)$ is a finite sequence of vertices in $\h(\ell)$ that satisfies the property $\mathfrak{P}$ with respect to $\s$, recursively as follows:
    For $p = 1$, define $$\bm x(\s,1) = (x_{1,1}).$$
    Now, let $p \ge 1$ be an integer and sequence $\bm x(\s,p)$ that satisfies the property $\mathfrak{P}$ with respect to $\s$ is defined. If  
    $V((\h[\s,\bm x(\s,p);p])^\circ) \ne \emptyset$, then we define 
    $\bm x(\s,p+1)$ as
    $$
    \begin{array}{ccl}
        {\bm x(\s,p+1)}_q & = &
        \left\{\begin{array}{lll}
        {\bm x(\s,p)}_q & ~\mbox{if}~ q \in [p];\\
        \min{(V((\h[\s,\bm x(\s,p);p])^\circ),\prec_s)} & ~\mbox{if}~ q = p+1.
        \end{array}\right.
    \end{array}
    $$
    On the other hand, if $V((\h[\s,\bm x(\s,p);p])^\circ) = \emptyset$, then we define $\bm x(\s,p') = \bm x(\s,p)$ for all $p' > p.$
    To see that $x(\s,p+1)$ satisfies the property $\mathfrak{P}$ with respect to $\s$, let $\de,\ep \in [p+1]$ with $\de < \ep$ and $\s_{\de} = \s_{\ep} = L.$ If 
    $x(\s,p+1) = x(\s,p)$, or $\ep < p+1$, then we are done. Now, let $x(\s,p+1) \ne x(\s,p)$ and $\ep = p+1.$ If $x(\s,p+1)_{\ep} = x_{j,f}$, then it follows from Corollary ~\ref{C-Constructible} that $x_j \notin A[\s,\bm x(\s,p);p]$, and hence $x(\s,p+1)$ satisfies the property $\mathfrak{P}$ with respect to $\s.$   
    
    Let $\al(\s)$ be the smallest integer such that $V((\h[\s,\bm x(\s,\al(\s));\al(\s)])^\circ) = \emptyset.$ Now, we set
    $$\bm x(\s) = \bm x(\s,\al(\s)).$$
    Note that $\bm x(\s)$ has $\al(\s)$ terms and it satisfies the property $\mathfrak{P}$ with respect to $\s.$ 
    We prove that the sequence $\bm x(\s)$ satisfies the hypothesis of Lemma ~\ref{L-VDtool} for $p = 0.$ Clearly, $\al(\s) \ge 0.$ Note that 
    $\h[\s,\bm x(\s);\al(\s)]$ is an isolated hypergraph since $V((\h[\s,\bm x(\s);\al(\s)])^\circ) = \emptyset.$ Thus $\h[\s,\bm x(\s);\al(\s)]$ is vertex decomposable. Let $0 \le p' < \al(\s)$ and $\s' \in \Cla(\s,p').$ If $\al(\s') \le p'$, then 
    $\bm x(\s',\al(\s')) = \bm x(\s,\al(\s'))$, and hence 
    $$\h[\s',\bm x(\s',\al(\s'));\al(\s')] = \h[\s,\bm x(\s,\al(\s'));\al(\s')].$$ 
    Now, the fact $V((\h[\s',\bm x(\s',\al(\s'));\al(\s')])^\circ) = \emptyset$ implies that $\al(\s') \ge \al(\s) > p'$, a contradiction. 
    Thus $p' <  \al(\s')$ for all $\s' \in \Cla(\s,p').$ Also, we have ${\bm x(\s)}_r = {\bm x(\s')}_r$ for all $r \in [p'+1].$
    
    Now, it is remains to prove that $\bm x(\s)_r$ is a shedding vertex of $(\h[\s,\bm x(\s);r-1])^\circ$ for all
    $r \in [\al(\s)].$ Note that  
    $$\bm x(\s)_r = \min{(V((\h[\s,\bm x(\s);r-1])^\circ),\prec_s)}.$$ 
    Let $r \in [\al(\s)]$ and $U \in \link_{\mathcal{D}}(\bm x(\s)_r)$, where $\mathcal{D} = \Delta((\h[\s,\bm x(\s);r-1])^\circ).$ 
    Suppose that $\bm x(\s)_r = x_{i_r,c_r}$ and $F \in \e((\h[\s,\bm x(\s);r-1])^\circ)$ with $x_{i_r,c_r} \in F.$ Then there exists $j \in [n]$ with $j \ne i_r$ such that $x_{j,c} \in F \setminus U$ for some $c \in [\ell].$ Let $f$ be the largest integer such that 
    $x_{j,f} \in V((\h[\s,\bm x(\s);r-1])^\circ) \setminus U.$ We claim that $U \cup \{x_{j,f}\} \in \mathcal{D}.$ On the contrary, suppose 
    $U \cup \{x_{j,f}\} \notin \mathcal{D}.$ Then there exists an edge $E = \{x_{j_1,f_1},\dots,x_{j_a,f_a}\} \in \e((\h[\s,\bm x(\s);r-1])^\circ)$ such that 
    $E \subset U \cup \{x_{j,f}\}.$ Since $U \in \mathcal{D}$, we see that $x_{j,f} \in E.$ Without loss of generality, we assume that $x_{j,f} = x_{j_1,f_1}.$ Write $A_{r-1} = A[\s,\bm x(\s);r-1]$ and $B_{r-1} = B[\s,\bm x(\s);r-1].$ Further, if $k \in [t]$, then we set $\ell_{k,r-1} = \max\{0,\ell_k - d_{k,r-1}\}$, where
    $$d_{k,r-1} = -|F_k \cap A_{r-1}|+\sum\limits_{\substack{x_{i_p} \in F_k \cap A_{r-1},\\x_{i_p,c_p} = {\bm x}_{p}}} c_p.$$
    Using Theorem ~\ref{Th-Constructible} that there exists $k \in [t]$ such that $\{x_{j_1},\dots,x_{j_a}\} = F_k \setminus A_{r-1}$ and $\mathbf{f} = (f_1,\dots,f_a) \in [\ell]^a$ with 
    $$f_q > c_p ~\mbox{if}~ x_{j_q} = x_{i_p} \in B_{r-1} \setminus A_{r-1} ~\mbox{and}~ |\mathbf{f}| \le \ell_{k,r-1}+a-1.$$ 
    Now, we have the following two cases.\\
	
    \noindent
    \textbf{Case 1.} When $x_{i_r} \in F_k.$ Without loss of generality, assume that $x_{i_r} = x_{j_2}.$ Then $f_2 > c_r \ge 1.$ Since
    $|\mathbf{f}| \le \ell_{k,r-1}+a-1 \le \ell+a-1$, we get $f = f_1 < \ell.$ Therefore, the set 
    $$\bar{E} = \{x_{j_1,f_1+1},x_{j_2,c_r},\dots,x_{j_a,f_a}\} \subset V(\h(\ell)).$$ Now, since 
    $|\mathbf{f}|-f_2+c_r+1 \le |\mathbf{f}| \le \ell_{k,r-1}+a-1$, it follows from Remark ~\ref{R-BasicRemark}(a) that $\bar{E}$ is constructible in 
    $\h[\s,\bm x(\s);r-1].$ By Theorem ~\ref{Th-Constructible}, there exists an edge $E'$ in $\h[\s,\bm x(\s);r-1]$ with $E' \subset \bar{E}.$ 
    Since $E \in \e(\h[\s,\bm x(\s);r-1])$, we must have $E' \cap \{x_{j_1,f_1+1},x_{j_2,c_r}\} \ne \emptyset.$ Suppose
    $E' \notin \e((\h[\s,\bm x(\s);r-1])^\circ).$ By Remark ~\ref{R-BasicRemark}(b), we get $E' = \{x_{j_1,f_1+1}\}.$ The fact that  $E'$ is a edge in $\h[\s,\bm x(\s);r-1]$ gives that $\{x_{j_1,f_1}\}$ is constructible in $\h[\s,\bm x(\s);r-1]$. Now by using Theorem ~\ref{Th-Constructible}, we get $\{x_{j_1,f_1}\}$ is an edge in $\h[\s,\bm x(\s);r-1].$ Therefore $E \notin \e((\h[\s,\bm x(\s);r-1])^\circ)$, a contradiction. Thus, we have 
    $E' \in \e((\h[\s,\bm x(\s);r-1])^\circ).$ Now, by maximality of $f$, we have $E' \subset U \cup \{x_{i_r,c_r}\}.$ This contradicts the fact that 
    $U \in \link_{\mathcal{D}}(\bm x(\s)_r).$\\
	
    \noindent
    \textbf{Case 2.} When $x_{i_r} \notin F_k.$ Since $\Gamma$ is a simplex, it follows that $(F_k \setminus \{x_{j_1}\}) \cup \{x_{i_r}\} = F_{k'}$ for some 
    $k' \in [t].$ In view of Corollary ~\ref{C-Constructible}, we have $F_{k'} \cap A_{r-1} = F_k \cap A_{r-1}$ and 
    $$F_{k'} \setminus A_{r-1} = \{x_{i_r},x_{j_2},\dots,x_{j_a}\}.$$
    Therefore, $\ell_{k',r-1} = \ell_{k,r-1}.$ Note that $f_1 \ge c_r.$ Thus $|\mathbf{f}|-f_1+c_r \le \ell_{k',r-1}+a-1$, and hence by Remark ~\ref{R-BasicRemark}(a),
    $\bar{E} = \{x_{i_r,c_r},x_{j_2,f_2},\dots,x_{j_a,f_a}\}$ is constructible in $\h[\s,\bm x(\s);r-1].$ By Theorem ~\ref{Th-Constructible}, there exists an edge $E'$ in $\h[\s,\bm x(\s);r-1]$ with $E' \subset \bar{E}.$ Since $E \in \e((\h[\s,\bm x(\s);r-1])^\circ)$, we get $x_{i_r,c_r} \in E'.$ In view of the Remark 
    ~\ref{R-BasicRemark}(b), we have $E' \in \e((\h[\s,\bm x(\s);r-1])^\circ).$ Now, the fact $E' \subset U \cup \{x_{i_r,c_r}\}$ gives us a contradiction.
    
    By above two cases, we get $U \cup \{x_{j,f}\} \in \mathcal{D}.$ Since $i_r \ne j$, we conclude that 
    $U \cup \{x_{j,f}\} \in \del_{\mathcal{D}}(x_{i_r,c_r}).$ Therefore, $x_{i_r,c_r}$ is a shedding vertex of 
    $(\h[\s,\bm x(\s);r-1])^\circ.$ This proves the theorem.
\end{proof}

Let $\Gamma$ be a simplex on the vertex set $V = \{x_1,x_2,x_3,x_4\}$ and $\h = \h(\Gamma^{(2)}).$ Then, as seen in the proof of Theorem ~\ref{Th-Skeleton}, $x_{1,1}$ is a shedding vertex of $\h(2).$ It is easy to see that $x_{1,2}$ is not a shedding vertex of $\h(2)\setminus x_{1,1}$ but hypergraphs 
$\h(2)\setminus (x_{1,1},x_{1,2})$ and $(\h(2)\setminus x_{1,1})/ x_{1,2}$ are vertex decomposable. This leads us to prove the following useful result.  

\begin{thm}\label{Th-GSkeleton}
    Let $\Gamma$ be a simplex on the vertex set $V = \{x_1,\dots,x_n\}$ and $\h = \h(\Gamma^{(s)})$, where $s \in [n-1].$ Suppose $\ell \in \mathbb{N}_{>0}$ and $\nu \in [\ell-1].$ Then the hypergraphs $\h(\ell)/x_{1,1}$, $(\h(\ell) \setminus (x_{1,1},\dots,x_{1,\nu}))/x_{1,\nu+1}$ and 
    $\h(\ell) \setminus (x_{1,1},\dots,x_{1,\ell})$ are vertex decomposable.
\end{thm}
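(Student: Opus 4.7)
The plan is to adapt the strategy of Theorem \ref{Th-Skeleton} by applying Lemma \ref{L-VDtool} with carefully chosen initial segments of the sequence $\bm x(\s)$.

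Part (i) comes essentially for free from the proof of Theorem \ref{Th-Skeleton}. That proof constructs, for every $\s \in \Cla$, a sequence $\bm x(\s)$ with $\bm x(\s)_1 = x_{1,1}$, and applies Lemma \ref{L-VDtool} at $p = 0$; its conclusion that $\h[\s, \bm x(\s); r]$ is vertex decomposable for every $r \ge 0$ specialises, for any $\s$ with $\s_1 = L$ at $r = 1$, to $\h(\ell)/x_{1,1}$ being vertex decomposable. Part (iii) I would handle directly. If $s = n - 1$, then $\Gamma^{(s)} = \Gamma$ has $V$ as its unique facet, every edge of $\h(\ell)$ contains some $x_{1,c}$, and deleting $x_{1,1}, \dots, x_{1,\ell}$ leaves an isolated hypergraph. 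If $s \le n - 2$, letting $\Gamma'$ denote the simplex on $V \setminus \{x_1\}$, the facets of $\Gamma^{(s)}$ that avoid $x_1$ are precisely the facets of $(\Gamma')^{(s)}$, and a direct check of vertex and edge sets yields $\h(\ell) \setminus (x_{1,1}, \dots, x_{1,\ell}) = \h((\Gamma')^{(s)})(\ell)$, which is vertex decomposable by Theorem \ref{Th-Skeleton}.

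For part (ii) I would apply Lemma \ref{L-VDtool} with $p = \nu + 1$ and $\p$ satisfying $\p_i = D$ for $i \in [\nu]$, $\p_{\nu+1} = L$. For every $\s \in \Cla(\p, \nu + 1)$, I would set $\bm x(\s)_i = x_{1,i}$ for $i \in [\nu + 1]$, then extend recursively for $r > \nu + 1$ by $\bm x(\s)_r = \min(V((\h[\s, \bm x(\s); r-1])^\circ), \prec_s)$, exactly as in Theorem \ref{Th-Skeleton}. By construction, $\h[\s, \bm x(\s); \nu + 1] = (\h(\ell) \setminus (x_{1,1}, \dots, x_{1,\nu}))/x_{1,\nu+1}$, so the conclusion of Lemma \ref{L-VDtool} at $r = \nu + 1$ with $\s = \p$ yields the desired statement. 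Conditions (1)--(3) of Lemma \ref{L-VDtool} hold by construction, essentially as in Theorem \ref{Th-Skeleton}.

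The main work, and the place I expect the main obstacle, is verifying condition (4): each $\bm x(\s)_r = x_{i_r, c_r}$ with $r > \nu + 1$ must be a shedding vertex of $(\h[\s, \bm x(\s); r-1])^\circ$. I would re-run the Case~1 / Case~2 swap argument from Theorem \ref{Th-Skeleton} with only bookkeeping changes. The crucial observations that survive the forced initial segment are that Corollary \ref{C-Constructible} gives $x_{i_r} \notin A_{r-1}$ (in particular $x_{i_r} \ne x_1$), and that the constructibility criterion forces $x_{j_1} \notin A_{r-1}$ for any $x_{j_1}$ extracted from a purported offending edge $E$. Consequently, the simplex swap $F_k \mapsto F_{k'} = (F_k \setminus \{x_{j_1}\}) \cup \{x_{i_r}\}$ still produces a facet of $\Gamma^{(s)}$ and does not touch $A_{r-1}$, so $F_{k'} \cap A_{r-1} = F_k \cap A_{r-1}$ and therefore $\ell_{k', r-1} = \ell_{k, r-1}$; the constructibility computations then carry through verbatim. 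The delicate bookkeeping around $A_{r-1}$, $B_{r-1}$, and $\ell_{k, r-1}$ after the forced initial segment is the only genuinely new ingredient beyond the proof of Theorem \ref{Th-Skeleton}.
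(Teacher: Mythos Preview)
Your proposal is correct and, for parts (ii) and (iii), follows the paper's proof essentially verbatim: the paper handles $(\h(\ell)\setminus(x_{1,1},\dots,x_{1,\nu}))/x_{1,\nu+1}$ by exactly the construction you describe (forced initial segment $(x_{1,1},\dots,x_{1,\nu+1})$, then $\prec_s$-minimum thereafter, with Lemma~\ref{L-VDtool} at $p=\nu+1$), and handles $\h(\ell)\setminus(x_{1,1},\dots,x_{1,\ell})$ by the same reduction to $(\Gamma')^{(s)}$ that you give.

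The one genuine difference is your treatment of $\h(\ell)/x_{1,1}$. You extract it as the $r=1$ case (with $\s_1=L$) of the full conclusion of Lemma~\ref{L-VDtool} already established in the proof of Theorem~\ref{Th-Skeleton}; the paper instead argues directly that $(\h(\ell)/x_{1,1})^\circ = (\h'(\ell))^\circ$ with $\h' = \h((\Gamma')^{(s-1)})$, and then invokes Theorem~\ref{Th-Skeleton}. Your route is slicker and avoids verifying that structural identification, while the paper's route has the virtue of making the combinatorics of the contraction explicit. Both are short and either is acceptable.
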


\begin{proof}
    Let $\Gamma'$ be a simplex on the vertex set $\{x_2,\dots,x_n\}$, $\h' = \h({\Gamma'}^{(s-1)})$ and $\h'' = \h({\Gamma'}^{(s)}).$ Then 
    $(\h(\ell)/x_{1,1})^\circ = (\h'(\ell))^\circ$ and $(\h(\ell) \setminus (x_{1,1},\dots,x_{1,\ell}))^\circ \simeq (\h''(\ell))^\circ.$ By Theorem ~\ref{Th-Skeleton}, it follows that the hypergraphs $\h(\ell)/x_{1,1}$ and $\h(\ell) \setminus (x_{1,1},\dots,x_{1,\ell})$ are vertex decomposable.
    Now, we prove that $(\h(\ell) \setminus (x_{1,1},\dots,x_{1,\nu}))/x_{1,\nu+1}$ is a vertex decomposable hypergraph.
    Let $\Gamma^{(s)} = \langle F_1,\dots,F_t \rangle$ and $(\ell_1,\dots,\ell_t) \in \mathbb{N}^t_{>0}$ be an ordered tuple with $\ell_k = \ell$ for all 
    $k \in [t].$ Suppose that $\p$ is an infinite string in $\Cla$ with $\p_r = D$ for $r \in [\nu]$ and $\p_{\nu+1} = L$ and $\s \in \Cla(\p,\nu+1).$ 
    We define a sequence $(\bm x(\s,p) )_{p = 1}^{\infty}$, where $\bm x(\s,p)$ is a finite sequence of vertices in $\h(\ell)$ that satisfies the property $\mathfrak{P}$ with respect to $\s$, recursively as follows:
    For $p = 1$, define	$$\bm x(\s,1) = (x_{1,1},\dots,x_{1,\nu},x_{1,\nu+1}).$$
    Now, let $p \ge 1$ be an integer and sequence $\bm x(\s,p)$ that satisfies the property $\mathfrak{P}$ with respect to $\s$ is defined. If 
    $V((\h[\s,\bm x(\s,p);\nu+p])^\circ) \ne \emptyset$, then we define 
    $\bm x(\s,p+1)$ as
    $$
    \begin{array}{ccl}
        {\bm x(\s,p+1)}_q & = &
        \left\{\begin{array}{lll}
        {\bm x(\s,p)}_q & ~\mbox{if}~ q \in [\nu+p];\\
        \min{(V((\h[\s,\bm x(\s,p);\nu+p])^\circ),\prec_s)} & ~\mbox{if}~ q = \nu+p+1.
        \end{array}\right.
    \end{array}
    $$
    On the other hand, if $V((\h[\s,\bm x(\s,p);\nu+p])^\circ) = \emptyset$, then we define $\bm x(\s,p') = \bm x(\s,p)$ for all $p' > p.$
    We prove that $x(\s,p+1)$ satisfies the property $\mathfrak{P}$ with respect to $\s.$ For this, let $\de,\ep \in [\nu+p+1]$ with $\de < \ep$ and 
    $\s_{\de} = \s_{\ep} = L.$ If $x(\s,p+1) = x(\s,p)$, or 
    $\ep < \nu+p+1$, then we are done. Now, let $x(\s,p+1) \ne x(\s,p)$ and $\ep = \nu+p+1.$ If $x(\s,p+1)_{\ep} = x_{j,f}$, then by using Corollary ~\ref{C-Constructible}, we get $x_j \notin A[\s,\bm x(\s,p);\nu+p].$ Thus, $x(\s,p+1)$ satisfies the property $\mathfrak{P}$ with respect to $\s.$
    
    Let $\gamma(\s)$ be the smallest integer such that $V((\h[\s,\bm x(\s,\gamma(\s));\nu+\gamma(\s)])^\circ) = \emptyset.$ Now, we set
    $$\al(\s) = \nu+\gamma(\s),$$ and $$\bm x(\s) = \bm x(\s,\gamma(\s)).$$
    The sequence $\bm x(\s)$ has $\al(\s)$ terms and it satisfies the property $\mathfrak{P}$ with respect to $\s.$ Note that 
    $\al(\s) \ge \nu+1$ and $\h[\s,\bm x(\s);\nu+1] = (\h(\ell) \setminus (x_{1,1},\dots,x_{1,\nu}))/x_{1,\nu+1}.$ Now using the similar argument given in the proof of Theorem ~\ref{Th-Skeleton}, we observe that the sequence $\bm x(\s)$ satisfies the hypothesis of Lemma ~\ref{L-VDtool}, and hence 
    $\h[\s,\bm x(\s);r]$ is a vertex decomposable hypergraph for all $r \ge \nu+1.$ In particular, $(\h(\ell) \setminus (x_{1,1},\dots,x_{1,\nu}))/x_{1,\nu+1}$ is a vertex decomposable hypergraph.
\end{proof}
Now, we fix some notation for the rest of the section.

\begin{set}\label{SetupMain}
    Let $\Delta =\langle F_1,\dots,F_s\rangle$ be any simplicial complex on the vertex set $V=\{x_1,\dots,x_n\}$ and $W$ be a cycle cover of $\Delta$. Without loss of generality, we assume that $W = \{x_1,\dots,x_m\}.$ Let $$\bar{\Delta} = \Delta(\Gamma(x_1), \dots, \Gamma(x_m))$$ be the simplicial complex obtained from $\Delta$ by attaching non-pure skeleton complex $\Gamma(x_i)$ to $\Delta$ at vertex $x_i$ for all $i\in [m].$ For each $i \in [m]$, let 
    $$\Gamma(x_i) = \Gamma_{i,1}^{(s_{i,1})}\cup \cdots \cup\Gamma_{i,\et_i}^{(s_{i,\et_i})},$$ where $\Gamma_{i,j}$ is a simplex and 
    $s_{i,j} \in [\dim{\Gamma_{i,j}}].$ Since each $\Gamma(x_i)$ is a non-pure skeleton complex, there exists $j \in [\et_i]$ such that 
    $s_{i,j} = \dim{\Gamma_{i,j}}.$ Without loss of generality, we may assume that for each $i \in [m]$, $s_{i,1} = \dim{\Gamma_{i,1}}.$   
    Further, for each $i \in [m]$, let $$\Gamma_{i,1} = \langle F_{s+i} = \{x_i,x_{n_{i-1}+1},\dots,x_{n_i}\} \rangle$$ with $n = n_0 < n_1 < \cdots < n_m.$ Then, we can write $$\bar{\Delta} =\langle F_1,\dots,F_s,F_{s+1},\dots,F_{s+m},\dots,F_t \rangle,$$ where for each $k \ge s+m+1$, $F_k$ is a facet of $\Gamma(x_i)$ for some $i \in [m].$ 
	
    Let $\Upsilon_i =\{k \in [t]: x_i \in F_k\}$ for $i \in [m].$ We say that a tuple $(\ell_1,\dots,\ell_t)\in \mathbb{N}_{>0}^t$ satisfies \emph{condition $(\star)$} if for every $i \in [m]$, $\ell_{s+i} \ge \ell_k$ for all $k \in \Upsilon_i.$ Now, let $\h = \h(\bar{\Delta})$ and 
    $(\ell_1,\dots,\ell_t) \in \mathbb{N}_{>0}^t$ satisfies condition $(\star).$ For each infinite string $\s$ in $\Cla$, we define a finite sequence 
    $\bm x(\s)$ of vertices in $\h(\ell_1,\dots,\ell_t).$ For this, we define a sequence $(\bm x(\s,p) )_{p = 1}^{\infty}$, where $\bm x(\s,p)$ is a finite sequence of vertices in $\h(\ell_1,\dots,\ell_t)$, recursively as follows: For $p = 1$,	define 
    $\bm x(\s,1) = (x_{1,1}).$ Now, let $p \ge 1$ be an integer and sequence $\bm x(\s, p)$ is defined. Consider the set 
    $$V[\s;p] = \{x_{i,c} \in V((\h[\s,\bm x(\s,p);p])^\circ): x_i \in W\}.$$
    If $V[\s;p] \ne \emptyset$, then we define
    $\bm x(\s,p+1)$ as
    $$
    \begin{array}{ccl}
        {\bm x(\s,p+1)}_q & = &
        \left\{\begin{array}{lll}
        {\bm x(\s,p)}_q & ~\mbox{if}~ q \in [p];\\
        \min{(V[\s;p],\prec_v)} & ~\mbox{if}~ q = p+1.
        \end{array}\right.
    \end{array}
    $$
    On the other hand, if $V[\s;p] = \emptyset$, then we define $\bm x(\s,p') = \bm x(\s,p)$ for all $p' > p.$ Let $\al(\s)$ be the smallest integer such that $V[\s;\al(\s)] = \emptyset.$ We define $$\bm x(\s) = \bm x(\s,\al(\s)).$$
    Note that $\bm x(\s,p)$ has $p$ terms for each $p \in [\al(\s)].$ Thus, in particular, $\bm x(\s)$ has $\al(\s)$ terms. For each $r \ge 0$, we write
    $A_r = A[\s,\bm x(\s);r]$ and $B_r = B[\s,\bm x(\s);r].$ Further, if $r \in \mathbb{N}$ and $k \in [t]$, then we set $\ell_{k,r} = \max\{0,\ell_k - d_{k,r}\}$, where
    $$d_{k,r} = -|F_k \cap A_r|+\sum\limits_{\substack{x_{i_p} \in F_k \cap A_r,\\x_{i_p,c_p} = {\bm x}_{p}}} c_p.$$
\end{set}

\begin{rmk}\label{R-MainRemark}
    With the notation of Set-up ~\ref{SetupMain}, 
    \begin{enumerate}[(a)] 
		\item Let $k \in [t].$ Then either $F_k \subset V$, or $F_k \cap V \subset \{x_i\}$ for some $i \in [m].$ Further, if  
        $$F_k \cap \{x_{n_{i-1}+1},\dots,x_{n_i}\} \ne \emptyset$$ for some $i \in [m]$, then $k = s+i.$  
		
		\item If $r \ge \al(\s)$, then $\bm x(\s,r) = \bm x(\s,\al(\s))$ and $\h[\s,\bm x(\s,r);r] = \h[\s,\bm x(\s,\al(\s));\al(\s)].$
        Therefore, we get $V[\s;r] = V[\s;\al(\s)] = \emptyset.$

        \item We show that the sequence $\bm x(\s,p)$ satisfies the property $\mathfrak{P}$ with respect to $\s$ for all $p \ge 1$ by using induction on $p.$ The result holds for $p = 1.$ Now, let $p \ge 1$ and the sequence $\bm x(\s,p)$ satisfies the property $\mathfrak{P}$ with respect to $\s.$ Suppose that 
        $\de,\ep \in [p+1]$ with $\de < \ep$ and $\s_{\de} = \s_{\ep} = L.$ If $\bm x(\s,p+1) = \bm x(\s,p)$, or $\ep < p+1$, then by induction hypothesis, 
        $\bm x(\s,p+1)$ satisfies the property $\mathfrak{P}$ with respect to $\s.$ Now, let $\bm x(\s,p+1) \ne \bm x(\s,p)$ and $\ep = p+1.$ 
        Then $$\bm x(\s,p+1)_{\ep} = \min(V[\s;p],\prec_v).$$ If $\bm x(\s,p+1)_{\ep} = x_{j,f}$, then it follows from Corollary ~\ref{C-Constructible} that 
        $x_j \notin A[\s,\bm x(\s,p);p].$ Therefore, $\bm x(\s,p+1)$ satisfies the property $\mathfrak{P}$ with respect to $\s.$ In particular, $\bm x(\s)$ satisfies the property $\mathfrak{P}$ with respect to $\s.$

        \item Let $0 \le p' < \al(\s)$ and $\s' \in \Cla(\s,p').$ If $\al(\s') \le p'$, then 
        $\bm x(\s',\al(\s')) = \bm x(\s,\al(\s'))$, and hence 
        $$\h[\s',\bm x(\s',\al(\s'));\al(\s')] = \h[\s,\bm x(\s,\al(\s'));\al(\s')].$$ 
        Now, the fact $V[\s';\al(\s')] = \emptyset$ implies that $\al(\s') \ge \al(\s) > p'$, a contradiction. 
        Thus $p' <  \al(\s')$ for all $\s' \in \Cla(\s,p').$ Also, we have ${\bm x(\s)}_r = {\bm x(\s')}_r$ for all $r \in [p'+1].$
        
        \item Let $\bm x(\s) = (x_{i_q,c_q})_{q = 1}^{\al(\s)}.$ Then $c_q = \min\{c:x_{i_q,c} \in V((\h[\s,\bm x(\s);q-1])^\circ)\}$ and 
        $i_q = \min\{i: x_i \in W, x_{i,c} \in V((\h[\s,\bm x(\s);q-1])^\circ) ~\mbox{for some}~ c \ge 1\}.$ Further, $x_{i_q} \in W$ for all $q \in [\al(\s)]$, and hence 
        $A_q, B_q \subset W$ for all $q \ge 1.$ 
    \end{enumerate}
\end{rmk}

\begin{lem}\label{L-CHSX}
    With the notation of Set-up ~\ref{SetupMain}, let $\bm x = (x_{i_q,c_q})_{q = 1}^{\al(\s)}$ and $I = \{i_r \in [m]: r \in [\al(\s)]\}.$ Further, for $i \in I$, let $\mu_i = \min{C_i}$ and $\nu_i+1 = \max{C_i}$, where
    $$C_i = \{c:x_{i,c} = x_{i_r,c_r} ~\mbox{for some}~ r \in [\al]\}.$$
    \begin{enumerate}
        \item For each $\mu_i \le c \le \nu_i+1$, $c \in C_i.$

        \item If $x_{i,\mu_i} = x_{i_r,c_r}$ and $\mu_i < \nu_i+1$, then $\s_q = D$ for all $r \le q \le r+\nu_i-\mu_i.$ 
    \end{enumerate}
\end{lem}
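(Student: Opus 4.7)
The plan is to analyze the sequence $\bm x(\s)$ class by class. Fix $i \in I$ and enumerate the positions at which class $i$ is picked as $r_1 < r_2 < \cdots < r_k$, so that $C_i = \{c_{r_1}, \ldots, c_{r_k}\}$. I will establish: (a) the positions are consecutive, $r_{j+1} = r_j + 1$ for $j < k$; (b) $\s_{r_j} = D$ for $j < k$; and (c) $c_{r_{j+1}} = c_{r_j} + 1$ for $j < k$. Together these give $C_i = \{\mu_i, \mu_i+1, \ldots, \nu_i + 1\}$ (part~1), and $\s_q = D$ on the block $r_1 \le q \le r_1 + \nu_i - \mu_i = r_{k-1}$ (part~2). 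For (a), use Remark~\ref{R-MainRemark}(e): at each step we pick the vertex of smallest first index in $V[\s;q]$; since $\setminus$ and $/$ only remove vertices, the set of available first indices is non-increasing, so once we leave class $i$ we cannot return, and once classes $< i$ are exhausted they stay exhausted. Hence the class-$i$ positions form a consecutive block.

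For (b), suppose for contradiction that $\s_{r_j} = L$ for some $j < k$, so $x_i \in A_{r_j}$. By Theorem~\ref{Th-Constructible}, every edge of $\h[\s, \bm x; r_j]$ has first-index set $F_{k'} \setminus A_{r_j}$ for some $k'$; since $x_i \in A_{r_j}$, this excludes $x_i$ and hence no edge contains any class-$i$ vertex. So every class-$i$ vertex is isolated in $\h[\s, \bm x; r_j]$, and $V[\s; r_j]$ contains no class-$i$ vertex, contradicting $i_{r_{j+1}} = i$.

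For (c), Remark~\ref{R-BasicRemark}(a) applied to the $D$ step at $r_j$ (justified by (b)) already gives $c_{r_{j+1}} > c_{r_j}$. To obtain equality I will exhibit a constructible edge of $\h[\s, \bm x; r_j]$ containing $x_{i, c_{r_j}+1}$; Theorem~\ref{Th-Constructible} then yields an actual edge, so $x_{i, c_{r_j}+1}$ is non-isolated and (by the $\prec_v$-minimality rule) is picked at position $r_{j+1}$. The critical edge comes from the facet $F_{s+i} = \{x_i, x_{n_{i-1}+1}, \ldots, x_{n_i}\}$ of the non-pure skeleton $\Gamma(x_i)$ attached at $x_i$. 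Its whisker vertices lie outside $W$, hence outside $A_{r_j} \cup B_{r_j}$, and (b) gives $x_i \notin A_{r_j}$; thus $F_{s+i} \setminus A_{r_j} = F_{s+i}$ and $d_{s+i, r_j} = 0$, so $\ell_{s+i, r_j} = \ell_{s+i}$. The tuple $\mathbf{f} = (c_{r_j}+1, 1, \ldots, 1)$ satisfies $f_q > c_p$ for every class-$i$ entry of $B_{r_j}$ (these are $c_{r_1}, \ldots, c_{r_j}$, maximized at $c_{r_j}$) and the size bound $|\mathbf{f}| = c_{r_j}+a \le \ell_{s+i}+a-1$ holds iff $c_{r_j}+1 \le \ell_{s+i}$, which must hold or else (by condition $(\star)$) $x_{i, c_{r_j}+1}$ is not a vertex of $\h(\ell_1, \ldots, \ell_t)$, forcing $j = k$.

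The main obstacle is the simultaneous bookkeeping in (b) and (c): step (c) needs $x_i \notin A_{r_j}$ and the precise value $\ell_{s+i, r_j} = \ell_{s+i}$, both of which depend on (b) being in hand. I would therefore organize the argument as a joint induction on $j$ to keep the dependencies consistent. The non-pure assumption on $\Gamma(x_i)$ (i.e., $s_{i,1} = \dim\Gamma_{i,1}$) is essential, as it supplies the full facet $F_{s+i}$ whose non-$x_i$ vertices lie outside $W$ and whose parameter $\ell_{s+i}$ dominates all other $\ell_k$, $k\in\Upsilon_i$, under condition $(\star)$.
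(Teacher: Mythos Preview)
Your overall strategy is sound and runs parallel to the paper's, with one genuine difference in step~(c): to witness that $x_{i,c_{r_j}+1}$ is non-isolated, the paper instead uses that $x_{i,\nu_i+1}$ is (by definition of $\nu_i$) selected at some later step, hence already lies in $V((\h[\s,\bm x;r_j])^\circ)$ by~\eqref{EQ1}; it then takes any edge through $x_{i,\nu_i+1}$ and lowers its $x_i$-coordinate to $c$. Your route---building the witness directly from the special facet $F_{s+i}$---essentially anticipates Lemma~\ref{L-Edge}. Both approaches work; the paper's is slightly more economical here since it avoids invoking the structure of $F_{s+i}$ and condition~$(\star)$.

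There is, however, a real gap in your step~(c). Theorem~\ref{Th-Constructible} only yields an edge $E'\subseteq E$ contained in your constructible set; it does not assert $E'=E$, nor that $x_{i,c_{r_j}+1}\in E'$, nor that $\{x_{i,c_{r_j}+1}\}$ fails to be an edge. The last point matters: by the paper's definition, a vertex $x$ with $\{x\}\in\e$ is \emph{isolated}, so from ``there is an edge below $E$'' you cannot jump to ``$x_{i,c_{r_j}+1}$ is non-isolated''. Two things must be checked. First, Remark~\ref{R-MainRemark}(a) forces either $k'=s+i$ (whence $E'=E$) or $F_{k'}\setminus A_{r_j}\subseteq\{x_i\}$, i.e.\ $E'=\{x_{i,c_{r_j}+1}\}$; so in all cases $x_{i,c_{r_j}+1}\in E'$. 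Second, the singleton case must be excluded: if $\{x_{i,c_{r_j}+1}\}$ were an edge, then since $\s_{r_j}=D$ (your step~(b)) gives $A_{r_j}=A_{r_j-1}$ and hence $\ell_{k',r_j}=\ell_{k',r_j-1}$, and since $c_{r_j}>c_p$ for every earlier class-$i$ deletion (Remark~\ref{R-BasicRemark}(a)), the set $\{x_{i,c_{r_j}}\}$ is constructible in $\h[\s,\bm x;r_j-1]$, hence an edge there by Theorem~\ref{Th-Constructible}---contradicting $x_{i,c_{r_j}}\in V((\h[\s,\bm x;r_j-1])^\circ)$. This is exactly how the paper closes the analogous gap both in this lemma and in Lemma~\ref{L-Edge}; once you add it, your argument goes through.
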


\begin{proof} 
    (1) If $\mu_i = \nu_i+1$, then we are done. Therefore, we suppose that $\mu_i < \nu_i+1.$
    Note that $\mu_i \in C_i.$ Suppose $\mu_i < c \le \nu_i+1$ and $c-1 \in C_i.$ Choose $r \in [\al]$ such that $x_{i,c-1} = x_{i_r,c_r}.$
    It is suffices to prove that $x_{i,c} \in V((\h[\s,\bm x(\s);r])^\circ).$ For $c' \in C_i$ with $\mu_i < c'$, let $x_{i,c'} = x_{i_{r'},c_{r'}}.$
    Then we observe that $r < r'.$ Since 
    $$V((\h[\s,\bm x(\s);r+\nu_i-\mu_i])^\circ) \subset V((\h[\s,\bm x(\s);r])^\circ),$$
    we get $x_{i,\nu_i+1} \in V((\h[\s,\bm x(\s);r])^\circ).$
    Let $E = \{x_{j_1,f_1},\dots,x_{j_a,f_a}\}\in \e((\h[\s,\bm x(\s);r])^\circ)$ be such that $x_{i,\nu_i+1} \in E.$ Using Theorem ~\ref{Th-Constructible}, there exists 
    $k \in [t]$ such that $\{x_{j_1},\dots,x_{j_a}\} = F_k \setminus A_r$ and $\mathbf{f} = (f_1,\dots,f_a) \in [\ell_k]^a$ with 
    $$f_q > c_p ~\mbox{if}~ x_{j_q} = x_{i_p} \in B_r \setminus A_r ~\mbox{and}~ |\mathbf{f}| \le \ell_{k,r}+a-1.$$ Assume that $x_{i,\nu_i+1} = x_{j_1,f_1}.$ Then $i = j_1$ and $\nu_i+1 = f_1.$ Consider the set $$F = \{x_{j_1,c},x_{j_2,f_2},\dots,x_{j_a,f_a}\}.$$
    Suppose that $x_{j_1} = x_{i_p} \in B_r \setminus A_r.$ If $p = r$, then $c_p = c_r = c-1$, which implies $c > c_p.$ On the other hand if $p < r$, then $x_{j_1} = x_{i_p} \in B_{r-1} \setminus A_{r-1}.$ Since $x_{i_r,c_r} \in V((\h[\s,\bm x(\s);r])^\circ)$, it follows from Corollary ~\ref{C-Constructible} that 
    $c > c-1 > c_p.$ Now, the fact $|\mathbf{f}|-f_1+c \le |\mathbf{f}| \le \ell_{k,r}+a-1$ implies that $F$ is constructible in $\h[\s,\bm x(\s);r].$ Again, using Theorem ~\ref{Th-Constructible}, there exist an edge $F' \in \e(\h[\s,\bm x(\s);r])$ such that $F' \subset F.$ Since $F',E \in \e(\h[\s,\bm x(\s);r])$, we get 
    $F' \not \subset E$, and hence $x_{i,c} \in F'.$ Further, if $F' = \{x_{i,c}\}$, then by using Theorem ~\ref{Th-Constructible}, there exists $k' \in [t]$ such that 
    $\{x_i\} = F_{k'} \setminus A_r$ and $c \in [\ell_{k',r}].$  Moreover, since $x_{i,\nu_i+1} \in V((\h[\s,\bm x(\s);r])^\circ)$, in view of Corollary ~\ref{C-Constructible},
    we see that $x_i \notin A_r.$ Thus, $A_r = A_{r-1}$, and hence $\ell_{k',r} = \ell_{k',r-1}.$ This implies that $\{x_{i,c-1}\}$ is constructible in $\h[\s,\bm x(\s);r-1].$ Therefore, by Theorem ~\ref{Th-Constructible}, we obtain
    $\{x_{i,c-1}\} \in \e(\h[\s,\bm x(\s);r-1]).$ This implies that $x_{i,c-1} \notin V((\h[\s,\bm x(\s);r-1])^\circ)$, a contradiction. Therefore, we get
    $\{x_{i,c}\} \subsetneq F'$ which implies that $x_{i,c} \in V((\h[\s,\bm x(\s);r])^\circ).$ 
    
    (2) Let $r \le q \le r+\nu_i-\mu_i.$ Using part (1), for each $r \le p \le r+\nu_i-\mu_i+1$, we have $i_p = i.$ Now, since 
    $x_{i_{q+1},c_{q+1}} \in V((\h[\s,\bm x(\s);q])^\circ)$, it follows from Corollary ~\ref{C-Constructible} that $x_{i_{q+1}} \notin A_q$, that is, $x_q \notin A_q.$ This implies that $\s_q = D.$
     
\end{proof}

\begin{lem}\label{L-ImConstructible}
    With the notation of Set-up ~\ref{SetupMain},
    let $r \ge \al(\s)$ and $E  = \{x_{j_1,f_1},\dots,x_{j_a,f_a}\}$ is an edge in $(\h[\s,\bm x(\s);r])^\circ.$ Then there exists $k \in [t]$ such that 
    $\{x_{j_1},\dots,x_{j_a}\} = F_k \setminus W$ and 
    $\mathbf{f} =(f_1,\dots,f_a) \in [\ell_k]^a$ with 
    $|\mathbf{f}| \le \ell_{k,r}+a-1.$
\end{lem}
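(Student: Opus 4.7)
The lemma is essentially a direct consequence of Theorem~\ref{Th-Constructible} combined with the defining property of $\alpha(\s)$ in Set-up~\ref{SetupMain}. The plan is as follows.

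Since $E \in \e((\h[\s,\bm x(\s);r])^\circ) \subseteq \e(\h[\s,\bm x(\s);r])$, Theorem~\ref{Th-Constructible} applied to the sequence $\bm x(\s)$ tells me that $E$ is constructible in $\h[\s,\bm x(\s);r]$. Hence, there exists $k \in [t]$ such that $\{x_{j_1},\ldots,x_{j_a}\} = F_k \setminus A_r$, the tuple $\mathbf{f} \in [\ell_k]^a$, and $|\mathbf{f}| \le \ell_{k,r}+a-1$, with the additional ``$f_q > c_p$'' condition for indices lying in $B_r \setminus A_r$. The size inequality is already exactly what the lemma asks for, so all that remains is to upgrade the equality $\{x_{j_1},\ldots,x_{j_a}\} = F_k \setminus A_r$ to $\{x_{j_1},\ldots,x_{j_a}\} = F_k \setminus W$.

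For this upgrade, I would use the stopping criterion of the algorithm. Since $r \ge \alpha(\s)$, Remark~\ref{R-MainRemark}(b) gives $V[\s;r] = V[\s;\alpha(\s)] = \emptyset$; that is, no vertex $x_{i,c} \in V((\h[\s,\bm x(\s);r])^\circ)$ can satisfy $x_i \in W$. Because $E \subseteq V((\h[\s,\bm x(\s);r])^\circ)$, this forces $x_{j_p} \notin W$ for every $p \in [a]$, so $(F_k \setminus A_r) \cap W = \emptyset$, i.e.\ $F_k \cap W \subseteq A_r$. Combined with $A_r \subseteq W$ from Remark~\ref{R-MainRemark}(e), one obtains $F_k \cap A_r = F_k \cap W$, and therefore $F_k \setminus A_r = F_k \setminus W$, as desired.

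I do not expect any real obstacle here: the argument is essentially a bookkeeping exercise that converts Theorem~\ref{Th-Constructible} into the form needed for the subsequent vertex-decomposability argument on $\bar{\Delta}$. The only conceptual point worth emphasising is that the algorithm in Set-up~\ref{SetupMain} is engineered precisely to keep processing $W$-vertices until none remain in the live hypergraph, and it is exactly this stopping condition that permits the replacement of $A_r$ by the (a priori larger) cycle cover $W$ in the conclusion.
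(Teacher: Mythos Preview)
Your proposal is correct and follows essentially the same route as the paper's own proof: apply Theorem~\ref{Th-Constructible} to obtain $\{x_{j_1},\dots,x_{j_a}\}=F_k\setminus A_r$ and the bound $|\mathbf{f}|\le \ell_{k,r}+a-1$, then use $V[\s;r]=\emptyset$ (Remark~\ref{R-MainRemark}(b)) to conclude that no $x_{j_p}$ lies in $W$, and combine this with $A_r\subseteq W$ (Remark~\ref{R-MainRemark}(e)) to upgrade $F_k\setminus A_r$ to $F_k\setminus W$. The only cosmetic difference is that the paper phrases the last set-theoretic step as $F_k\setminus W\subseteq F_k\setminus A_r$ together with $(F_k\setminus A_r)\cap W=\emptyset$, which is equivalent to your formulation via $F_k\cap A_r=F_k\cap W$.
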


\begin{proof}
    Let $\bm x(\s) = (x_{i_q,c_q})_{q = 1}^{\al(\s)}.$ 
    By applying Theorem ~\ref{Th-Constructible}, there exists $k \in [t]$ such that $\{x_{j_1},\dots,x_{j_a}\} = F_k \setminus A_r$ and 
    $\mathbf{f} = (f_1,\dots,f_a) \in [\ell_k]^a$ with 
    $$f_q > c_p ~\mbox{if}~ x_{j_q} = x_{i_p} \in B_r \setminus A_r ~\mbox{and}~ |\mathbf{f}| \le \ell_{k,r}+a-1.$$ 
    Further, by Remark ~\ref{R-MainRemark}(b), we have $V[\s;r] = \emptyset.$ Since $x_{j_q,f_q} \in V((\h[\s,\bm x(\s,r);r])^\circ)$ for all $q \in [a]$, we get 
    $x_{j_q} \notin W$ for all $q \in [a].$ Now, Remark ~\ref{R-MainRemark}(e) implies that $F_k \setminus W \subset F_k \setminus A_r$, and hence 
    $F_k \setminus W = F_k \setminus A_r$ and $(F_k \setminus A_r) \cap B_r = \emptyset.$ This proves the lemma.
\end{proof}

\begin{lem}\label{L-Edge}
    With notation as in Set-up ~\ref{SetupMain}, let $\bm x(\s) = (x_{i_q,c_q})_{q = 1}^{\al(\s)}$, $r \in [\al(\s)]$, and $i = i_r.$ Then
    $$E = \{x_{i,c_r},x_{n_{i-1}+1,1},\dots,x_{n_i,1}\} \in \e((\h[\s,\bm x(\s);r-1])^\circ).$$
\end{lem}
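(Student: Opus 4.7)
The plan is to exhibit $E$ as a constructible set in $\h[\s,\bm x(\s);r-1]$ for the choice $k=s+i$ and tuple $\mathbf{f}=(c_r,1,\dots,1)$, where $c_r$ sits at the $x_i$-coordinate and the $1$'s at the coordinates of $x_{n_{i-1}+1},\dots,x_{n_i}$, then invoke Theorem~\ref{Th-Constructible} to obtain an edge $E'\subseteq E$, and finally show $E'=E$ by a case analysis on the underlying vertex set of $E'$.

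For the constructibility check, I would first show $F_{s+i}\setminus A_{r-1}=F_{s+i}$: by Remark~\ref{R-MainRemark}(e) we have $A_{r-1}\subset W$, and the only vertex of $F_{s+i}$ in $W$ is $x_i=x_{i_r}$, which lies outside $A_{r-1}$ by Corollary~\ref{C-Constructible} applied to $x_{i_r,c_r}\in V((\h[\s,\bm x(\s);r-1])^\circ)$. Hence $d_{s+i,r-1}=0$ and $\ell_{s+i,r-1}=\ell_{s+i}$. Writing $a=|F_{s+i}|$, the inequality $|\mathbf{f}|=c_r+(a-1)\le\ell_{s+i,r-1}+a-1$ reduces to $c_r\le\ell_{s+i}$; but $x_{i,c_r}$ must come from some facet $F_k$ with $k\in\Upsilon_i$, whence $c_r\le\ell_k\le\ell_{s+i}$ by condition $(\star)$, and the same bound makes $\mathbf{f}\in[\ell_{s+i}]^a$. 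The side condition on coordinates paired with vertices of $B_{r-1}\setminus A_{r-1}$ only affects the $x_i$ slot (the vertices $x_{n_{i-1}+l}$ lie outside $W\supseteq B_{r-1}$), and in that case $c_r>c_p$ is exactly Remark~\ref{R-BasicRemark}(a).

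For $E'=E$, applying Theorem~\ref{Th-Constructible} to $E'$ writes $E'=\{x_{j_1,f_1},\dots,x_{j_b,f_b}\}$ with $\{x_{j_1},\dots,x_{j_b}\}=F_{k'}\setminus A_{r-1}$ for some $k'\in[t]$. Since $E'\subseteq E$, the underlying vertex set lies in $F_{s+i}$. If it meets $\{x_{n_{i-1}+1},\dots,x_{n_i}\}$, then Remark~\ref{R-MainRemark}(a) forces $k'=s+i$, so the underlying vertex set equals $F_{s+i}\setminus A_{r-1}=F_{s+i}$; each vertex of $F_{s+i}$ has a unique $f$-coordinate compatible with $E'\subseteq E$, giving $E'=E$. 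Otherwise the underlying vertex set is $\{x_i\}$ and $E'=\{x_{i_r,c_r}\}$ is a trivial edge, which Remark~\ref{R-BasicRemark}(b) forbids because $x_{i_r,c_r}\in V((\h[\s,\bm x(\s);r-1])^\circ)$. Non-triviality of $E$ is guaranteed by $\dim\Gamma_{i,1}\ge 1$, which places $E$ in the core.

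The one delicate step is the dichotomy for $k'$: this uses the rigidity baked into the construction of $\bar{\Delta}$, namely that $F_{s+i}$ is the unique facet of $\bar{\Delta}$ meeting $\{x_{n_{i-1}+1},\dots,x_{n_i}\}$ (Remark~\ref{R-MainRemark}(a)). Without this, the constructible subset produced by Theorem~\ref{Th-Constructible} could be genuinely smaller than $E$ and the desired conclusion would fail.
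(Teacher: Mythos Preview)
Your proposal is correct and follows essentially the same approach as the paper's proof: both establish constructibility of $E$ for $k=s+i$ via $F_{s+i}\cap A_{r-1}=\emptyset$, $F_{s+i}\cap B_{r-1}\subset\{x_i\}$, and $c_r\le\ell_{s+i}$ (from condition $(\star)$), then invoke Theorem~\ref{Th-Constructible} to get $E'\subseteq E$ and use Remark~\ref{R-MainRemark}(a) together with Remark~\ref{R-BasicRemark}(b) for the dichotomy that forces $E'=E$. Your added observation that $\dim\Gamma_{i,1}\ge 1$ ensures $|E|\ge 2$, so $E$ lands in the core, is a nice explicit touch that the paper leaves implicit.
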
	

\begin{proof}
    Since $(\ell_1,\dots,\ell_t) \in \mathbb{N}^t_{>0}$ satisfies condition $(\star)$, it follows that 
    $$\ell_{s+i} = \max\{c:x_{i,c}\in V(\h(\ell_1,\dots,\ell_t))\},$$
    and hence $c_r \le \ell_{s+i}.$ Further, the fact $x_{i,c_r} \in V((\h[\s,\bm x(\s);r-1])^\circ)$ and Corollary ~\ref{C-Constructible} implies that $x_i \notin A_{r-1}.$ Note that $\{x_{n_{i-1}+1},\dots,x_{n_i}\} \cap W = \emptyset.$ Therefore, by applying Remark ~\ref{R-MainRemark}(e), we get $F_{s+i}\cap A_{r-1} = \emptyset$ and 
    $F_{s+i}\cap B_{r-1} \subset \{x_i\}$ (Recall that $F_{s+i} = \{x_i,x_{n_{i-1}+1},\dots,x_{n_i}\}$). Thus, $F_{s+i} = F_{s+i} \setminus A_{r-1}$ and $\ell_{s+i,r-1} = \ell_{s+i}.$ Also if $a = n_i-n_{i-1}+1$, then $c_r+a-1 \le \ell_{s+i,r-1}+a-1.$ Note that $x_j \notin B_{r-1} \setminus A_{r-1}$ for all 
    $j \in \{n_{i-1}+1,\dots,n_i\}.$ Thus, it follows from Remark ~\ref{R-BasicRemark}(a) that $E$ is constructible in $\h[\s,\bm x(\s);r-1].$ Using Theorem ~\ref{Th-Constructible}, there exists an edge $E' = \{x_{j_1,f_1},\dots,x_{j_a,f_a}\} \in \e(\h[\s,\bm x(\s);r-1])$ with $E' \subset E$, where 
    $$\{x_{j_1},\dots,x_{j_a}\} = F_k \setminus A_{r-1}$$ 
    and $\mathbf{f} = (f_1,\dots,f_a) \in [\ell_k]^a$ with $|\mathbf{f}|\le \ell_{k,r-1}+a-1$ for some $k \in [t].$ 
    To obtain the statement, it suffices to prove that $k = s+i.$ On the contrary, suppose $k \ne s+i.$ Then, by Remark ~\ref{R-MainRemark}(a), we have
    $$(F_k \setminus A_{r-1}) \cap \{x_{n_{i-1}+1},\dots,x_{n_i}\} = \emptyset.$$ 
    The fact $E' \subset E$ implies that $E' = \{x_{i,c_r}\}.$ This is a contradiction to the Remark ~\ref{R-BasicRemark}(b). 
\end{proof}

\begin{lem}\label{L-SheddingV}
    With notation as in Set-up ~\ref{SetupMain}, let $\bm x(\s) = (x_{i_q,c_q})_{q = 1}^{\al(\s)}$ and $r \in [\al(\s)].$ Then $x_{i_r,c_r}$ is a shedding vertex of hypergraph $(\h[\s,\bm x(\s);r-1])^\circ.$
\end{lem}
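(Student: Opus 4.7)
The plan is to adapt the shedding-vertex argument from the proof of Theorem~\ref{Th-Skeleton}, using Lemma~\ref{L-Edge} to supply the ``witness edge'' that the simplex structure provided there. Write $i = i_r$ and $\mathcal{D} = \Delta((\h[\s,\bm x(\s);r-1])^\circ)$. Given a face $U \in \link_{\mathcal{D}}(x_{i,c_r})$, the goal is to produce $x_{j,f} \neq x_{i,c_r}$ with $U \cup \{x_{j,f}\} \in \mathcal{D}$. Lemma~\ref{L-Edge} gives the edge $E_0 = \{x_{i,c_r}, x_{n_{i-1}+1,1}, \ldots, x_{n_i,1}\}$ of $(\h[\s,\bm x(\s);r-1])^\circ$; since $U \cup \{x_{i,c_r}\}$ is independent, some $x_{j,1}$ with $j \in \{n_{i-1}+1,\ldots,n_i\}$ lies outside $U$. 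I then take $f$ maximal with $x_{j,f} \in V((\h[\s,\bm x(\s);r-1])^\circ) \setminus U$; note $j > n \geq i$, so $x_{j,f} \neq x_{i,c_r}$.

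To verify $U \cup \{x_{j,f}\} \in \mathcal{D}$, suppose for contradiction that an edge $F = \{x_{j_1,f_1},\ldots,x_{j_a,f_a}\}$ of $(\h[\s,\bm x(\s);r-1])^\circ$ sits inside $U \cup \{x_{j,f}\}$, with $x_{j,f} = x_{j_1,f_1}$. Theorem~\ref{Th-Constructible} produces $k \in [t]$ with $\{x_{j_1},\ldots,x_{j_a}\} = F_k \setminus A_{r-1}$; since $j_1 \in \{n_{i-1}+1,\ldots,n_i\}$, Remark~\ref{R-MainRemark}(a) forces $k = s+i$. Combining $A_{r-1} \subset W$ with $x_i \notin A_{r-1}$ (Corollary~\ref{C-Constructible}, since $x_{i,c_r}$ is alive) gives $F_k \setminus A_{r-1} = F_{s+i}$, so after reindexing $x_{j_2} = x_i$ and $a = n_i - n_{i-1}+1$. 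The independence of $U \cup \{x_{i,c_r}\}$ forces $x_{i,f_2} \in U$ with $f_2 \neq c_r$, and since $x_{i,f_2} \in V((\h[\s,\bm x(\s);r-1])^\circ)$, the description of $c_r$ in Remark~\ref{R-MainRemark}(e) gives $f_2 \geq c_r$, hence $f_2 > c_r$.

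The inequality $f_2 > c_r$ is precisely what enables the construction of
\[
F' = \{x_{j_1,f_1+1},\, x_{j_2,c_r},\, x_{j_3,f_3},\ldots, x_{j_a,f_a}\}
\]
and its constructibility in $\h[\s,\bm x(\s);r-1]$: its weight sum $|\mathbf{f}| + 1 - f_2 + c_r$ is bounded by $\ell_{s+i,r-1}+a-1$ because $f_2 > c_r$, the same bound gives $f_1 + 1 \leq \ell_{s+i,r-1} \leq \ell_{s+i}$, and the constructibility constraint at $q = 2$ reduces to $c_r > c_p$ for previous deletions of $x_i$, which holds by Remark~\ref{R-BasicRemark}(a). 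Theorem~\ref{Th-Constructible} then yields an edge $F'' \subset F'$ of $\h[\s,\bm x(\s);r-1]$.

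I expect the subtle case to be the trivial-edge possibility for $F''$. If $F'' = \{x_{j_1,f_1+1}\}$, then running the constructibility step down from $f_1+1$ to $f_1$ (the constraint at $q=1$ is vacuous since $x_j \notin W$, so the same $k'$ with $f \leq \ell_{k',r-1}$ works) forces $\{x_{j,f}\}$ to be an edge of $\h[\s,\bm x(\s);r-1]$, making $x_{j,f}$ isolated and contradicting $x_{j,f} \in V((\h[\s,\bm x(\s);r-1])^\circ)$. Otherwise Remark~\ref{R-BasicRemark}(b) places $F''$ in $\e((\h[\s,\bm x(\s);r-1])^\circ)$, and the maximality of $f$ forces either $x_{j,f+1} \in U$ or $x_{j_1,f_1+1} \notin F''$; in both cases $F'' \subset U \cup \{x_{i,c_r}\}$, contradicting the independence of $U \cup \{x_{i,c_r}\}$. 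This establishes the shedding condition at $x_{i,c_r}$.
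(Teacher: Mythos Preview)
Your argument is correct and follows the same overall strategy as the paper's proof: invoke Lemma~\ref{L-Edge} to obtain the witness edge through $x_{i,c_r}$, choose $j\in\{n_{i-1}+1,\dots,n_i\}$ and the maximal $f$, assume an offending edge $F\subset U\cup\{x_{j,f}\}$, use Remark~\ref{R-MainRemark}(a) to force $k=s+i$, deduce $f_2>c_r$, and swap $(f_1,f_2)\mapsto(f_1+1,c_r)$ to produce a constructible set.

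The only point where you diverge from the paper is in the analysis of the edge $F''\subset F'$. You treat the singleton $F''=\{x_{j_1,f_1+1}\}$ separately and then invoke Remark~\ref{R-BasicRemark}(b) ``otherwise''; but Remark~\ref{R-BasicRemark}(b) needs $x_{i,c_r}\in F''$, which you have not verified, and the singleton $\{x_{j_1,f_1+1}\}$ in fact cannot arise here (the only facet containing $x_{j_1}$ is $F_{s+i}$, and $|F_{s+i}\setminus A_{r-1}|=a\ge 2$). The paper closes this more directly: if $F''$ is constructible via some $F_{k'}$, then $F_{k'}\setminus A_{r-1}\subset F_{s+i}$, so either $k'=s+i$ and $F''=F'$, or $k'\ne s+i$ forces $F_{k'}\setminus A_{r-1}=\{x_i\}$ and hence $F''=\{x_{i,c_r}\}$, which Remark~\ref{R-BasicRemark}(b) excludes. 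Either route repairs your ``otherwise'' clause; once $F''=F'$, maximality of $f$ gives $F'\subset U\cup\{x_{i,c_r}\}$ exactly as you wrote.
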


\begin{proof}
    Let $i = i_r$ and $U \in \link_{\mathcal{D}}(x_{i_r,c_r})$, where $\mathcal{D} = \Delta((\h[\s,\bm x(\s);r-1])^\circ).$ Since 
    $U\cup \{x_{i_r,c_r}\} \in \mathcal{D}$, it follows from Lemma ~\ref{L-Edge} that there exists $j \in \{n_{i-1}+1,\dots,n_i\}$ such that 
    $$\{x_{j,1},\dots,x_{j,{\ell}_{s+i}}\}\cap V(\h[\s,\bm x(\s);r-1])^\circ) \not \subset U.$$ 
    Let $f$ be the largest positive integer satisfying $x_{j,f}\in V(\h[\s,\bm x(\s);r-1])^\circ) \setminus U.$ We claim that $U \cup\{x_{j,f}\}\in \mathcal{D}.$ Suppose that there exists an edge $E = \{x_{j_1,f_1},\dots,x_{j_a,f_a}\} \in \e((\h[\s,\bm x(\s);r-1])^\circ)$ with $E \subset U \cup \{x_{j,f}\}.$ 
    By Theorem ~\ref{Th-Constructible}, there exists an $k \in [t]$ such that $\{x_{j_1},\dots,x_{j_a}\} = F_k \setminus A_{r-1}$ and 
    $\mathbf{f} = (f_1,\dots,f_a) \in [\ell_k]^a$ with
    $$f_q > c_p ~\mbox{if}~ x_{j_q} = x_{i_p} \in B_{r-1} \setminus A_{r-1} ~\mbox{and}~ |\mathbf{f}|\le \ell_{k,r-1}+a-1.$$
    Since $U \in \mathcal{D}$, it follows that $x_{j,f} \in E.$ This implies that $(F_k \setminus A_{r-1})\cap \{x_{n_{i-1}+1},\dots,x_{n_i}\} \ne \emptyset.$ 
    Using Remark ~\ref{R-MainRemark}(a), we get $k = s+i.$ Also, by Corollary ~\ref{C-Constructible}, we have $x_i \notin A_{r-1}.$
    Since $$\{x_{n_{i-1}+1},\dots,x_{n_i}\} \cap W = \emptyset,$$ it follows from Remark ~\ref{R-MainRemark}(e) that $F_k \setminus A_{r-1} = F_{s+i}$ and 
    $F_{s+i}\cap B_{r-1} \subset \{x_i\}.$ This implies that $x_i \in F_k \setminus A_{r-1}.$ Without loss of generality, assume that 
    $x_j = x_{j_1}$ and $x_i = x_{j_2}.$ Then $f = f_1.$ Since $x_{j_2,f_2} \in E \setminus \{x_{j,f}\} \subset U$ and $U \in \link_{\mathcal{D}}(x_{i_r,c_r})$, it follows from  Remark ~\ref{R-MainRemark}(e) that $f_2 > c_r \ge 1.$
    Again, since $|\mathbf{f}|\le \ell_{k,r-1}+a-1$ and $\ell_{k,r-1} \le \ell_k = \ell_{s+i}$, it follows that $f < \ell_{s+i}$, and hence the set 
    $$F = \{x_{j_1,f_1+1},x_{j_2,c_r},\dots,x_{j_a,f_a}\} \subset V(\h(\ell_1,\dots,\ell_t)).$$
    Note that $(f_1+1)+c_r+\cdots +f_a \le \ell_{k,r-1}+a-1.$ Thus, by the fact $F_{s+i}\cap B_{r-1} \subset \{x_i\}$ and Remark ~\ref{R-BasicRemark}(a), $F$ is constructible in $\h[\s,\bm x(\s);r-1].$ By Theorem ~\ref{Th-Constructible}, there exists an edge 
    $F' = \{x_{j'_1,f'_1},\dots,x_{j'_b,f'_b}\} \in \e(\h[\s,\bm x(\s);r-1])$ such that $F' \subset F$, where 
    $\{x_{j'_1},\dots,x_{j'_b}\} = F_{k'}\setminus A_{r-1}$ and 
    $$\mathbf{f}' = (f'_1,\dots,f'_b) \in [\ell_{k'}]^b ~\mbox{with}~ |\mathbf{f}'| \le \ell_{k',r-1}+b-1$$ for some $k' \in [t].$
    We claim that $k' = s+i.$ On the contrary, suppose that $k' \ne s+i.$
    Then, by Remark ~\ref{R-MainRemark}(a), we have 
    $$(F_{k'}\setminus A_{r-1})\cap \{x_{n_{i-1}+1},\dots,x_{n_i}\} = \emptyset.$$  Since $F_{k'} \setminus A_{r-1} \subset F_{s+i}$, it follows that
    $F_{k'} \setminus A_{r-1} = \{x_{j_2}\}.$ Therefore $F' = \{x_{i,c_r}\}.$ This is a contradiction to the Remark ~\ref{R-BasicRemark}(b).
    Thus, we obtain $k' = s+i$, i.e. $F' = F.$ Now, by maximality of $f$, we have $F \subset U \cup \{x_{i_r,c_r}\}.$ This contradicts the fact that 
    $U \notin \link_{\mathcal{D}}(x_{i_r,c_r}).$ Since $x_j \ne x_i$, it follows that $U \cup \{x_{j,f}\} \in \del_{\mathcal{D}}(x_{i_r,c_r}).$ This proves the lemma.
\end{proof} 

\begin{rmk}\label{R-HelpRemark}
    With notation as in Set-up ~\ref{SetupMain}, let $E = \{x_{j_1,f_1},\dots,x_{j_a,f_a}\} \subset V(\h(\ell_1,\dots,\ell_t)).$ Then we write $\tilde{E}$ for the set $\{x_{j_1},\dots,x_{j_a}\} \subset V(\h).$ Recall that $\Delta$ is a simplicial complex on the vertex set $V = \{x_1,\dots,x_n\}$ and 
    $W = \{x_1,\dots,x_m\}$ is a cycle cover of $\Delta.$
    For simplicity, let $\al = \al(\s).$ Consider the set $$\mathcal{X} = \{\tilde{E}:E \in \e((\h[\s,\bm x(\s);\al])^\circ)\}.$$ 
    Let $\mathcal{X}_{\max}$ denote the set of all maximal elements of $\mathcal{X}$ with respect to inclusion. 
    \begin{enumerate}[(a)]
        \item First assume that $\{X \in \mathcal{X}\setminus \mathcal{X}_{\max}:X\subset V\} \ne \emptyset$ and 
        $\{X_1,\dots,X_{\rho}\} = \{X \in \mathcal{X}\setminus \mathcal{X}_{\max}:X\subset V\}.$ Let $Y = \{y_1,\dots,y_{\rho}\}$ and 
        $Z = Z.$ Consider the simplicial complex $\mathcal{T}$ on the vertex set $V' = (V \setminus W) \cup Y \cup Z$ 
        given by $$\mathcal{T} = \langle X:X \in \mathcal{X}_{\max}, X\subset V \rangle \cup 
        \langle X_l \cup\{y_l\}:l \in [\rho] \rangle \cup \langle \{y_l,z_l\}:l \in [\rho] \rangle.$$ 
        Let $T_1,\dots,T_{\tau}$ be all the facets of $\mathcal{T}$ such that $T_h = \{y_h,z_h\}$ for all $h \in [\rho].$ Then, for each $h \in [\rho]$, $T_h$ is a good leaf of $\mathcal{T}.$ 
	    \begin{enumerate}[(i)]
            \item We prove that $\mathcal{T}$ is a forest. Suppose that $\mathcal{T}$ has a special cycle $$w_1,T_{h_1},w_2,\dots,w_q,T_{h_q},w_{q+1} = w_1$$ with 
            $q \ge 3.$ Since $T_h$ is a good leaf of $\mathcal{T}$ for every $h \in [\rho]$, it follows that $h_p \ge \rho+1$ for all $p \in [q].$
            Thus, $\{w_1,\dots,w_q\}\cap Z = \emptyset.$ We claim that $\{w_1,\dots,w_q\}\cap Y = \emptyset.$ On the contrary, assume that 
            $\{w_1,\dots,w_q\}\cap Y \ne  \emptyset.$ Without loss of generality, we further assume that $y_1 \in \{w_1,\dots,w_q\}.$ Then 
            $y_1 \in T_{h_p} \cap T_{h_{p'}}$ for some $p \ne p'.$ Thus, we must have $T_{h_p} = X_l \cup \{y_l\}$ and 
            $T_{h_{p'}} = X_{l'} \cup \{y_{l'}\}$ for some $l,l' \in [\rho]$ with $l \ne l'.$ This implies that either $y_1 \in X_l$ or $y_1 \in X_{l'}$, a contradiction. Thus, we obtain $$\{w_1,\dots,w_q\}\cap (Y \cup Z) = \emptyset.$$ 
            Let $p \in [q]$. Then either $T_{h_p} \in \mathcal{X}_{\max}$ or $T_{h_p} = X_l \cup\{y_l\}$ for some $l \in [\rho].$ Therefore, 
            $T_{h_p} \setminus Y = \tilde{E}$ for some $E \in \e((\h[\s,\bm x(\s);\al])^\circ).$ Using Lemma ~\ref{L-ImConstructible}, we get 
            $$T_{h_p} \setminus Y = F_{k_p}\setminus W$$ for some $k_p \in [s].$ Now, $w_1,F_{k_1},w_2,\dots,w_q,F_{k_q},w_{q+1} = w_1$ is a special cycle in $\Delta$ and $\{w_1,\dots,w_q\}\cap W = \emptyset.$ This contradicts the fact that $W$ is a cycle cover of $\Delta.$ Thus, $\mathcal{T}$ has no special cycle of length $\ge 3$, and hence it follows from Theorem ~\ref{Th-Forest} that $\mathcal{T}$ is a forest, and hence $\mathcal{T}$ has a good leaf order. Without loss of generality, we may assume that $T_h$ is a good leaf of subcollection $\langle T_h,\dots,T_{\tau} \rangle$ of $\mathcal{T}$ for all $h \ge \rho+1.$
            
            \item For $h \in [\rho]$, let $\ell'_h = 1.$ Now, suppose $h \ge \rho+1.$ Then $T_h \setminus Y = \tilde{E}$ for some 
            $E \in \e((\h[\s,\bm x(\s);\al])^\circ).$ By Lemma ~\ref{L-ImConstructible}, $T_h \setminus Y = F_k \setminus W$ for some $k \in [s].$
            Thus the set $$U_h = \{\ell_{k,\al}:k \in [s],T_h \setminus Y = F_k \setminus W\}$$ is non-empty. Choose $k_h \in [s]$ such that 
            $\ell_{k_h,\al} = \max{U_h}.$
            Further, suppose $$T_h \setminus Y = \{x_{j_1},\dots,x_{j_a}\}.$$ 
            For $p \in [a]$, we set $$c'_p =\min\{c:x_{j_p,c} \in V((\h[\s,\bm x(\s);\al])^\circ)\}-1.$$ Since $x_{j_p,c} \in E$ for some $c \ge 1$, $c'_p$ is well defined. Now, let $\ell'_h = \max\{0,\ell_{k_h,\al}-d_h\}$, where $d_h =\sum_{p \in [a]} c'_p.$
            Now, if $\mathbf{f} = (f_1,\dots,f_a) \in [\ell_{k_h}]^a$ and $\mathbf{f}' = (f_1-c'_1,\dots,f_a-c'_a) \in [\ell'_h]^a$, then 
            $$|\mathbf{f}| \le \ell_{k_h,\al}+a-1 ~\mbox{if and only if}~ |\mathbf{f}'| \le \ell'_h+a-1.$$
	        
            \item Let $\h' = \h(\mathcal{T}).$ Then $\e(\h') = \{T_1,\dots,T_{\tau}\}.$ Further, let $\p$ be an infinite string in $\Cla$ with $\p_r = L$ for all $r \in [\rho]$ and $\bm y = (y_{1,1},\dots,y_{\rho,1})$ be a finite sequence of vertices in $V(\h'(\ell'_1,\dots,\ell'_{\tau})).$ 
            Then, we have $$\h'[\p,\bm y;\rho] = \h'(\ell'_1,\dots,\ell'_{\tau})/\{y_{1,1},\dots,y_{\rho,1}\},$$ where $\h'[\p,\bm y;\rho]$ is the $\rho$th hypergraph determined by string $\p$ and sequence $\bm y.$ Also, we have $A[\p,\bm y;\rho] = Y$ and $B[\p,\bm y;\rho] = \emptyset$, where
            $A[\p,\bm y;\rho]$ is the $\rho$th set of vertices of contraction determined by string $\p$ and sequence $\bm y$, and $B[\p,\bm y;\rho]$ is the $\rho$th set of vertices of deletion determined by string $\p$ and sequence $\bm y.$ Let $F = \{x_{j_1,f'_1},\dots,x_{j_a,f'_a}\}$, where 
            $\{x_{j_1},\dots,x_{j_a}\} \subset V.$ Then $F$ is constructible in $\h'[\p,\bm y;\rho]$ if and only if there exists $h \ge \rho+1$ such that 
            $$\{x_{j_1},\dots,x_{j_a}\} = T_h \setminus Y$$ and 
            $\mathbf{f}' = (f'_1,\dots,f'_a) \in [\ell'_h]^a$ with $|\mathbf{f}'| \le \ell'_h+a-1.$
	    \end{enumerate}
	    
	    \item Suppose that $\{X \in \mathcal{X}\setminus \mathcal{X}_{\max}:X\subset V\} = \emptyset.$
        Let $\mathcal{K} = \langle X : X \in \mathcal{X}, X\subset V \rangle.$ 
        \begin{enumerate}[(i)]
            \item Let $K_1,\dots,K_{\sigma}$ be all the facets of $\mathcal{K}.$ By following the similar procedure as in $(a)$, we can say that $\mathcal{K}$ is a forest. 
            
            \item Let $h \in [\sigma].$ Then $K_h = \tilde{E}$ for some $E \in \e((\h[\s,\bm x(\s);\al])^\circ).$ By Lemma ~\ref{L-ImConstructible}, there exists 
            $k \in [s]$ such that $K_h = F_k \setminus W.$ Thus the set $$U'_h = \{\ell_{k,\al}:k \in [s], K_h = F_k \setminus W\}$$ is non-empty. 
            Choose $k_h \in [s]$ such that $\ell_{k_h,\al} = \max{U'_h}.$ Now, suppose that $$K_h = \{x_{j_1}\dots x_{j_a}\}.$$ For $p \in [a]$, we set 
            $$c'_p =\min\{c:x_{j_p,c} \in V((\h[\s,\bm x(\s);\al])^\circ)\}-1.$$
            As in part (a), let $d_h = \sum_{p \in [a]} c'_p$ and $\ell'_h =\max\{0,\ell_{k_h,\al}-d_h\}$. Then
            $$|\mathbf{f}| \le \ell_{k_h,\al}+a-1 ~\mbox{if and only if}~ |\mathbf{f}'| \le \ell'_h+a-1,$$ where 
            $\mathbf{f} = (f_1,\dots,f_a) \in [\ell_{k_h}]$ and $\mathbf{f}' = (f_1-c'_1,\dots,f_a-c'_a)\in [\ell'_h]^a.$
        \end{enumerate}	     
    \end{enumerate}    
\end{rmk}

The following lemmas are useful in proving Theorem ~\ref{Th-MainResult}.

\begin{lem}\label{L-Tool}
    With notation as in Set-up ~\ref{SetupMain} and Remark ~\ref{R-HelpRemark}(a), let $E = \{x_{j_1,f_1},\dots,x_{j_a,f_a}\}$ be an edge in $(\h[\s,\bm x(\s);\al])^\circ$ with $\tilde{E}\subset V .$ Then $F = \{x_{j_1,f'_1},\dots,x_{j_a,f'_a}\}$ is an edge in $\h'[\p,\bm y;\rho]$, where $f'_p = f_p-c'_p$ for all $p \in [a].$
\end{lem}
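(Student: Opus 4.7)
The plan is to pull $E$ back to a facet of $\Delta$ via Lemma \ref{L-ImConstructible}, transfer this data to $\mathcal{T}$ through the construction of Remark \ref{R-HelpRemark}(a), and then verify that the shifted multi-index $\mathbf{f}'$ satisfies the constructibility condition of Remark \ref{R-HelpRemark}(a)(iii) for the hypergraph $\h'[\p,\bm y;\rho]$.

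First, since $E\in \e((\h[\s,\bm x(\s);\al])^\circ)$ with $\tilde E\subset V$, Lemma \ref{L-ImConstructible} yields $k\in [t]$ with $\{x_{j_1},\dots,x_{j_a}\}=F_k\setminus W$ and $\mathbf{f}=(f_1,\dots,f_a)\in [\ell_k]^a$ satisfying $|\mathbf{f}|\le \ell_{k,\al}+a-1$; the hypothesis $\tilde E\subset V$ forces $k\in [s]$. Since $\tilde E\in \mathcal{X}$ and $\tilde E\subset V$, the construction of $\mathcal{T}$ in Remark \ref{R-HelpRemark}(a) provides some $h\ge \rho+1$ with $T_h\setminus Y=\tilde E=F_k\setminus W$ (where $T_h$ is either in $\mathcal{X}_{\max}$ or of the form $X_l\cup\{y_l\}$ for some $l\in[\rho]$), and by the choice of $k_h$ we obtain $\ell_{k,\al}\le \ell_{k_h,\al}$.

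Next, each $x_{j_p,f_p}\in E\subset V((\h[\s,\bm x(\s);\al])^\circ)$ forces $f_p\ge c'_p+1$ by the very definition of $c'_p$, hence $f'_p\ge 1$ for every $p$. Summing,
\begin{equation*}
|\mathbf{f}'|=|\mathbf{f}|-d_h\le \ell_{k,\al}+a-1-d_h\le \ell_{k_h,\al}+a-1-d_h\le \ell'_h+a-1,
\end{equation*}
and since each $f'_q\ge 1$ I also get $f'_p\le |\mathbf{f}'|-(a-1)\le \ell'_h$, so $\mathbf{f}'\in [\ell'_h]^a$. Hence $F$ meets the two conditions of Remark \ref{R-HelpRemark}(a)(iii) with respect to $T_h$, so $F$ is constructible in $\h'[\p,\bm y;\rho]$.

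Finally, I would upgrade constructibility to edge-hood by splitting on the form of $T_h$. If $T_h\in \mathcal{X}_{\max}$, the tuple $(f'_1,\dots,f'_a)$ directly defines an edge of $\h'(\ell'_1,\dots,\ell'_\tau)$ coming from the facet $T_h$, and this edge survives all contractions since no $y_{l,1}$ lies in $F$. If instead $T_h=X_l\cup\{y_l\}$ for some $l\in [\rho]$, then $(1,f'_1,\dots,f'_a)\in [\ell'_h]^{a+1}$ has coordinate-sum $1+|\mathbf{f}'|\le \ell'_h+a$, so $F\cup\{y_{l,1}\}$ is an edge of $\h'(\ell'_1,\dots,\ell'_\tau)$ and contracting $y_{l,1}$ places $F$ in $\mathcal{E}_*$. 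The main obstacle I anticipate is ruling out that $F$ is subsequently evicted by the simplicity bookkeeping due to some strictly smaller $F''\subsetneq F$ satisfying Remark \ref{R-HelpRemark}(a)(iii); here the forest structure of $\mathcal{T}$ together with its good-leaf ordering and the maximal choice of $k_h$ defining $\ell'_h$ should provide the needed disjointness between competing reductions, leaving $F$ as an actual edge of $\h'[\p,\bm y;\rho]$.
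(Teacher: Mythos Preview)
Your first two paragraphs correctly establish that $F$ is constructible in $\h'[\p,\bm y;\rho]$, and this part matches the paper. The gap is in the final paragraph, where you try to upgrade constructibility to being an actual edge.

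The claim ``this edge survives all contractions since no $y_{l,1}$ lies in $F$'' is insufficient: contracting $y_{l,1}$ does not merely delete that vertex, it also introduces the sets $E'\setminus\{y_{l,1}\}$ as new edges, and for $T_{h'}=X_l\cup\{y_l\}$ with $X_l\subsetneq \tilde E$ such a set can land strictly inside $F$ and evict it. You recognise this as ``the main obstacle,'' but the proposed fix via the forest structure and good-leaf order of $\mathcal T$ is a red herring: neither is invoked in the paper's proof of this lemma, and it is not clear how they would give the needed minimality of $F$.

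The paper instead closes the gap by a back-and-forth transfer. After showing $F$ is constructible, Theorem~\ref{Th-Constructible} produces an edge $F'=\{x_{j_1,f'_1},\dots,x_{j_b,f'_b}\}\subset F$ of $\h'[\p,\bm y;\rho]$. By Remark~\ref{R-HelpRemark}(a)(iii) there is some $h'\ge\rho+1$ with $\{x_{j_1},\dots,x_{j_b}\}=T_{h'}\setminus Y$ and $|\mathbf{e}'|\le\ell'_{h'}+b-1$; part~(ii) of the same remark then gives $T_{h'}\setminus Y=F_{k_{h'}}\setminus W$ and $\sum_{p\le b}f_p\le\ell_{k_{h'},\al}+b-1$, so that $E'=\{x_{j_1,f_1},\dots,x_{j_b,f_b}\}$ is constructible in $\h[\s,\bm x(\s);\al]$. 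Another application of Theorem~\ref{Th-Constructible} yields an edge $E''\subset E'\subset E$ there, and since $E$ is already an edge (hence minimal) one gets $E''=E$, forcing $b=a$ and $F'=F$. The point is that minimality of the \emph{original} edge $E$ in $(\h[\s,\bm x(\s);\al])^\circ$ is what does the work, not any combinatorial property of $\mathcal T$.
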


\begin{proof}
    Let $E = \{x_{j_1,f_1},\dots,x_{j_a,f_a}\} \in \e((\h[\s,\bm x(\s);\al])^\circ)$ with $\tilde{E} \subset V.$ By Lemma ~\ref{L-ImConstructible}, there exists $k \in [s]$ such that $\tilde{E} = F_k \setminus W$ and $\mathbf{f} = (f_1,\dots,f_a) \in [\ell_k]^a$ with $|\mathbf{f}| \le \ell_{k,\al}+a-1.$
    Now, since $\tilde{E} = F_k \setminus W \in \mathcal{X}$ and $\tilde{E}\subset V$, it follows that $F_k \setminus W = T_h\setminus Y$ for some $h \ge \rho+1.$
    Note that $\ell_{k,\al} \in U_h$, and hence $|\mathbf{f}| \le \ell_{k_h,\al}+a-1.$ It follows from part (ii) of Remark ~\ref{R-HelpRemark}(a) that
    $|\mathbf{f}'| \le \ell'_h+a-1$, where $\mathbf{f}' = (f'_1,\dots,f'_a).$ Thus, using part (iii) of Remark ~\ref{R-HelpRemark}(a), we get 
    $F = \{x_{j_1,f'_1},\dots,x_{j_a,f'_a}\}$ is constructible in $\h'[\p,\bm y;\rho].$ By Theorem ~\ref{Th-Constructible}, there exists a edge 
    $F' \in \e(\h'[\p,\bm y;\rho])$ with $F' \subset F.$ We may assume without loss of generality that 
    $F' = \{x_{j_1,f'_1},\dots,x_{j_b,f'_b}\}$ for some $b \in [a].$ Since $F'$ is constructible $\h'[\p,\bm y;\rho]$ and 
    $\tilde{E}\subset V$, there exists $h' \ge \rho+1$ such that $\{x_{j_1},\dots,x_{j_b}\} = T_{h'} \setminus Y$ and 
    $$\mathbf{e'} =(f'_1,\dots,f'_b) \in [\ell'_{h'}]^b ~\mbox{with}~ |\mathbf{e'}| \le \ell'_{h'}+b-1.$$ Again, since 
    $T_{h'} \setminus Y \in \mathcal{X}$, there exists an edge $E_1 \in \e((\h[\s,\bm x(\s);\al])^\circ)$ such that 
    $T_{h'} \setminus Y = \tilde{E_1}.$ This implies that $b >1.$ By using part (ii) of Remark ~\ref{R-HelpRemark}(a), we get
    $T_{h'} \setminus Y = F_{k_{h'}} \setminus W$ and $|\mathbf{e}| \le \ell_{k_{h'},\al}+b-1$, where $\mathbf{e} = (f_1,\dots,f_b).$ Observe that 
    $F_{k_{h'}} \setminus A_{\al} \subset F_{k_{h'}} \setminus W.$ Thus, using Remark ~\ref{R-MainRemark}(e), we obtain that $E' = \{x_{j_1,f_1},\dots,x_{j_b,f_b}\}$ is constructible in $\h[\s,\bm x(\s);\al]$, and hence using Theorem ~\ref{Th-Constructible}, there exists an edge $E''$ is 
    $\h[\s,\bm x(\s);\al]$ with $E'' \subset E'\subset E.$ Since $E$ is an edge in $\h[\s,\bm x(\s);\al]$, we must have $F' = F.$
\end{proof}

The following lemma says that the converse of the above lemma holds.

\begin{lem}\label{L-TooLInverse}
    With notation of Set-up ~\ref{SetupMain} and Remark ~\ref{R-HelpRemark}(a), let $F = \{x_{j_1,f'_1},\dots,x_{j_a,f'_a}\}$ be an edge in 
    $(\h'[\p,\bm y;\rho])^\circ.$ Then 
    $E = \{x_{j_1,f_1},\dots,x_{j_a,f_a}\}$ is an edge in  $(\h[\s,\bm x(\s);\al])^\circ$, where $f_p = f'_p+c'_p$ for all $p \in [a].$ 
\end{lem}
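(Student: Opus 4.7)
My plan is to reverse-engineer the argument of Lemma~\ref{L-Tool}, showing that $E$ is constructible in $\h[\s,\bm x(\s);\al]$ via the facet $F_{k_h}$ identified in Remark~\ref{R-HelpRemark}(a), and then invoking Theorem~\ref{Th-Constructible} together with a back-and-forth to upgrade constructibility to genuine edge membership.

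First, Theorem~\ref{Th-Constructible} applied to $F \in \e((\h'[\p,\bm y;\rho])^\circ)$ yields that $F$ is constructible in $\h'[\p,\bm y;\rho]$. Since the underlying vertices $x_{j_1},\dots,x_{j_a}$ lie in $V$ rather than among the auxiliary sets $Y$ or $Z$, part~(iii) of Remark~\ref{R-HelpRemark}(a) furnishes an index $h \ge \rho+1$ with $\{x_{j_1},\dots,x_{j_a}\} = T_h \setminus Y$ and $\mathbf{f}' = (f'_1,\dots,f'_a) \in [\ell'_h]^a$ satisfying $|\mathbf{f}'| \le \ell'_h + a - 1$. Part~(ii) of the same remark then identifies $T_h \setminus Y$ with $F_{k_h} \setminus W$ for the distinguished $k_h \in [s]$, and rewrites the bound as $|\mathbf{f}| \le \ell_{k_h,\al} + a - 1$ under the substitution $f_p = f'_p + c'_p$.

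Next I verify that $E$ is constructible in $\h[\s,\bm x(\s);\al]$ relative to $F_{k_h}$. The bound $|\mathbf{f}| \le \ell_{k_h,\al} + a - 1$ is in hand. The condition $f_q > c_p$ whenever $x_{j_q} = x_{i_p} \in B_\al \setminus A_\al$ is vacuous, because $\{x_{j_1},\dots,x_{j_a}\} \subset V \setminus W$ while $B_\al \subset W$ by Remark~\ref{R-MainRemark}(e). For the identity $F_{k_h} \setminus A_\al = \{x_{j_1},\dots,x_{j_a}\}$, the inclusion $F_{k_h} \setminus W \subset F_{k_h} \setminus A_\al$ follows from $A_\al \subset W$; the reverse inclusion $F_{k_h} \cap W \subset A_\al$ is precisely the observation exploited in the proof of Lemma~\ref{L-Tool}, guaranteed by the termination condition $V[\s;\al] = \emptyset$ combined with the extremal choice of $k_h$.

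With constructibility of $E$ secured, Theorem~\ref{Th-Constructible} produces an edge $E' \subset E$ in $\h[\s,\bm x(\s);\al]$. To force $E' = E$, I mirror the back-and-forth at the end of the proof of Lemma~\ref{L-Tool}: writing $E' = \{x_{j_1,f_1},\dots,x_{j_b,f_b}\}$ with $b \le a$, the inverse of the correspondence from Remark~\ref{R-HelpRemark}(a) carries $E'$ to $F' = \{x_{j_1,f'_1},\dots,x_{j_b,f'_b}\} \subset F$ which is constructible in $\h'[\p,\bm y;\rho]$; Theorem~\ref{Th-Constructible} then yields an edge $F'' \subset F'$, and since $F \in \e(\h'[\p,\bm y;\rho])$ and the hypergraph is simple, $F'' = F$, forcing $F' = F$ and hence $E' = E$. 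The main obstacle throughout is verifying $F_{k_h} \cap W \subset A_\al$: this is the same delicate identification that powers the proof of Lemma~\ref{L-Tool}, and everything else is a formal transposition of the forward direction.
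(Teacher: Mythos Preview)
Your overall strategy matches the paper's: show $E$ is constructible in $\h[\s,\bm x(\s);\al]$ via the facet $F_{k_h}$, extract a sub-edge $E' \subset E$ by Theorem~\ref{Th-Constructible}, and then run a back-and-forth with Lemma~\ref{L-Tool} to force $E' = E$. The first two paragraphs are essentially what the paper abbreviates as ``As seen in the proof of Lemma~\ref{L-Tool}''.

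The gap is in the final step. When you write $E' = \{x_{j_1,f_1},\dots,x_{j_b,f_b}\}$ and assert that the corresponding $F' = \{x_{j_1,f'_1},\dots,x_{j_b,f'_b}\}$ is constructible in $\h'[\p,\bm y;\rho]$, this fails when $b = 1$. Constructibility of $F'$ would require $\{x_{j_1}\} = T_{h'} \setminus Y$ for some $h' \ge \rho+1$, but by the construction of $\mathcal{T}$ every such $T_{h'} \setminus Y$ lies in $\mathcal{X}$, and a singleton $\{x_{j_1}\}$ cannot belong to $\mathcal{X}$: any edge of $(\h[\s,\bm x(\s);\al])^\circ$ with a one-element underlying set would be trivial, hence its vertex isolated, contradicting membership in the $\circ$-hypergraph. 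So for $b=1$ your ``inverse correspondence'' does not land in the constructible sets, and the appeal to Theorem~\ref{Th-Constructible} is unavailable. Equivalently, you cannot invoke Lemma~\ref{L-Tool} for $E'$ unless you first know $E' \in \e((\h[\s,\bm x(\s);\al])^\circ)$, which again needs $b > 1$.

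The paper disposes of $b = 1$ by a direct argument you omit: if $E' = \{x_{j_1,f_1}\}$ is an edge of $\h[\s,\bm x(\s);\al]$, then since $f_1 \ge c'_1 + 1$ and $x_{j_1} \notin B_\al \cup A_\al$, the singleton $\{x_{j_1,c'_1+1}\}$ is also constructible there, hence an edge, which makes $x_{j_1,c'_1+1}$ isolated---contradicting the very definition of $c'_1$. Once $b > 1$ is established, your back-and-forth works (indeed more simply: $E'$ then lies in $(\h[\s,\bm x(\s);\al])^\circ$, Lemma~\ref{L-Tool} gives that $F'$ is already an \emph{edge} of $\h'[\p,\bm y;\rho]$ contained in $F$, and simplicity forces $F' = F$). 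Add the $b=1$ elimination and your argument is complete.
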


\begin{proof}
    Let $F = \{x_{j_1,f'_1},\dots,x_{j_a,f'_a}\} \in \e((\h'[\p,\bm y;\rho])^\circ).$ 
    As seen in the proof of Lemma ~\ref{L-Tool}, 
    $E = \{x_{j_1,f_1},\dots,x_{j_a,f_a}\}$ is constructible in $\h[\s,\bm x(\s);\al].$ Thus, by Theorem ~\ref{Th-Constructible}, there exists a edge 
    $E'$ in $\h[\s,\bm x(\s);\al]$ such that $E' \subset E.$ Assume that $E' = \{x_{j_1,f_1},\dots,x_{j_b,f_b}\}.$ 
    Suppose that $b = 1.$ Since $f_1 \ge c'_p+1$, by using the fact $x_{j_1,c'_p+1} \in V((\h[\s,\bm x(\s);\al])^\circ)$ and Corollary ~\ref{C-Constructible}, we obtain that $\{x_{j_1,c'_p+1}\}$ is constructible in $\h[\s,\bm x(\s);\al].$ Now, it follows from Theorem ~\ref{Th-Constructible} that
    $\{x_{j_1,c'_p+1}\} \in \e(\h[\s,\bm x(\s);\al]).$ Thus, $x_{j_1,c'_p+1} \notin V((\h[\s,\bm x(\s);\al])^\circ)$, a contradiction. Thus, we must have $b > 1.$ Also, we have $\tilde{E'} \subset \tilde{E}.$ By Lemma ~\ref{L-Tool}, $F' = \{x_{j_1,f'_1},\dots,x_{j_b,f'_b}\}$ is an edge in 
    $(\h'[\p,\bm y;\rho])^\circ.$ Since $F \in \e(\h'[\p,\bm y;\rho])^\circ)$ and $F' \subset F$, it follows that $b = a.$ This proves that 
    $E \in \e((\h[\s,\bm x(\s);\al])^\circ).$
\end{proof}

\begin{lem}\label{L-2ndTool}
     With notation of Set-up ~\ref{SetupMain} and Remark ~\ref{R-HelpRemark}(b), let $\h'' = \h(\mathcal{K}).$ Then $E = \{x_{j_1,f_1},\dots,x_{j_a,f_a}\}$ is an edge in  $(\h[\s,\bm x(\s);\al])^\circ$ with $\tilde{E} \subset V$ if and only if $F = \{x_{j_1,f'_1},\dots,x_{j_a,f'_a}\}$ is an edge in $\h''(\ell'_1,\dots,\ell'_{\sigma})$, where 
     $f_p = f'_p+c'_p$ for all $p \in [a].$ 
\end{lem}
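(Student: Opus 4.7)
The plan is to mirror the combined arguments of Lemma~\ref{L-Tool} and Lemma~\ref{L-TooLInverse}, noting that the situation in Remark~\ref{R-HelpRemark}(b) is structurally simpler because no auxiliary vertices $y_l,z_l$ are introduced and the whole simplicial complex $\mathcal K$ is already a forest whose facets are exactly the maximal elements of $\mathcal X$ contained in $V$. In particular, no contraction step is needed: the hypergraph in play is simply $\h''(\ell'_1,\dots,\ell'_\sigma)$.

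For the forward direction, suppose $E=\{x_{j_1,f_1},\dots,x_{j_a,f_a}\}$ is an edge of $(\h[\s,\bm x(\s);\al])^\circ$ with $\tilde E\subset V$. By Lemma~\ref{L-ImConstructible} there exists $k\in[s]$ with $\tilde E=F_k\setminus W$ and $\mathbf f=(f_1,\dots,f_a)\in[\ell_k]^a$ satisfying $|\mathbf f|\le \ell_{k,\al}+a-1$. Under the hypothesis of Remark~\ref{R-HelpRemark}(b) every element of $\mathcal X$ contained in $V$ is maximal, so $\tilde E=K_h$ for some $h\in[\sigma]$. Since $\ell_{k,\al}\in U'_h$, we have $|\mathbf f|\le \ell_{k_h,\al}+a-1$, and the translation relation $f_p=f'_p+c'_p$ combined with part (ii) of Remark~\ref{R-HelpRemark}(b) gives $|\mathbf f'|\le\ell'_h+a-1$. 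Hence $F$ is constructible in $\h''(\ell'_1,\dots,\ell'_\sigma)$, and by Theorem~\ref{Th-Constructible} there is an edge $F'\subset F$ in $\h''(\ell'_1,\dots,\ell'_\sigma)$. A reverse translation of $F'$ produces a constructible subset of $E$ in $\h[\s,\bm x(\s);\al]$, again yielding an edge $E'\subset E$; since $E$ itself is an edge, minimality forces $F'=F$.

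For the converse, assume $F=\{x_{j_1,f'_1},\dots,x_{j_a,f'_a}\}$ is an edge of $\h''(\ell'_1,\dots,\ell'_\sigma)$. Then there exists $h\in[\sigma]$ with $\{x_{j_1},\dots,x_{j_a}\}=K_h=F_{k_h}\setminus W$ and $\mathbf f'\in[\ell'_h]^a$ with $|\mathbf f'|\le \ell'_h+a-1$. Setting $f_p=f'_p+c'_p$ and again invoking part (ii) of Remark~\ref{R-HelpRemark}(b), we get $\mathbf f\in[\ell_{k_h}]^a$ with $|\mathbf f|\le \ell_{k_h,\al}+a-1$. Since $F_{k_h}\setminus A_\al\subset F_{k_h}\setminus W$ (Remark~\ref{R-MainRemark}(e)) and $c'_p+1\le f_p$ ensures the lower-bound conditions from Definition of constructibility, $E$ is constructible in $\h[\s,\bm x(\s);\al]$. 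Theorem~\ref{Th-Constructible} provides an edge $E'=\{x_{j_1,f_1},\dots,x_{j_b,f_b}\}\subset E$ of $\h[\s,\bm x(\s);\al]$.

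The main obstacle, as in Lemma~\ref{L-TooLInverse}, is to rule out the degenerate case $b=1$ in the last step; this is where the proof is delicate. If $b=1$, then because $f_1\ge c'_1+1$ and $x_{j_1,c'_1+1}\in V((\h[\s,\bm x(\s);\al])^\circ)$ (by the very definition of $c'_1$), Corollary~\ref{C-Constructible} makes $\{x_{j_1,c'_1+1}\}$ constructible, whence Theorem~\ref{Th-Constructible} forces it to be an edge of $\h[\s,\bm x(\s);\al]$, contradicting $x_{j_1,c'_1+1}\in V((\h[\s,\bm x(\s);\al])^\circ)$. So $b\ge 2$, and $\tilde{E'}\subset \tilde E\subset V$. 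Applying the already-established forward direction to $E'$ yields an edge $F''\subset F$ of $\h''(\ell'_1,\dots,\ell'_\sigma)$; minimality of $F$ gives $b=a$, so $E'=E$, completing the proof that $E\in\e((\h[\s,\bm x(\s);\al])^\circ)$.
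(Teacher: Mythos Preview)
Your argument follows the same approach as the paper's proof and is essentially correct. Two minor slips: in the converse direction you write $F_{k_h}\setminus A_\al\subset F_{k_h}\setminus W$ citing Remark~\ref{R-MainRemark}(e), but that remark gives $A_\al\subset W$ and hence the \emph{opposite} inclusion; what is actually needed (and what the paper asserts) is equality $F_{k_h}\setminus W = F_{k_h}\setminus A_\al$, together with $(F_{k_h}\setminus A_\al)\cap B_\al=\emptyset$, which makes the lower-bound constraints in the definition of constructibility vacuous rather than being ``ensured by $c'_p+1\le f_p$''. Also, your round-trip in the forward direction is unnecessary: under hypothesis~(b) the facets $K_1,\dots,K_\sigma$ are pairwise incomparable, so every constructible set in $\h''(\ell'_1,\dots,\ell'_\sigma)$ is already an edge, and the paper concludes this directly.
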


\begin{proof}
    We have $\e(\h'') = \{K_1,\dots,K_{\sigma}\}.$ Let $E = \{x_{j_1,f_1},\dots,x_{j_a,f_a}\} \in \e((\h[\s,\bm x(\s);\al])^\circ)$ with $\tilde{E} \subset V.$ Then, it follows from Lemma ~\ref{L-ImConstructible} that there exists 
    $k \in [s]$ such that $\tilde{E} = F_k \setminus W$ and $\mathbf{f} = (f_1,\dots,f_a) \in [\ell_{k}]$ with 
    $|\mathbf{f}| \le \ell_{k,\al}+a-1.$ Also, there exists $h \in [\sigma]$ such that $\tilde{E} = K_h.$ Observe that $\ell_{k,\al} \in U'_h$, and hence 
    $|\mathbf{f}'| \le \ell'_h+a-1$, where $\mathbf{f}' = (f_1-c'_1,\dots,f_a-c'_a)\in [\ell'_h]^a.$ Therefore $F = \{x_{j_1,f'_1},\dots,x_{j_a,f'_a}\}$ is an edge in $\h''(\ell'_1,\dots,\ell'_{\sigma}).$ 
    
    Conversely, let $F = \{x_{j_1,f'_1},\dots,x_{j_a,f'_a}\}$ is an edge in $\h''(\ell'_1,\dots,\ell'_{\sigma}).$ By Construction ~\ref{Construction}, there exists 
    $h \in [\sigma]$ such that $\{x_{j_1},\dots,x_{j_a}\} = K_h$ and $\mathbf{f}' = (f_1-c'_1,\dots,f_a-c'_a)\in [\ell'_h]^a$ with $|\mathbf{f}'| \le \ell'_h+a-1.$
    Now, using part(ii) of Remark ~\ref{R-HelpRemark}(b), we get $K_h = F_{k_h} \setminus W = F_{k_h} \setminus A_{\al}$ and $|\mathbf{f}| \le \ell_{k_h,\al}+a-1.$ Also, note that 
    $(F_{k_h} \setminus A_{\al}) \cap B_{\al} = \emptyset.$ Therefore, $E = \{x_{j_1,f_1},\dots,x_{j_a,f_a}\}$ is constructible in 
    $\mathcal{H}[\mathcal{S},\bm x(\s);\al].$ Thus, again by Theorem ~\ref{Th-Constructible}, there exists an edge $E'$ in 
    $\mathcal{H}[\mathcal{S},\bm x(\s);\al]$ such that $E' \subset E.$ Let $E' = \{x_{j_1,f_1},\dots,x_{j_b,f_b}\}.$ Suppose that $b = 1.$ Since $f_1 \ge c'_p+1$, by using the fact $x_{j_1,c'_p+1} \in V((\h[\s,\bm x(\s);\al])^\circ)$ and Corollary ~\ref{C-Constructible}, we obtain that $\{x_{j_1,c'_p+1}\}$ is constructible in $\h[\s,\bm x(\s);\al].$ Now, it follows from Theorem ~\ref{Th-Constructible} that $\{x_{j_1,c'_p+1}\} \in \e(\h[\s,\bm x(\s);\al]).$ Therefore we get $x_{j_1,c'_p+1} \notin V((\h[\s,\bm x(\s);\al])^\circ)$, a contradiction. Thus, $b > 1.$ Since $\tilde{E'} \subset \tilde{E}$, as seen above 
    $\{x_{j_1,f'_1},\dots,x_{j_b,f'_b}\}$ is an edge in $\h''(\ell'_1,\dots,\ell'_{\sigma}).$ Therefore $b = a$, and hence 
    $E \in \e((\h[\s,\bm x(\s);\al])^\circ).$
\end{proof}

\begin{lem}\label{L-H1isVD}
    With the notation of Set-up ~\ref{SetupMain} and Remark ~\ref{R-HelpRemark},  let $\ell_k = \ell$ for all 
    $k \in [t]$ and $\h_1$ is the hypergraph with the edge set
    $\e(\h_1) = \{E \in \e((\h[\s,\bm x(\s);\al])^\circ): \tilde{E} \cap V = \emptyset\}.$ Then $\h_1$ is vertex decomposable.
\end{lem}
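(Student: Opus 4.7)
The plan is to realize $\h_1$ as a disjoint union of vertex-decomposable hypergraphs, one per skeleton simplex attached at a vertex of $W$, and then invoke the remark that a disjoint union of vertex decomposable hypergraphs is vertex decomposable.

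First, I would pin down which edges of $(\h[\s,\bm x(\s);\al])^\circ$ can appear in $\h_1$. By Lemma~\ref{L-ImConstructible}, every such edge has $\tilde E = F_k\setminus W$ for some facet $F_k$ of $\bar\Delta$. The condition $\tilde E\cap V=\emptyset$ rules out all facets $F_k$ of $\Delta$ (which satisfy $F_k\subset V$), and forces $F_k$ to be a facet of some skeleton complex $\Gamma_{i,j}^{(s_{i,j})}$ by Remark~\ref{R-MainRemark}(a). Thus each edge of $\h_1$ is canonically associated with a unique pair $(i,j)\in[m]\times[\et_i]$. Because the simplices $\Gamma_{i,j}$ meet only at $x_i$ and distinct $\Gamma(x_i)$ share no vertex, the base vertex sets $V(\Gamma_{i,j})\setminus\{x_i\}$ are pairwise disjoint; the corresponding shadow sets are therefore pairwise disjoint, and $\h_1$ splits as
\[
\h_1 = \bigsqcup_{i\in[m],\, j\in[\et_i]} \h_1^{(i,j)},
\]
where $\h_1^{(i,j)}$ consists of those edges coming from facets of $\Gamma_{i,j}^{(s_{i,j})}$.

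Next, I would describe each piece $\h_1^{(i,j)}$ concretely. By Lemma~\ref{L-CHSX} and the construction of $\bm x(\s)$, the processing of each vertex $x_i\in W$ takes exactly one of three forms: (a) a single contraction of $x_{i,1}$; (b) successive deletion of $x_{i,1},\dots,x_{i,\nu_i}$ for some $1\le\nu_i<\ell$ followed by contraction of $x_{i,\nu_i+1}$; or (c) deletion of all $\ell$ shadows $x_{i,1},\dots,x_{i,\ell}$ (as demanded by $V[\s;\al]=\emptyset$). Because an edge from $\Gamma_{i,j}^{(s_{i,j})}$ only involves shadows of $V(\Gamma_{i,j})$, and the operations corresponding to other $W$-vertices $x_{i'}$ do not touch these shadows, the piece $\h_1^{(i,j)}$ agrees, up to isolated vertices, with the hypergraph obtained by performing the relevant one of the three operations from Theorem~\ref{Th-GSkeleton} on $\h(\Gamma_{i,j}^{(s_{i,j})})(\ell)$. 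Hence $\h_1^{(i,j)}$ is vertex decomposable by Theorem~\ref{Th-GSkeleton}; in case (c), if $s_{i,j}=\dim\Gamma_{i,j}$ so that every facet of $\Gamma_{i,j}^{(s_{i,j})}$ contains $x_i$, then $\h_1^{(i,j)}$ is an isolated hypergraph and is trivially vertex decomposable.

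Finally, applying the noted fact that a disjoint union of vertex decomposable hypergraphs is vertex decomposable to the decomposition above yields the conclusion. The main obstacle I anticipate is in the identification step: one must check that contracting a vertex $x_{i,c}$ in $\h(\bar\Delta)(\ell)$ does not cause edges from facets outside $\Gamma_{i,j}^{(s_{i,j})}$ to spuriously delete edges belonging to $\h_1^{(i,j)}$ through the clause in the definition of $\e^\star$. This needs a careful check using the disjointness of the base vertex sets $V(\Gamma_{i,j})\setminus\{x_i\}$ and the fact that $s_{i,j}\ge 1$ forces all edges in $\h(\Gamma_{i,j}^{(s_{i,j})})(\ell)$ to have size at least two (ruling out containments of type $\{x_{i,1}\}\subset E$). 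Once this local-versus-global issue is settled, the translation of each of cases (a), (b), (c) into the language of Theorem~\ref{Th-GSkeleton} is direct, and the disjoint union argument finishes the proof.
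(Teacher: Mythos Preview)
Your proposal is correct and follows essentially the same route as the paper: decompose $\h_1$ as the disjoint union $\bigsqcup_{i,j}\overline{\h}_{ij}$ over the skeleton simplices $\Gamma_{i,j}^{(s_{i,j})}$, identify each piece via Lemma~\ref{L-CHSX} with one of the hypergraphs appearing in Theorem~\ref{Th-GSkeleton}, and conclude by the disjoint-union remark. The paper carries one extra formal case ($i\notin I$, i.e.\ $x_i$ never processed) and keeps track of the possibility $\mu_i>1$ before observing that the extra shadows are isolated; in the present setting with all $\ell_k=\ell$ these refinements are vacuous (Lemma~\ref{L-Edge} forces $\mu_i=1$ and $I=[m]$), so your streamlined case split (a)/(b)/(c) is justified.
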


\begin{proof}
    Let $\bm x = (x_{i_q,c_q})_{q = 1}^{\al(\s)}$ and $I = \{i_r \in [m]: r \in [\al(\s)]\}.$ For $i \in I$, let $\mu_i =  \min{C_i}$ and $\nu_i+1 = \max{C_i}$, where
    $C_i = \{c:x_{i,c} = x_{i_r,c_r} ~\mbox{for some}~ r \in [\al]\}.$
    In view of Lemma ~\ref{L-CHSX}, we can write
    $$\bm x(\s) = (x_{1,\mu_1},\dots,x_{1,\nu_1+1},\dots,x_{i,\mu_i},\dots,x_{i,\nu_i+1},\dots).$$
    If $i \in I$, then $x_{i,\mu_i} = x_{i_r,c_r}$ for some $r \in [\al].$ In that case, we set $r' = r+\nu_i-\mu_i+1.$ Then note that $x_{i,\nu_i+1} = x_{i_{r'},c_{r'}}.$
    
    Recall that, for each $i \in [m]$, $\Gamma(x_i) = \Gamma_{i,1}^{(s_{i,1})}\cup \cdots \cup\Gamma_{i,\et_i}^{(s_{i,\et_i})}.$ For $i \in [m]$ and $j \in [\et_i]$, let 
    $\h_{ij} = \h(\Gamma_{i,j}^{(s_{i,j})})$ and 
    \[ \overline{\h}_{ij} = \begin{cases} 
		\left(\h_{ij}(\ell) \setminus (x_{i,\mu_i},\dots,x_{i,\nu_i+1})\right)^\circ & ~\mbox{if}~ i \in I ~\mbox{and}~ \s_{r'} = D ; \\
		\left((\h_{ij}(\ell) \setminus (x_{i,\mu_i},\dots,x_{i,\nu_i}))/ x_{i,\nu_i+1}\right)^\circ & i \in I,~ r < r', ~\mbox{and}~ \s_{r'} = L;\\
        \left(\h_{ij}(\ell) / x_{i,\mu_i}\right)^\circ & ~\mbox{if}~ i \in I ~\mbox{and}~ \s_r = L;\\
        \left(\h_{ij}(\ell) \setminus (x_{i,1},\dots,x_{i,\ell})\right)^\circ & ~\mbox{if}~ i \notin I.
    \end{cases}
	\]
    If $i \in I$, then $x_{i,1},\dots,x_{i,\mu_i-1}$ are isolated vertices in $\h[\s,\bm x(\s);r-1]$ and $x_{i,\nu_i+2},\dots,x_{i,\ell}$ are isolated vertices in $\h[\s,\bm x(\s);r'].$ Thus, we have
     \[ \overline{\h}_{ij} = \begin{cases} 
		\left(\h_{ij}(\ell) \setminus (x_{i,1},\dots,x_{i,\ell})\right)^\circ & ~\mbox{if}~ i \in I ~\mbox{and}~ \s_{r'} = D ; \\
		\left((\h_{ij}(\ell) \setminus (x_{i,1},\dots,x_{i,\nu_i}))/ x_{i,\nu_i+1}\right)^\circ & i \in I,~ r < r', ~\mbox{and}~ \s_{r'} = L;\\
        \left((\h_{ij}(\ell) \setminus (x_{i,1},\dots,x_{i,\mu_i-1}))/ x_{i,\mu_i}\right)^\circ & ~\mbox{if}~ i \in I ~\mbox{and}~ \s_r = L;\\
        \left(\h_{ij}(\ell) \setminus (x_{i,1},\dots,x_{i,\ell})\right)^\circ & ~\mbox{if}~ i \notin I.
    \end{cases}
	\]
    It is easy to see that
    $\h_1 = \bigsqcup_{i = 1}^{m} \bigsqcup_{j = 1}^{\et_i} \overline{\h}_{ij}.$
    Now, it follows from Theorem ~\ref{Th-GSkeleton} that $\h_1$ is vertex decomposable.
\end{proof}
    
\begin{lem}\label{L-Alpha}
    With the notation of the Set-up ~\ref{SetupMain} and Remark ~\ref{R-HelpRemark}, let $\ell_k = \ell$ for all 
    $k \in [t].$ Then $(\h[\s,\bm x(\s);\al])^\circ$ is a vertex decomposable hypergraph.
\end{lem}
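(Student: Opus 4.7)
The plan is to exhibit $(\h[\s,\bm x(\s);\al])^\circ$ as a disjoint union of two vertex decomposable hypergraphs and to invoke the note that a disjoint union of vertex decomposable hypergraphs is vertex decomposable. Since $V[\s;\al]=\emptyset$ by the very definition of $\al(\s)$, every vertex $x_{j,f}$ of $(\h[\s,\bm x(\s);\al])^\circ$ satisfies $x_j\notin W$, and hence every edge $E$ of this hypergraph satisfies $\tilde{E}\cap W=\emptyset$. By Remark \ref{R-MainRemark}(a), each facet $F_k$ of $\bar{\Delta}$ either lies in $V$ or meets $V$ in at most a single vertex of $W$, so the edges of $(\h[\s,\bm x(\s);\al])^\circ$ split cleanly into two classes according to whether $\tilde{E}\subset V$ or $\tilde{E}\cap V=\emptyset$; write $\h_2$ and $\h_1$ for the corresponding sub-hypergraphs.

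After discarding isolated vertices, $\h_1$ and $\h_2$ are supported on disjoint vertex sets: edges of $\h_1$ use only the internal skeleton vertices $\{x_{n+1},\dots,x_{n_m}\}$ (the unique $W$-vertex attached to each skeleton having been removed), while edges of $\h_2$ use only vertices of $V\setminus W$. Consequently $(\h[\s,\bm x(\s);\al])^\circ=\h_1\sqcup \h_2$ up to isolated vertices. Lemma \ref{L-H1isVD} directly provides vertex decomposability of $\h_1$, so the remaining task is the analogous statement for $\h_2$.

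For $\h_2$ I would distinguish the two cases of Remark \ref{R-HelpRemark}. In case (a), Lemmas \ref{L-Tool} and \ref{L-TooLInverse} together show that the index shift $f_p\mapsto f_p-c'_p$ induces a hypergraph isomorphism between $\h_2$ and $(\h'[\p,\bm y;\rho])^\circ$, where $\h'=\h(\mathcal{T})$ and $\mathcal{T}$ was already shown to be a forest admitting a good leaf order that begins with the leaves $\{y_1,z_1\},\dots,\{y_\rho,z_\rho\}$. Similarly, in case (b), Lemma \ref{L-2ndTool} identifies $\h_2$ with $(\h''(\ell'_1,\dots,\ell'_\sigma))^\circ$, where $\h''=\h(\mathcal{K})$ and $\mathcal{K}$ is again a forest. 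In either case, the vertex decomposability of the resulting shadow construction is exactly the forest case of the result from \cite{BAR} alluded to immediately after Theorem \ref{Th-Constructible}; in case (a) one additionally applies Lemma \ref{L-VDtool} with the prefix string $L^\rho$ to absorb the initial contractions at the artificial leaf vertices $y_{1,1},\dots,y_{\rho,1}$.

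The main technical obstacle is the bookkeeping needed to invoke the forest case of \cite{BAR} cleanly. One must verify that the index shift of Lemmas \ref{L-Tool} and \ref{L-TooLInverse} preserves the $\circ$-structure, i.e.\ does not produce any new isolated vertices and does not collapse an edge to a singleton; this is precisely what is controlled by Corollary \ref{C-Constructible} together with the minimality built into the definition of each $c'_p$. Once this point is settled, the disjointness of supports gives the decomposition $(\h[\s,\bm x(\s);\al])^\circ=\h_1\sqcup \h_2$ and the note on disjoint unions completes the argument.
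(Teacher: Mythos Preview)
Your proposal is correct and follows essentially the same approach as the paper: split $(\h[\s,\bm x(\s);\al])^\circ$ as $\h_1\sqcup\h_2$, handle $\h_1$ via Lemma~\ref{L-H1isVD}, and in each of the two cases of Remark~\ref{R-HelpRemark} identify $\h_2$ with a forest-type construction via the index shift of Lemmas~\ref{L-Tool}, \ref{L-TooLInverse}, \ref{L-2ndTool}. The only discrepancy is bibliographic: the vertex decomposability of $(\h'[\p,\bm y;\rho])^\circ$ and of $\h''(\ell'_1,\dots,\ell'_\sigma)$ is not the content alluded to after Theorem~\ref{Th-Constructible} (that remark concerns constructibility for simplicial trees), but rather \cite[Lemma~3.20]{BAR}, which the paper invokes directly without the extra Lemma~\ref{L-VDtool} step you describe.
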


\begin{proof}
    Let $E \in \e((\h[\s,\bm x(\s);\al])^\circ).$ We prove that if $\tilde{E} \not\subset V$, then $\tilde{E} \cap V = \emptyset.$
    Using Lemma ~\ref{L-ImConstructible}, there exists $k \in [t]$ such that $\tilde{E} = F_k \setminus W.$ Therefore, $\tilde{E} \cap W = \emptyset.$ Since 
    $\tilde{E} \not\subset V$, it follows from Remark ~\ref{R-MainRemark}(a) that $\tilde{E} \cap V \subset F_k \cap V \subset W.$ Thus, we get 
    $\tilde{E} \cap V = \emptyset.$
    
    Now, we consider the following two cases.
    
    \noindent
    \textbf{Case 1.} When $\{X \in \mathcal{X}\setminus \mathcal{X}_{\max}:X\subset V\} \ne \emptyset.$ In this case, we have $\h' = \h(\mathcal{T}).$ Firstly, we prove that $$(\h[\s,\bm x(\s);\al])^\circ \simeq (\h'[\p,\bm y;\rho])^\circ \sqcup \h_1.$$ 
    Define $\phi:V((\h[\s,\bm x(\s);\al])^\circ) \to V((\h'[\p,\bm y;\rho])^\circ \sqcup \h_1)$ by 
    \[ \phi(x_{j_p,c}) = \begin{cases} 
		x_{j_p,c-c'_p} & ~\mbox{if}~ x_{j_p} \in V; \\
		x_{j_p,c} &  ~\mbox{if}~ x_{j_p} \notin V,
	\end{cases}
	\]
    where $c'_p =\min\{c:x_{j_p,c} \in V((\h[\s,\bm x(\s);\al])^\circ)\}-1.$
    As seen above, if $E \in \e((\h[\s,\bm x(\s);\al])^\circ)$, then either $\tilde{E} \subset V$, or $\tilde{E} \cap V = \emptyset.$ Therefore, $\phi$ is a well-defined map. In view of Lemmas ~\ref{L-Tool} and ~\ref{L-TooLInverse}, $\phi$ is an isomorphism.
    
    \noindent
    \textbf{Case 2.} When $\{X \in \mathcal{X}\setminus \mathcal{X}_{\max}:X\subset V\} = \emptyset.$ In this case, let $\h'' = \h(\mathcal{K}).$ We prove that $$(\h[\s,\bm x(\s);\al])^\circ \simeq \h''(\ell'_1,\dots,\ell'_{\tau}) \sqcup \h_1.$$ Define 
    $\psi:V((\mathcal{H}[\s,\bm x(\s);\al])^\circ) \to V(\overline{\mathcal{H}} \sqcup \h_1)$ given by 
    \[ \psi(x_{j_p,c}) = \begin{cases} 
		x_{j_p,c-c'_p} & ~\mbox{if}~ x_{j_p} \in V; \\
		x_{j_p,c} &  ~\mbox{if}~ x_{j_p} \notin V,
	\end{cases}
	\]
    where $\overline{\h} = \h''(\ell'_1,\dots,\ell'_{\tau})$ and $c'_p = \min\{c:x_{j_p,c} \in V((\mathcal{H}[\s,\bm x(\s);\al])^\circ)\}-1.$ 
    If $E \in \e((\h[\s,\bm x(\s);\al])^\circ)$, then either $\tilde{E} \subset V$, or $\tilde{E} \cap V = \emptyset.$ Therefore, $\psi$ is a well-defined map.   
    In view of Lemma ~\ref{L-2ndTool} $\psi$ is an isomorphism.
    
    In both cases, it follows from Lemma ~\ref{L-H1isVD} and \cite[Lemma 3.20]{BAR} that $\h[\s,\bm x(\s);\al]$ is a vertex decomposable hypergraph.
\end{proof}

\begin{lem}\label{L-MainLemma}
    With notation of Set-up ~\ref{SetupMain}, let $\ell_k = \ell$ for all 
    $k \in [t].$ Then $\h[\s,\bm x(\s);r]$ is a vertex decomposable hypergraph for all $r \ge 0$. In particular, $\h(\ell)$ is a vertex decomposable hypergraph.
\end{lem}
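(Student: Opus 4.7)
The plan is to apply Lemma \ref{L-VDtool} with $p = 0$ and the constant string $\p$, so that $\Cla(\p,0) = \Cla$. Since $\h(\ell) = \h[\s,\bm x(\s);0]$ for every $\s$, showing that each $\h[\s,\bm x(\s);r]$ is vertex decomposable for all $r \ge 0$ will in particular yield vertex decomposability of $\h(\ell)$. So the whole proof reduces to verifying the four hypotheses (1)--(4) of Lemma \ref{L-VDtool} for the family of sequences $\{\bm x(\s)\}_{\s \in \Cla}$ constructed in Set-up \ref{SetupMain}.

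First I would observe that hypotheses (1) and (2) are essentially already in hand. Condition (1), namely $\al(\s) \ge 0$, is trivial from the definition. Condition (2), that $\h[\s,\bm x(\s);\al(\s)]$ is vertex decomposable, is exactly the content of Lemma \ref{L-Alpha} (using the hypothesis $\ell_k = \ell$ for all $k \in [t]$, which means $(\ell_1,\dots,\ell_t)$ trivially satisfies condition $(\star)$ of Set-up \ref{SetupMain}).

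Next, I would verify the compatibility condition (3). Suppose $\al(\s) > 0$ and $0 \le p' < \al(\s)$, and let $\s' \in \Cla(\s,p')$. This is precisely the situation addressed in Remark \ref{R-MainRemark}(d): the construction of $\bm x(\s',p)$ depends only on $\s_1,\dots,\s_p$, so $\bm x(\s',p) = \bm x(\s,p)$ for all $p \le p'+1$, which gives $\bm x(\s')_r = \bm x(\s)_r$ for $r \in [p'+1]$. The strict inequality $p' < \al(\s')$ follows because if instead $\al(\s') \le p'$, then the two hypergraphs $\h[\s',\bm x(\s',\al(\s'));\al(\s')]$ and $\h[\s,\bm x(\s,\al(\s'));\al(\s')]$ coincide, forcing $V[\s;\al(\s')] = V[\s';\al(\s')] = \emptyset$ and contradicting the minimality of $\al(\s) > p' \ge \al(\s')$.

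Finally, condition (4) requires that $\bm x(\s)_r$ be a shedding vertex of $(\h[\s,\bm x(\s);r-1])^\circ$ for every $r \in [\al(\s)]$, and this is exactly Lemma \ref{L-SheddingV}. With all four hypotheses of Lemma \ref{L-VDtool} checked, the conclusion is that $\h[\s,\bm x(\s);r]$ is a vertex decomposable hypergraph for every $r \ge 0$ and every $\s \in \Cla$. Taking $r = 0$ gives $\h(\ell) = \h[\s,\bm x(\s);0]$ is vertex decomposable. The main conceptual difficulty was entirely concentrated in the earlier ingredient Lemma \ref{L-Alpha} (where the simplicial-tree structure of $\bar{\Delta}\setminus W$ allowed a decomposition into a $\h'[\p,\bm y;\rho]$-like part plus a disjoint union of skeleton hypergraphs handled by Theorem \ref{Th-GSkeleton}); at the present stage the proof is just a bookkeeping assembly of the previous lemmas via Lemma \ref{L-VDtool}.
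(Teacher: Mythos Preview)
Your proposal is correct and follows essentially the same approach as the paper: both proofs invoke Lemma~\ref{L-VDtool} with $p=0$, citing Remark~\ref{R-MainRemark}(d) for hypothesis (3), Lemma~\ref{L-SheddingV} for hypothesis (4), and Lemma~\ref{L-Alpha} for hypothesis (2). You have simply spelled out in more detail what the paper compresses into a single sentence.
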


\begin{proof}
    For simplicity, we write $\al = \al(\s)$. Consider the sequence $\bm x(\s)$ obtained in Set-up ~\ref{SetupMain}. In view of  Remark ~\ref{R-MainRemark}(d) and Lemmas ~\ref{L-SheddingV},~\ref{L-Alpha}, $\bm x(\s)$ satisfies the hypothesis of Lemma ~\ref{L-VDtool} for $p = 0.$ Therefore, we obtain the desired result.
\end{proof}

The following example shows that we can't remove condition $(\star)$ on the tuple $(\ell_1,\dots,\ell_t) \in \mathbb{N}^t_{>0}.$ 

\begin{exmp}\label{ESC}
    Let $\Delta$ be a simplicial complex on the vertex set $V = \{x_1,\dots,x_8\}$ with facets 
    $F_1 = \{x_1,x_2,x_3\},F_2 = \{x_3,x_4,x_5\},F_3 = \{x_5,x_6,x_7\},F_4 = \{x_1,x_7,x_8\}$ as shown in Figure ~\ref{Fig-Star1}. Then $W = \{x_1\}$ is a cycle cover of $\Delta.$ Let $\bar{\Delta}$ be a simplicial complex obtained from $\Delta$ by attaching non-pure skeleton complex 
    $\Gamma(x_1) = \langle F_5 = \{x_1,x_9,x_{10}\} \rangle$ as shown in Figure ~\ref{Fig-Star2} and $\h = \h(\bar{\Delta}).$ Since $x_{1,1}$ is the only shedding vertex of $\h(2,1,1,2,1)$ and $\h(2,1,1,,2,1)\setminus x_{1,1}$ has no shedding vertex, it follows that the hypergraph $\h(2,1,1,2,1)$ is not vertex decomposable. 

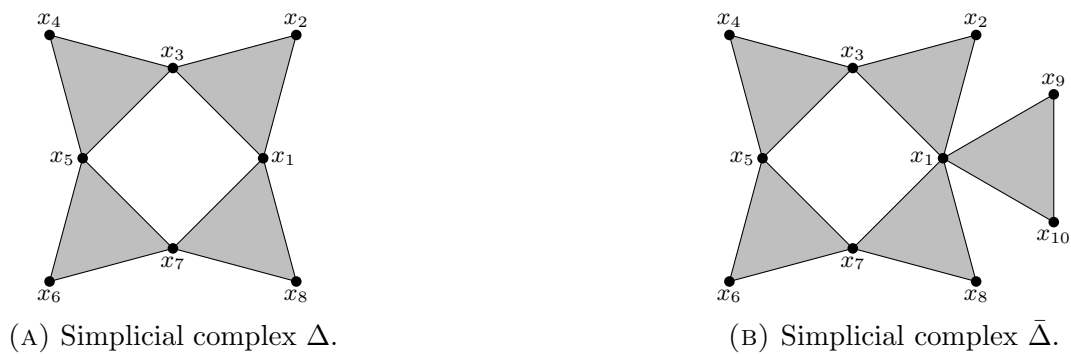
\begin{figure}[H]
    \begin{subfigure}[b]{0.265\textwidth}
    \centering
        \begin{tikzpicture}[node distance=1cm, auto, every node/.style = {inner sep=1pt,outer sep=2pt}, vrtc/.style = {inner sep=3pt}]
		  \fill[lightgray] (1.2,0) -- (1.64,1.64) -- (0,1.2) -- cycle;
		  \fill[lightgray] (-1.2,0) -- (-1.64,1.64) -- (0,1.2) -- cycle;
            \fill[lightgray] (-1.2,0) -- (-1.64,-1.64) -- (0,-1.2) -- cycle;
		  \fill[lightgray] (1.2,0) -- (1.64,-1.64) -- (0,-1.2) -- cycle;
		  \draw (1.2,0) -- (1.64,1.64);
            \draw (1.2,0) -- (0,1.2);
            \draw (1.64,1.64) -- (0,1.2);
		  \draw (-1.2,0) -- (-1.64,1.64);
		  \draw (-1.2,0) -- (0,1.2);
		  \draw (-1.64,1.64) -- (0,1.2);
		  \draw (-1.2,0) -- (-1.64,-1.64);
		  \draw (-1.2,0) -- (0,-1.2);
		  \draw (-1.64,-1.64) -- (0,-1.2);
            \draw (1.2,0) -- (1.64,-1.64);
            \draw (1.2,0) -- (0,-1.2);
            \draw (1.64,-1.64) -- (0,-1.2);
            
		  \begin{scriptsize}
		      \fill  (1.2,0) circle (2.0pt);
		      \draw[right] (1.2,0) node {$x_1$};
		      \fill  (1.64,1.64) circle (2.0pt);
		      \draw[above] (1.64,1.64) node {$x_2$};
		      \fill  (0,1.2) circle (2.0pt);
		      \draw[above] (0,1.2) node {$x_3$};
		      \fill  (-1.64,1.64) circle (2.0pt);
		      \draw[above] (-1.64,1.64) node {$x_4$};
		      \fill  (-1.2,0) circle (2.0pt);
		      \draw[left] (-1.2,0) node {$x_5$};
		      \fill  (-1.64,-1.64) circle (2.0pt);
		      \draw[below] (-1.64,-1.64) node {$x_6$};
                \fill  (0,-1.2) circle (2.0pt);
		      \draw[below] (0,-1.2) node {$x_7$};
                \fill  (1.64,-1.64) circle (2.0pt);
		      \draw[below] (1.64,-1.64) node {$x_8$};
		  \end{scriptsize}
        \end{tikzpicture}
        \caption{Simplicial complex $\Delta$.}
        \label{Fig-Star1}
    \end{subfigure}
    \hspace{4.5cm}
    \begin{subfigure}[b]{0.315\textwidth}
    \centering
    \begin{tikzpicture}[node distance=1cm, auto, every node/.style = {inner sep=1pt,outer sep=2pt}, vrtc/.style = {inner sep=3pt}]
		\fill[lightgray] (1.2,0) -- (1.64,1.64) -- (0,1.2) -- cycle;
	  \fill[lightgray] (-1.2,0) -- (-1.64,1.64) -- (0,1.2) -- cycle;
   	\fill[lightgray] (-1.2,0) -- (-1.64,-1.64) -- (0,-1.2) -- cycle;
	  \fill[lightgray] (1.2,0) -- (1.64,-1.64) -- (0,-1.2) -- cycle;
        \fill[lightgray] (1.2,0) -- (2.672,0.85) -- (2.672,-0.85) -- cycle;
	  \draw (1.2,0) -- (1.64,1.64);
		\draw (1.2,0) -- (0,1.2);
        \draw (1.64,1.64) -- (0,1.2);
        \draw (-1.2,0) -- (-1.64,1.64);
	  \draw (-1.2,0) -- (0,1.2);
		\draw (-1.64,1.64) -- (0,1.2);
	  \draw (-1.2,0) -- (-1.64,-1.64);
		\draw (-1.2,0) -- (0,-1.2);
		\draw (-1.64,-1.64) -- (0,-1.2);
        \draw (1.2,0) -- (1.64,-1.64);
        \draw (1.2,0) -- (0,-1.2);
        \draw (1.64,-1.64) -- (0,-1.2);
        \draw (1.2,0) -- (2.672,0.85);
        \draw (1.2,0) -- (2.672,-0.85);
        \draw (2.672,0.85) -- (2.672,-0.85);
            
		\begin{scriptsize}
		  \fill  (1.2,0) circle (2.0pt);
		  \draw[left] (1.2,0) node {$x_1$};
		  \fill  (1.64,1.64) circle (2.0pt);
		  \draw[above] (1.64,1.64) node {$x_2$};
	   	\fill  (0,1.2) circle (2.0pt);
		  \draw[above] (0,1.2) node {$x_3$};
		  \fill  (-1.64,1.64) circle (2.0pt);
		  \draw[above] (-1.64,1.64) node {$x_4$};
		  \fill  (-1.2,0) circle (2.0pt);
		  \draw[left] (-1.2,0) node {$x_5$};
		  \fill  (-1.64,-1.64) circle (2.0pt);
		  \draw[below] (-1.64,-1.64) node {$x_6$};
            \fill  (0,-1.2) circle (2.0pt);
		  \draw[below] (0,-1.2) node {$x_7$};
            \fill  (1.64,-1.64) circle (2.0pt);
		  \draw[below] (1.64,-1.64) node {$x_8$};
            \fill  (2.672,0.85) circle (2.0pt);
		  \draw[above] (2.672,0.85) node {$x_9$};
            \fill  (2.672,-0.85) circle (2.0pt);
		  \draw[below] (2.672,-0.85) node {$x_{10}$};
		\end{scriptsize}
    \end{tikzpicture}
    \caption{Simplicial complex $\bar{\Delta}$.}
    \label{Fig-Star2}
    \end{subfigure}
    \caption{$\h(\bar{\Delta})(2,1,1,2,1)$ is not vertex decomposable.}
    \label{Fig-Star}
\end{figure}
\end{exmp}

Let $\Delta$ be the simplicial complex as in Example ~\ref{ESC} and $W = \{x_2,x_4,x_6,x_8\}$ be a vertex cover of $\Delta$. Let $\bar{\Delta}$ be the simplicial complex obtained from $\Delta$ by attaching $1$-dimensional simplices at each vertex in $W$. Then it is easy to verify that $\h(\bar{\Delta})(1,1,1,1,1,1,1,1)$ is not a vertex decomposable hypergraph. Thus Lemma ~\ref{L-MainLemma} does not hold if we replace a cycle cover with a vertex cover.

The following theorem is the main result of the paper.

\begin{thm}\label{Th-MainResult}
    Let $\Delta$ be a simplicial complex and $W$ be a cycle cover of $\Delta$. Let $\bar{\Delta}$ be the simplicial complex obtained from $\Delta$ by attaching non-pure skeletons at all vertices of $W$. Then $J(\bar{\Delta})^{(\ell)}$ has linear quotients, and hence it is componentwise linear.
\end{thm}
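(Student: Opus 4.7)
The plan is simply to combine the two major results established earlier in the paper. By Lemma \ref{L-MainLemma}, applied with $\ell_k = \ell$ for all $k \in [t]$ (which trivially satisfies condition $(\star)$ since all $\ell_k$ are equal), we know that the hypergraph $\h(\bar{\Delta})(\ell)$ is vertex decomposable for every $\ell \in \mathbb{N}_{>0}$. This is precisely the input needed for Lemma \ref{L-VDmeansCL}.

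Therefore, applying Lemma \ref{L-VDmeansCL} to $\h = \h(\bar{\Delta})$ immediately gives that $J(\bar{\Delta})^{(\ell)} = J(\h(\bar{\Delta}))^{(\ell)}$ has linear quotients, and consequently (by \cite[Theorem 8.2.15]{HBook}, as invoked in the proof of Lemma \ref{L-VDmeansCL}) is componentwise linear. The case $\ell = 1$ is included in $\mathbb{N}_{>0}$ so no separate treatment is needed; the symbolic power reduces to $J(\bar{\Delta})$ itself in that case and the conclusion still follows.

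Since essentially all of the technical work has been absorbed into Lemma \ref{L-MainLemma}, the proof of the theorem itself is expected to be a short two-line deduction. There is no real obstacle at this stage: the hard work (constructing the sequence $\bm x(\s)$, verifying that each $\bm x(\s)_r$ is a shedding vertex via Lemma \ref{L-SheddingV}, handling the base case $\h[\s,\bm x(\s);\al]$ via Lemma \ref{L-Alpha} using the forest structure of $\Delta \setminus W$, and finally invoking Lemma \ref{L-VDtool}) has already been completed in the preceding sequence of lemmas. The only point worth emphasizing is why condition $(\star)$ is automatically satisfied in the statement of Theorem \ref{Th-MainResult}: the theorem does not mention duplication parameters at all, so one works with the uniform tuple $(\ell,\dots,\ell)$, for which $\ell_{s+i} = \ell \geq \ell_k = \ell$ for all $k \in \Upsilon_i$ holds trivially, justifying the use of Lemma \ref{L-MainLemma}.
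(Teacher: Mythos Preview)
Your proposal is correct and matches the paper's own proof exactly: the paper simply writes that the result follows directly from Lemmas~\ref{L-VDmeansCL} and~\ref{L-MainLemma}. Your additional remark that the uniform tuple $(\ell,\dots,\ell)$ trivially satisfies condition~$(\star)$ is a helpful clarification but not strictly needed, since Lemma~\ref{L-MainLemma} already assumes $\ell_k = \ell$ for all $k$.
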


\begin{proof}
    It follows directly from Lemmas ~\ref{L-VDmeansCL} and ~\ref{L-MainLemma}.
\end{proof}

The following result is an immediate consequence of the Theorem ~\ref{Th-MainResult}.

\begin{corollary}\label{reg} 
    Let $\bar{\Delta}$ be the simplicial complex as in Theorem ~\ref{Th-MainResult}. If $J(\bar{\Delta})^{(\ell)}=J(\bar{\Delta})^{\ell}$ for all $\ell \geq 1$, then $\reg(J(\bar{\Delta})^{(\ell)} = \ell \deg(J(\bar{\Delta})),$  where $\deg(J(\bar{\Delta}))$ denotes the maximum degree of minimal monomial generators of $J(\bar{\Delta}).$ 
\end{corollary}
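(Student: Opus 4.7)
The plan is to derive the regularity formula from the componentwise linearity already established in Theorem~\ref{Th-MainResult}, together with a direct count of the maximum degree of a minimal generator of $J(\bar{\Delta})^\ell$. First I would invoke Theorem~\ref{Th-MainResult} to conclude that, for every $\ell \ge 1$, the ideal $J(\bar{\Delta})^{(\ell)}$ has linear quotients and is in particular componentwise linear. A standard consequence (see, e.g., \cite[Proposition 8.2.9]{HBook}) of componentwise linearity is that $\reg(I)$ coincides with the largest degree $d$ in which $I$ admits a minimal generator: for if $I_{<d>}$ is the top nonzero strand, then $I_{<d>}$ has linear resolution with $\reg(I_{<d>}) = d$, and the short exact sequences relating $I_{<j>}$ to $I_{<j-1>}$ for the lower strands force $\reg(I) = d$.

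With this reduction in place, the assumption $J(\bar{\Delta})^{(\ell)} = J(\bar{\Delta})^\ell$ means I only have to compute the maximum degree of a minimal monomial generator of $J(\bar{\Delta})^\ell$. The upper bound is clear: every minimal generator of $J(\bar{\Delta})^\ell$ divides a product $u_1 \cdots u_\ell$ of minimal generators of $J(\bar{\Delta})$, and such a product has degree at most $\ell \deg(J(\bar{\Delta}))$. For the matching lower bound I would fix a minimal generator $u = \prod_{j \in S} x_j$ of $J(\bar{\Delta})$ with $\deg(u) = \deg(J(\bar{\Delta}))$, where $S$ is the corresponding minimal vertex cover of $\bar{\Delta}$, and check that $u^\ell$ is itself a minimal generator of $J(\bar{\Delta})^\ell$. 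If some product $v_1 \cdots v_\ell$ of minimal generators of $J(\bar{\Delta})$ divides $u^\ell$, with $v_i = \prod_{j \in S_i} x_j$, then for each $j \notin S$ the variable $x_j$ does not appear in $u^\ell$, so $j \notin S_i$ for all $i$; hence each $S_i \subseteq S$, and minimality of $S$ forces $S_i = S$ and $v_i = u$.

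Putting the two steps together gives $\reg(J(\bar{\Delta})^{(\ell)}) = \ell \deg(J(\bar{\Delta}))$. The only mildly delicate ingredient is the minimality of $u^\ell$ in $J(\bar{\Delta})^\ell$, since $J(\bar{\Delta})$ and its powers need not be equigenerated; however, this follows cleanly from the squarefree nature of cover ideals and the minimality of vertex covers, so I anticipate no substantive obstacle.
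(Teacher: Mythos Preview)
Your proposal is correct and follows exactly the route the paper has in mind: the paper states the corollary as an ``immediate consequence'' of Theorem~\ref{Th-MainResult} without giving any proof, and your argument spells out the standard details behind that claim. One small simplification: since Theorem~\ref{Th-MainResult} gives linear quotients (not just componentwise linearity), you can cite directly that an ideal with linear quotients has regularity equal to the maximum degree of a minimal generator, bypassing the strand-by-strand discussion.
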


Now Corollary ~\ref{reg} and Theorem ~\ref{Th-MainResult}, we obtain the following result.

\begin{thm}\label{TH} 
    Let $\bar{\Delta}$ be the simplicial complex as in Theorem ~\ref{Th-MainResult}. If $J(\bar{\Delta})^{(\ell)} = J(\bar{\Delta})^{\ell}$ for all $\ell \geq 1$, then the following are equivalent.
\begin{enumerate}[(a)]
    \item $J(\bar{\Delta})$ has a linear resolution.
    \item $J(\bar{\Delta})^{\ell}$ has a linear resolution for some $\ell \geq 1$.
    \item $J(\bar{\Delta})^{\ell}$ has a linear resolution for all $\ell \geq 1$.
    \item $R/I(\bar{\Delta})$ is Cohen-Macaulay.
    \item $\bar{\Delta}$ is unmixed.
\end{enumerate}
\end{thm}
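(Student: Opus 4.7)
The plan is to establish $(a)\Leftrightarrow(e)\Leftrightarrow(d)$ first, and then close the cycle $(a)\Rightarrow(c)\Rightarrow(b)\Rightarrow(a)$, leveraging Theorem~\ref{Th-MainResult} (componentwise linearity of $J(\bar{\Delta})^{(\ell)}$ for every $\ell$) and Corollary~\ref{reg} (the identity $\reg(J(\bar{\Delta})^{(\ell)}) = \ell \deg(J(\bar{\Delta}))$).

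For $(a) \Leftrightarrow (e)$: Theorem~\ref{Th-MainResult} with $\ell = 1$ gives that $J(\bar{\Delta})$ is componentwise linear. Unmixedness of $\bar{\Delta}$ is equivalent to all minimal generators of $J(\bar{\Delta})$ having a common degree, and a componentwise linear ideal generated in a single degree $d$ satisfies $I = I_{\langle d \rangle}$, hence has a linear resolution; conversely, any ideal with a linear resolution is generated in a single degree. For $(d)\Leftrightarrow(e)$: the Herzog-Hibi criterion converts the componentwise linearity of $J(\bar{\Delta}) = I(\bar{\Delta})^{\vee}$ into the sequential Cohen-Macaulayness of $R/I(\bar{\Delta})$. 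A sequentially Cohen-Macaulay module is Cohen-Macaulay precisely when it is unmixed, because the filtration in the definition of sequential Cohen-Macaulayness must collapse to a single step once all associated primes share the same dimension.

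The implication $(c)\Rightarrow(b)$ is trivial. For $(a)\Rightarrow(c)$, a linear resolution of $J(\bar{\Delta})$ forces all its minimal generators to share the common degree $d = \deg(J(\bar{\Delta}))$, so $J(\bar{\Delta})^{\ell}$ is generated in the single degree $\ell d$; combined with the componentwise linearity of $J(\bar{\Delta})^{\ell} = J(\bar{\Delta})^{(\ell)}$ from Theorem~\ref{Th-MainResult}, this yields a linear resolution. For $(b)\Rightarrow(a)$, assume $J(\bar{\Delta})^{\ell}$ has a linear resolution for some $\ell \ge 1$; then all its minimal generators share a common degree which, by Corollary~\ref{reg} together with the hypothesis $J(\bar{\Delta})^{(\ell)} = J(\bar{\Delta})^{\ell}$, equals $\ell d$. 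If $u$ is a minimal generator of $J(\bar{\Delta})$ of minimum degree $d_{\min}$, then $u^{\ell}$ is a minimal generator of $J(\bar{\Delta})^{\ell}$ of degree $\ell d_{\min}$, since no monomial of strictly smaller degree lies in $J(\bar{\Delta})^{\ell}$. Hence $\ell d_{\min} = \ell d$, forcing $J(\bar{\Delta})$ to be generated in a single degree, so $\bar{\Delta}$ is unmixed and $(a)$ follows from $(e)\Rightarrow(a)$.

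The only delicate point is the pull-back in $(b)\Rightarrow(a)$, where single-degree generation of the high power $J(\bar{\Delta})^{\ell}$ must be propagated back to $J(\bar{\Delta})$ itself; Corollary~\ref{reg} supplies the numerical identification $\reg(J(\bar{\Delta})^{\ell}) = \ell d$, and the elementary observation that the minimum generator degree of $J^{\ell}$ equals $\ell$ times the minimum generator degree of $J$ closes the gap. The remainder of the argument is essentially assembly of the Herzog-Hibi duality and standard facts about componentwise linear ideals.
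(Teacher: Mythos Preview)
Your proof is correct and follows the same approach the paper indicates: the paper itself gives no detailed argument, merely pointing to Corollary~\ref{reg} and Theorem~\ref{Th-MainResult}, and you have filled in exactly the standard deductions from those two inputs (componentwise linearity plus single-degree generation gives linear resolution; Herzog--Hibi duality plus the fact that sequentially Cohen--Macaulay and unmixed together force Cohen--Macaulay; and the regularity identity to pull single-degree generation back from $J(\bar{\Delta})^{\ell}$ to $J(\bar{\Delta})$). One could shortcut $(a)\Leftrightarrow(d)$ directly via the Eagon--Reiner theorem rather than routing through $(e)$, but this is a cosmetic difference, not a substantive one.
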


Let $\Delta$ be a simplicial complex and $W$ be a cycle over of $\Delta.$ Further, let $\bar{\Delta}$ be a simplicial complex obtained from $\Delta$ by attaching a pure skeleton complex at each vertex of $W.$ Then the following example shows that $\h(\bar{\Delta})(\ell)$ need not be vertex decomposable for all 
$\ell \ge 1.$

\begin{exmp}
    Consider the simplicial complex $\Delta = C_4$ as shown in Figure ~\ref{Fig-Pure1}. Clearly, $W = \{x_1\}$ is a cycle cover of $\Delta.$ Let 
    $\bar{\Delta} = \Delta \cup \Gamma^{(2)}$, where $\Gamma$ is the simplex on the vertex set $\{x_1,x_5,x_6,x_7\}.$ 
    \begin{figure}[H]
 	\begin{subfigure}[b]{0.26\textwidth}
 	\centering
 	  \resizebox{\linewidth}{!}{
 		\begin{tikzpicture}[node distance=1cm, auto, every node/.style = {inner sep=1pt,outer sep=2pt}, vrtc/.style = {inner sep=3pt}]
 			\draw (0,0)-- (-1.12,1.12);
 				\draw (-1.12,1.12)-- (-2.24,0);
 				\draw (-2.24,0)-- (-1.12,-1.12);
 				\draw (-1.12,-1.12)-- (0,0);
 					
 				\begin{scriptsize}
						\fill  (0,0) circle (2.0pt);
 					\draw[right] (0,0) node {$x_1$};
 					\fill  (-1.12,1.12) circle (2.0pt);
 					\draw[above] (-1.12,1.12) node {$x_2$};
 					\fill  (-2.24,0) circle (2.0pt);
 					\draw[left] (-2.24,0) node {$x_3$};
 					\fill  (-1.12,-1.12) circle (2.0pt);
						\draw[below] (-1.12,-1.12) node {$x_4$};
 				\end{scriptsize}
 			\end{tikzpicture}
 			}
 			\caption{Simplicial complex $\Delta.$}
                \label{Fig-Pure1}
    \end{subfigure}
    \hspace{0.2\textwidth}
    \begin{subfigure}[b]{0.34\textwidth}
 	\centering
 	\resizebox{\linewidth}{!}{
 		\begin{tikzpicture}[node distance=1cm, auto, every node/.style = {inner sep=1pt,outer sep=2pt}, vrtc/.style = {inner sep=3pt}]
 			\fill[lightgray] (0,0) -- (1.7,1.12) -- (1.7,-1.12) -- cycle;
 				\draw (0,0)-- (-1.12,1.12);
                    \draw (-1.12,1.12)-- (-2.24,0);
 				\draw (-2.24,0)-- (-1.12,-1.12);
 				\draw (-1.12,-1.12)-- (0,0);
 				\draw (0,0) -- (1.7,1.12);
 				\draw (1.7,1.12) -- (1.7,-1.12);
                    \draw (0,0) -- (1.7,-1.12);
 				\draw (1.7,1.12) -- (1,0);
			 	\draw (1.7,-1.12) -- (1,0);
	   			\draw (0,0) -- (1,0);
 						
 				\begin{scriptsize}
 		   		\fill  (0,0) circle (2.0pt);
 					\draw[left] (0,0) node {$x_1$};
 					\fill  (-1.12,1.12) circle (2.0pt);
 					\draw[above] (-1.12,1.12) node {$x_2$};
 					\fill  (-2.24,0) circle (2.0pt);
 					\draw[left] (-2.24,0) node {$x_3$};
 					\fill  (-1.12,-1.12) circle (2.0pt);
						\draw[below] (-1.12,-1.12) node {$x_4$}; 						
                        \fill  (1.7,-1.12) circle (2.0pt);
 					\draw[below] (1.7,-1.12) node {$x_5$};
 					\fill  (1.7,1.12) circle (2.0pt);
 					\draw[above] (1.7,1.12) node {$x_6$};
 					\fill  (1,0) circle (2.0pt);
 					\draw[right] (1,0) node {$x_7$};
 				\end{scriptsize}
 	      \end{tikzpicture}
 		}
 		\caption{Simplicial complex $\bar{\Delta}.$}
 		\end{subfigure}
 		\caption{$\h(\bar{\Delta})(2)$ is not vertex decomposable.}
            \label{Fig-Pure2}
 		\label{Fig-Pure}
    \end{figure}
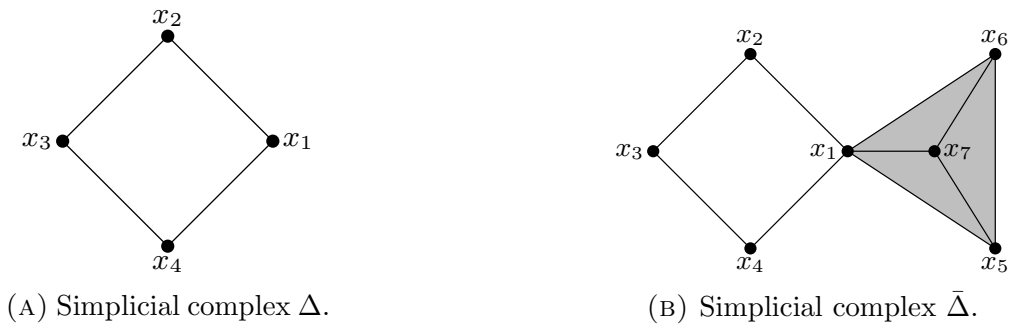
    A computation on Macaulay2, shows that the hypergraph $\h(\bar{\Delta})(2)$ is not vertex decomposable.
\end{exmp}
 
\noindent
{\bf Acknowledgements.}
The second author acknowledges the financial support from the seed grant received from Indian Institute of Technology, Jammu.

\renewcommand{\bibname}{References}

\renewcommand{\bibname}{References}
\bibliographystyle{plain}  

\bibliography{refs_reg}

\end{document}